\newtheorem{theorem}{Theorem}[section]
\newtheorem{lemma}[theorem]{Lemma}
\newtheorem{cor}[theorem]{Corollary}
\theoremstyle{definition}
\newtheorem{definition}[theorem]{Definition}
\newtheorem{example}[theorem]{Example}
\newtheorem{remark}[theorem]{Remark}
\newtheorem{question}[theorem]{Question}
\renewcommand{\subset}{\subseteq}
\renewcommand{\epsilon}{\varepsilon}
\newcommand{\abs}[1]{\left|#1\right|}                   
\newcommand{\absf}[1]{|#1|}                             
\newcommand{\E}{\mathbb{E}}
\renewcommand{\P}{\mathbb{P}}
\newcommand{\R}{\mathbb{R}}
\newcommand{\embolden}[1]{\textbf {#1}}
\begin{document}
\title{Independent Sets of Random Trees\\ and of Sparse Random Graphs}

\author{Steven Heilman}
\address{Department of Mathematics, University of Southern California, Los Angeles, CA 90089-2532}
\email{stevenmheilman@gmail.com}
\date{\today}
\thanks{Supported by NSF Grants DMS 1839406 and CCF 1911216.}
\subjclass[2010]{05C80,60C05,05C69,60E15}
\keywords{independent set, random tree, random graph, concentration of measure}  

\begin{abstract}
An independent set of size $k$ in a finite undirected graph $G$ is a set of $k$ vertices of the graph, no two of which are connected by an edge.  Let $x_{k}(G)$ be the number of independent sets of size $k$ in the graph $G$ and let $\alpha(G)=\max\{k\geq0\colon x_{k}(G)\neq0\}$.  In 1987, Alavi, Malde, Schwenk and Erd\"{o}s asked if the independent set sequence $x_{0}(G),x_{1}(G),\ldots,x_{\alpha(G)}(G)$ of a tree is unimodal (the sequence goes up and then down).  This problem is still open.  In 2006, Levit and Mandrescu showed that the last third of the independent set sequence of a tree is decreasing.  We show that the first 46.8\% of the independent set sequence of a random tree is increasing with (exponentially) high probability as the number of vertices goes to infinity.  So, the question of Alavi, Malde, Schwenk and Erd\"{o}s is ``four-fifths true'', with high probability.

We also show unimodality of the independent set sequence of Erd\"{o}s-Renyi random graphs, when the expected degree of a single vertex is large (with (exponentially) high probability as the number of vertices in the graph goes to infinity, except for a small region near the mode).  A weaker result is shown for random regular graphs.

The structure of independent sets of size $k$ as $k$ varies is of interest in probability, statistical physics, combinatorics, and computer science.
\end{abstract}
\maketitle

\setcounter{tocdepth}{1}
\tableofcontents
%
%
%

\section{Introduction}\label{secintro}

Let $G$ be a finite, undirected graph with no self-loops and no multiple edges, on $n\geq1$ labelled vertices $V\colonequals\{1,\ldots,n\}$, with edges $E\subset\{\{i,j\}\colon i,j\in V,\,i\neq j\}$.  Let $0\leq k\leq n$.  An \textbf{independent set} of size $k$ in $G$ is a subset of vertices no two of which are connected by an edge.  Let $x_{k}=x_{k}(G)$ denote the number of independent sets in size $k$ in $G$.  (Note that $x_{0}(G)=1$ since we consider the empty set to be a subset of $V=\{1,\ldots,n\}$.)  We refer to the sequence $x_{0}(G),\ldots,x_{n}(G)$ as the \textbf{independent set sequence} of $G$.

The general question considered in this paper is:  What does the sequences $x_{0},x_{1},\ldots,x_{n}$ ``look like'' for random graphs?

Some motivations for this question include:
\begin{itemize}
\item Statistical physics, where an independent set represents molecules in a magnet that do not want to be close to each other;
\item Computer Science and Probability, where many combinatorial optimization problems ($k$-SAT, MAX-Independent Set, etc.) exhibit similar interesting behavior for random instances. See for example the phase transition known as ``shattering'' discussed in e.g. \cite{coja15} and \cite{ding16}.
\item Combinatorics, where one would like to exactly or approximately find the numbers $x_{0},\ldots,x_{n}$ for both deterministic and random graphs.
\end{itemize}

We note that a classic NP-complete problem is:  For any $k\geq1$ and any graph $G$, decide whether or not $x_{k}(G)>0$.  So, it could be hard for a computer to decide whether or not a large graph has a large independent set.  Counting the \textit{number} of independent sets of a given size is then computationally more difficult.  In fact, a remarkable result of \cite{jerrum04} implies a computational equivalence between approximately counting combinatorial quantities (such as independent sets), and constructing a stochastic process whose distribution converges to the uniform distribution on those combinatorial objects.

Intuitively, $x_{0},\ldots,x_{n}$ should resemble the binomial coefficients $\binom{n}{0},\binom{n}{1},\ldots,\binom{n}{n}$.  Note that if $E=\emptyset$, $x_{k}=\binom{n}{k}$ for all $0\leq k\leq n$.  The sequence of binomial coefficients is both unimodal and log-concave, so one might expect the independent set sequence of a random graph to have this same behavior.

\begin{definition}
We say a sequence $a_{0},\ldots,a_{n}$ of real numbers is \textbf{unimodal} if there exists $0\leq j\leq n$ such that
$$a_{0}\leq a_{1}\leq\cdots\leq a_{j}\geq a_{j+1}\geq a_{j+2}\geq\cdots\geq a_{n}.$$
We say a sequence $a_{0},\ldots,a_{n}$ of real numbers is \textbf{log-concave} if
$$a_{k}^{2}\geq a_{k+1}a_{k-1},\qquad\forall\,1\leq k\leq n-1.$$
\end{definition}
\begin{remark}
A positive log-concave sequence of real numbers is unimodal. 
\end{remark}

The following example, demonstrated in \cite{alavi87}, shows that the independent set sequence of a graph might not be unimodal.

\begin{example}\label{ex1}
For any integer $n\geq1$, let $K_{n}$ denote the complete graph on $n$ vertices.  Consider the graph $G= K_{37}+3 K_{4}$ formed by connecting all vertices of a $K_{37}$ to all vertices of three disjoint $K_{4}$'s.  Then
$$x_{1}=37+4\cdot 4=49,\qquad
x_{2}=4\cdot 4\cdot 3=48,\qquad
x_{3}=4^{3}=64.$$
That is, the independent set sequence of $G$ is not unimodal.
\end{example}

In fact, the independent set sequence can increase or decrease in any prescribed way:

\begin{theorem}[{\cite{alavi87}}]\label{althm}
Let $\pi\colon\{1,\ldots,j\}\to\{1,\ldots,j\}$ be any permutation.  Then there exists a graph $G$ whose largest independent set is of size $j$, such that
$$x_{\pi(1)}(G)< x_{\pi(2)}(G)<\cdots<x_{\pi(j)}(G).$$
\end{theorem}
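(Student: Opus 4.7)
The plan is to construct the required graph $G$ explicitly from complete graphs, exploiting two operations whose effect on the independent set sequence is easy to compute. First, for a disjoint union $G=K_{a_1}\cup\cdots\cup K_{a_j}$, every independent set contains at most one vertex from each clique, so $\alpha(G)=j$ and the independence polynomial factors as
\[
\sum_{k=0}^{j} x_k(G)\,t^k \;=\; \prod_{i=1}^{j}(1+a_i t),
\]
giving $x_k(G)=e_k(a_1,\ldots,a_j)$, the elementary symmetric polynomial. Second, for a join $G'=K_N+G$ with $\alpha(G)\geq 2$, any independent set of size $\geq 2$ must avoid the $K_N$ entirely, so $x_k(G')=x_k(G)$ for $k\geq 2$, while $x_1(G')=N+x_1(G)$. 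Thus joining with a sufficiently large clique lets us slide $x_1$ into any desired rank without disturbing the remaining terms.

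Given a permutation $\pi$ of $\{1,\ldots,j\}$, I would proceed by induction on $j$. The base case is trivial. For the inductive step, first I would use the inductive hypothesis to obtain a graph $H$ with $\alpha(H)=j-1$ whose sequence $x_1(H),\ldots,x_{j-1}(H)$ has the ordering dictated by the restriction of $\pi$ to the smaller index set. Then I would modify $H$ by adjoining (via disjoint union) a new clique $K_{a_j}$ and, if necessary, taking the join with a large $K_N$ to shift $x_1$ to its prescribed rank. To control the position of the new coordinate $x_j$ relative to the old ones, I would tune the clique sizes on a widely separated scale such as $a_i=M^{b_i}$ with $b_i$ chosen so that $e_k(a_1,\ldots,a_j)\sim M^{B_k}$ for the appropriate subset-sum exponent $B_k$, and then let $M\to\infty$.

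The step I expect to be the main obstacle is precisely this realizability of arbitrary orderings, because elementary symmetric polynomials in positive reals are constrained by Newton's inequalities (they are log-concave), so a single disjoint union of cliques can only produce unimodal or monotone sequences. Overcoming this rigidity requires mixing joins and disjoint unions so that different $x_k$'s draw their size from qualitatively different contributions (one from the joined clique, others from different tiers of the $a_i$-scaling). The inductive framework helps because at each step only one new coordinate must be positioned, reducing the problem to showing that we can make $x_j$ arbitrarily larger or smaller than any prescribed threshold while preserving the ordering of the previously constructed coordinates, which is achievable through a careful choice of the new clique size together with a compensating outer join with $K_N$.
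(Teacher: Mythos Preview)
Your building blocks and overall plan match the paper's one-line proof (``iterate the construction in Example~\ref{ex1}''): one inducts on $j$, at each step taking a disjoint union with a clique and then a join with a clique. However, your description of the inductive step has the roles of $x_1$ and $x_j$ reversed, and as written the step does not go through.

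You propose to take $H$ with $\alpha(H)=j-1$ realizing the restriction of $\pi$, form $H\cup K_{a_j}$, and then tune $a_j$ (and an outer join $K_N$) to place $x_j$ while ``preserving the ordering of the previously constructed coordinates.'' But the disjoint union does not preserve those coordinates: $x_k(H\cup K_{a_j})=x_k(H)+a_j\,x_{k-1}(H)$ for every $k$, so unless $a_j$ is large the relative order of $x_2,\ldots,x_{j-1}$ can be destroyed; and if $a_j$ is large, then $x_k\approx a_j\,x_{k-1}(H)$ for all $k\geq 1$, so the sequence of $H$ is \emph{shifted up by one index}, not preserved. Meanwhile the join with $K_N$ moves only $x_1$. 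Hence the coordinate you can place freely at the end of the step is $x_1$, not $x_j$.

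The clean induction (which is what ``iterating Example~\ref{ex1}'' amounts to) is: given $\pi$ on $\{1,\ldots,j\}$, delete the position of $1$ and shift the remaining values down by one to get a permutation $\pi'$ of $\{1,\ldots,j-1\}$; apply the inductive hypothesis to obtain $H$ with $\alpha(H)=j-1$ realizing $\pi'$; take $r$ large and set $G_0=H\cup K_r$, so that $x_{k}(G_0)\approx r\,x_{k-1}(H)$ and hence $x_2(G_0),\ldots,x_j(G_0)$ inherit exactly the order $\pi$ prescribes on $\{2,\ldots,j\}$; finally take $G=K_N+G_0$ with $N$ chosen to slot $x_1(G)$ into the rank $\pi$ demands (this works because, for $r$ large, $x_1(G_0)\approx r$ is already the smallest, and $N$ can only increase it). With this corrected step, your Newton-inequality worry evaporates: at no stage do you need to realize a non-unimodal order via elementary symmetric polynomials alone.
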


Theorem \ref{althm} is proven by iterating the construction in Example \ref{ex1}.  Perhaps motivated by the odd behavior observed in Theorem \ref{althm}, where many edges in the graph can arbitrarily distort the independent set sequence, it was asked in \cite{alavi87} if trees or forests have a unimodal independent set sequence.

\begin{definition}
A \textbf{tree} on $n$ vertices is a connected graph with no cycles.  A \textbf{forest} is a disjoint union of trees.
\end{definition}

\begin{question}[{\cite{alavi87}}]\label{conj1}
Any tree or forest has a unimodal independent set sequence.
\end{question}
\begin{remark}[{\cite{alavi87}}]
A disjoint union of graphs with unimodal independent set sequence might not have a unimodal independent set sequence (the independent set sequence of a disjoint union is the convolution of the two separate sequences, and it can occur that the convolution of two unimodal sequences is not be unimodal).  So, the tree case of Question \ref{conj1} does not imply the forest case.
\end{remark}

Despite much effort, including \cite{in2,in3,in5,in4,galvin11,galvin12,in1,in6,in9}, Question \ref{conj1} remains open.  The cited works mostly focus on answering Question \ref{conj1} for particular families of trees. One general partial result towards Question \ref{conj1} is the following.

\begin{theorem}[{\cite{levit07}}]\label{levitthm}
Let $T$ be a tree whose largest independent set is of size $j$.  Then the ``last third'' of the independence set sequence is unimodal:
$$x_{\lceil(2j-1)/3\rceil}(T)\geq x_{1+\lceil(2j-1)/3\rceil}(T)\geq\cdots\geq x_{j-1}(T)\geq x_{j}(T).$$
\end{theorem}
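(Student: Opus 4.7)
The plan is to construct, for each integer $k$ in the range $\lceil(2j-1)/3\rceil \le k \le j-1$, an injection $\phi_k \colon \mathcal{I}_{k+1} \hookrightarrow \mathcal{I}_k$, where $\mathcal{I}_m$ denotes the family of independent sets of size $m$ in $T$. The existence of such injections is equivalent to the claimed chain of inequalities.

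For the structural setup, I would exploit that a tree is bipartite and hence K\"onig--Egerv\'ary, so a maximum matching $M$ of $T$ has size $\mu(T) = n - j$, and the set $U$ of $M$-unmatched vertices has size $2j - n$ (which is $\ge 0$ since $j \ge n/2$ for any tree) and is itself independent by maximality of $M$ (any edge inside $U$ could be added to $M$). For any $S \in \mathcal{I}_{k+1}$, independence of $S$ forces each $M$-matched vertex in $S$ to have its $M$-partner outside $S$, so removing any matched vertex of $S$ yields an element of $\mathcal{I}_k$. The challenge is to choose the removed vertex canonically so that the resulting map is invertible from its output alone.

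My candidate for $\phi_k$ is to select $v_S \in S$ by an $M$-alternating argument on $T$---for instance, root $T$ arbitrarily and take $v_S$ to be the deepest $M$-matched vertex of $S$ in the induced subforest on $S \cup N(S)$, with ties broken by vertex label. The intent is that from $R = \phi_k(S) = S \setminus \{v_S\}$ one recovers $v_S$ as the unique $M$-matched vertex $v \notin R$ with $R \cup \{v\} \in \mathcal{I}_{k+1}$ that is deeper than every $M$-matched vertex of $R$ lying on the same root-to-$v$ path, using the rigidity of the tree together with the fixed matching $M$.

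The main obstacle, and the reason the constant $2/3$ enters, is proving injectivity precisely in the range $k+1 > 2j/3$. Two distinct $S_1,S_2 \in \mathcal{I}_{k+1}$ sharing a $\phi_k$-image would produce two different admissible extensions of a common $k$-subset, and one must rule this out in the stated range. I expect a pigeonhole argument of the following form: any $S \in \mathcal{I}_{k+1}$ decomposes as $|S \cap U| \le 2j - n$ together with $|S \cap V(M)| \le n - j$, and once $|S| > 2j/3$ these upper bounds become tight enough that any two candidate removals yielding the same $R$ must coincide with the canonical rule. If this direct injection approach stalls, a fallback is to verify Hall's condition $|N(\mathcal{A})| \ge |\mathcal{A}|$ for every $\mathcal{A} \subset \mathcal{I}_{k+1}$ in the containment bipartite graph between $\mathcal{I}_{k+1}$ and $\mathcal{I}_k$, via a deficit version of the double-counting identity $(k+1)\,|\mathcal{I}_{k+1}| = \sum_{R \in \mathcal{I}_k} d(R)$, where $d(R)$ is the number of single-vertex extensions of $R$.
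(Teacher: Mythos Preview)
The paper does not prove this theorem; it is cited from Levit--Mandrescu with only the one-line hint that the proof ``works for any bipartite graph, using that such graphs are perfect with clique number at most~$2$.''  That hint points to a much shorter argument than the one you are attempting, and your proposal has a genuine gap: injectivity of $\phi_k$ is never established.  Your ``deepest $M$-matched vertex'' rule is not obviously recoverable from $R=\phi_k(S)$ alone, because the depth you use is computed inside the subforest on $S\cup N(S)$, which changes with $S$; nothing you wrote rules out two distinct $S_1,S_2$ yielding the same $R$ with two different ``deepest'' removed vertices.  The pigeonhole sketch you offer does not close this, and your Hall-condition fallback is stated without the crucial bound on the degrees $d(R)$.

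The intended argument avoids any explicit injection.  For $R\in\mathcal I_k$ set $W_R=\{v\notin R: R\cup\{v\}\in\mathcal I_{k+1}\}$.  If $W'\subset W_R$ is independent in $T$ then $R\cup W'$ is independent, so $\alpha(T[W_R])\le j-k$.  But $T[W_R]$ is bipartite (this is exactly ``perfect with clique number at most $2$''), hence $\alpha(T[W_R])\ge |W_R|/2$.  Combining, $|W_R|\le 2(j-k)$.  Now double-count pairs $R\subset S$ with $|R|=k$, $|S|=k+1$:
\[
(k+1)\,x_{k+1}(T)\;=\;\sum_{R\in\mathcal I_k}|W_R|\;\le\;2(j-k)\,x_k(T),
\]
so $x_{k+1}(T)\le x_k(T)$ whenever $2(j-k)\le k+1$, i.e.\ $k\ge(2j-1)/3$.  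Your fallback paragraph was circling this identity but missed the bound $d(R)=|W_R|\le 2(j-k)$, which is the entire content of the proof.
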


That is, Question \ref{conj1} is ``one-third true.''  The proof of Theorem \ref{levitthm} works for any bipartite graph, using that such graphs are perfect with clique number at most $2$.  It was therefore asked in \cite{levit07} whether all bipartite graphs have unimodal independent set sequence, though this was proven false.

\begin{theorem}[{\cite{kahn13}}]
There exists a bipartite graph whose independent set sequence is not unimodal.
\end{theorem}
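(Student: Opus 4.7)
The proof is by exhibiting an explicit bipartite graph $G$ whose independent set sequence is not unimodal. Since Example \ref{ex1} already achieves non-unimodality in the non-bipartite world via the join $K_{37}+3K_4$, the first thing I would try is a bipartite analog in a family with a tractable generating polynomial. A natural candidate is a disjoint union of complete bipartite graphs $G=\bigsqcup_{i=1}^m K_{a_i,b_i}$, for which the independence polynomial factors as
$$I(G;t)=\sum_{k\geq 0}x_k(G)\,t^k=\prod_{i=1}^m\bigl[(1+t)^{a_i}+(1+t)^{b_i}-1\bigr].$$
I would perform a bounded search over small $m$ and $(a_i,b_i)$, possibly augmented with isolated vertices or pendant edges (which just multiply the polynomial by factors of $(1+t)$ or $(1+2t)$), looking for an index $j$ with $x_j(G)<\min(x_{j-1}(G),x_{j+1}(G))$. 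Once a candidate appears, non-unimodality is certified by a direct coefficient computation.

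If no such disjoint union works, my fallback is a structural reduction from a non-bipartite witness. Given a graph $H$ with non-unimodal independent set sequence (for example $H=K_{37}+3K_4$), form its bipartite double cover $\tilde H:=H\times K_2$, with vertex set $V(H)\times\{0,1\}$ and edges $(u,0)\sim(v,1)$ iff $uv\in E(H)$. Independent sets of $\tilde H$ correspond to ordered pairs $(A,B)$ of subsets of $V(H)$ spanning no $H$-edge between $A$ and $B$, so $x_k(\tilde H)=\sum_{i+j=k}y_{i,j}(H)$, where $y_{i,j}(H)$ counts such edge-independent pairs with $|A|=i$, $|B|=j$. The task then reduces to transferring the dip in $(x_k(H))$ through the refined counts $y_{i,j}$ to a dip in $(x_k(\tilde H))$. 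A third alternative is probabilistic: sample $G$ from a bipartite Erd\H{o}s--R\'enyi model $G(n,n,p)$ with $p$ tuned so that the expected sequence $\mathbb{E}\,x_k(G)$ already fails unimodality at some index, then apply a concentration argument (in the spirit of this paper's Erd\H{o}s--R\'enyi results) to pin down a specific realization.

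The main obstacle is that almost every natural bipartite family---trees, cycles, paths, complete bipartite graphs, grids---does have (or is conjectured to have) a unimodal independent set sequence, so the desired construction must exploit a rather subtle imbalance between the two parts of the bipartition. Identifying the right imbalance is the essential work; once a candidate $G$ is in hand, verifying a single inequality of the form $x_{j-1}(G)>x_j(G)<x_{j+1}(G)$ is purely mechanical.
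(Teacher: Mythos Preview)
The paper does not prove this theorem; it is quoted without proof from \cite{kahn13} as background, so there is no argument in the paper to compare against.

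More importantly, what you have written is not a proof but a research plan. You list three possible strategies---a finite search over disjoint unions $\bigsqcup_i K_{a_i,b_i}$, the bipartite double cover of the known non-bipartite witness $K_{37}+3K_4$, and a probabilistic existence argument---and in each case you stop well short of producing either a specific graph or a proof that one must exist. The theorem asserts the existence of a single bipartite $G$ together with an index $j$ satisfying $x_{j-1}(G)>x_j(G)<x_{j+1}(G)$; none of your branches reaches that point. For the double-cover route, note that the counts $y_{i,j}(H)$ you introduce record only \emph{cross}-edges between $A$ and $B$, so in particular $y_{k,0}(H)=\binom{|V(H)|}{k}$ rather than $x_k(H)$; the dip in $(x_k(H))_k$ is not directly visible anywhere in the $y$-array, and ``transferring the dip'' is not a reduction but the entire problem. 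For the probabilistic route, non-unimodality of the \emph{expected} sequence $\E\, x_k$ does not by itself yield a non-unimodal realization, and the concentration machinery in this paper is deployed in precisely the opposite direction---to \emph{prove} unimodality with high probability. If you want an actual proof, consult \cite{kahn13} for the construction.
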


Due to the apparent difficulty of Question \ref{conj1}, Galvin asked the (potentially) easier question: Is it possible to answer Question \ref{conj1} for \textit{random} trees or \textit{random} forests (with high probability)?   As a ``first approximation'' to random trees, one can also try to prove unimodality of sparse Erd\"{o}s-Renyi random graphs, since these graphs are known to be locally tree-like (see e.g. \cite{grimmett80}).


Here is one such result in this direction. Let $V=\{1,\ldots,2n\}$.  Let $0<p<1$.  Let $E$ be a random subset of $\{\{i,j\}\in V\times V\colon 1\leq i\leq n<j\leq 2n\}$ such that
$$\P(\{i,j\}\in E)=p,\qquad\forall\,1\leq i\leq n<j\leq 2n$$
and such that the events $\{\{i,j\}\in E\}_{1\leq i\leq n<j\leq 2n}$ are independent.  Then $G=(V,E)$ is an \textbf{Erd\"{o}s-Renyi} random bipartite graph on $n$ vertices with parameter $0<p<1$.  This random graph is sometimes denoted as $G=G(n,n,p)$.

\begin{theorem}[{\cite{galvin11,galvin12}}]\label{galvthm}
There exists $c>0$ such that, almost surely, as $n\to\infty$, if $p\geq c\log n/\sqrt{n}$, then $G(n,n,p)$ has unimodal independent set sequence.
\end{theorem}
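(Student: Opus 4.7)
The plan is to decompose $x_k(G)$ into a deterministic dominant piece plus a small random correction, and argue that the correction is too small to destroy the unimodality of the dominant piece. Let $A=\{1,\ldots,n\}$ and $B=\{n+1,\ldots,2n\}$ be the two parts of $G=G(n,n,p)$. Since $G$ is bipartite, every subset of a single part is automatically independent, so for $1\leq k\leq n$
$$x_k(G) = 2\binom{n}{k} + x_k^{\mathrm{mix}}(G),$$
where $x_k^{\mathrm{mix}}(G)\geq 0$ counts independent sets of size $k$ with vertices in both $A$ and $B$. The deterministic sequence $2\binom{n}{k}$ is unimodal on $\{0,1,\ldots,n\}$ with consecutive difference
$$2\binom{n}{k+1}-2\binom{n}{k}=2\binom{n}{k}\frac{n-2k-1}{k+1},$$
of absolute value at least $2\binom{n}{k}/(k+1)$ away from the possible plateau at $k=(n-1)/2$ (which only occurs for $n$ odd). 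Hence it suffices to show that, with probability $1-o(1)$, $x_k^{\mathrm{mix}}(G)$ is strictly less than this gap for every $1\leq k\leq n$, and $x_k(G)=0$ for every $k>n$.

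First I compute
$$\E[x_k^{\mathrm{mix}}(G)]=\sum_{j=1}^{k-1}\binom{n}{j}\binom{n}{k-j}(1-p)^{j(k-j)},$$
classifying mixed independent sets by the number $j$ of their vertices in $A$. The exponent $j(k-j)$ is concave on $[1,k-1]$, attaining its minimum value $k-1$ at the endpoints $j=1,k-1$; so once $k$ is large enough that the decay $(1-p)^{k-1}$ overwhelms the binomial growth from the boundary to the middle, a ratio comparison shows the sum is dominated by its boundary terms, giving
$$\E[x_k^{\mathrm{mix}}(G)]\leq C k n\binom{n}{k-1}(1-p)^{k-1}.$$
Under the hypothesis $p\geq c\log n/\sqrt{n}$, the factor $(1-p)^{k-1}\leq e^{-c(k-1)\log n/\sqrt{n}}$ decays super-polynomially once $k\gtrsim\sqrt{n}$, yielding $\E[x_k^{\mathrm{mix}}(G)]\leq \binom{n}{k}/n^{\Omega(\log n)}$ in that range. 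For $k\lesssim\log n$, the crude bound $\E[x_k^{\mathrm{mix}}(G)]\leq \binom{2n}{k}-2\binom{n}{k}\leq 2^k\binom{n}{k}$ already suffices, because the gap $\binom{n}{k+1}-\binom{n}{k}\approx (n/k)\binom{n}{k}$ is polynomially larger than $2^k\binom{n}{k}$; the intermediate range $\log n\lesssim k\lesssim\sqrt{n}$ is bridged by interpolating between the two estimates using the concavity of $j(k-j)$. A parallel computation for $k>n$, where the sum runs over $\max(1,k-n)\leq j\leq\min(n,k-1)$ and every term carries a factor of at least $(1-p)^{n(k-n)}$, gives $\E[x_k(G)]=n^{-\omega(1)}$.

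Finally I apply Markov's inequality and a union bound over $k\in\{1,\ldots,2n\}$: with probability $1-o(1)$, $x_k^{\mathrm{mix}}(G)$ is smaller than the binomial gap for every $1\leq k\leq n$, and $x_k(G)=0$ for every $k>n$. Combining this with the deterministic unimodality of $2\binom{n}{k}$ shows that the full sequence $x_0(G),\ldots,x_{2n}(G)$ is unimodal with mode within one step of $n/2$. The main obstacle is Step~2, the uniform bound on $\E[x_k^{\mathrm{mix}}(G)]$: the range of $k$ naturally splits into three regimes (small $k$ where the exploding binomial gap absorbs a trivial bound, large $k\gtrsim\sqrt{n}$ where the decay $(1-p)^{k-1}$ is super-polynomial, and an intermediate transition region), and the constant $c$ must be chosen so that these estimates mesh across all three. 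The $\log n$ factor in the hypothesis $p\geq c\log n/\sqrt{n}$ provides exactly the super-polynomial margin needed to absorb the polynomial losses incurred by Markov's inequality and the union bound.
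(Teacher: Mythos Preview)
This theorem is not proved in the present paper at all: it is quoted as a known result of Galvin (the citations \texttt{galvin11}, \texttt{galvin12}) and serves only as motivation for the paper's own work on sparser graphs and trees. So there is no ``paper's own proof'' to compare your proposal against.

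That said, a few remarks on your outline. The decomposition $x_k(G)=2\binom{n}{k}+x_k^{\mathrm{mix}}(G)$ and the strategy of showing that $x_k^{\mathrm{mix}}$ is smaller than the adjacent binomial gap is a reasonable approach, and the endpoint regimes are handled plausibly: for $k\lesssim\log n$ the deterministic Vandermonde bound $x_k^{\mathrm{mix}}\leq\binom{2n}{k}-2\binom{n}{k}$ is indeed beaten by the gap $\sim(n/k)\binom{n}{k}$, and for $k\gtrsim\sqrt{n}$ the factor $(1-p)^{k-1}\leq n^{-c(k-1)/\sqrt{n}}$ becomes super-polynomially small. The soft spot is the intermediate window $\log n\lesssim k\lesssim\sqrt{n}$. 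In that range your claim that ``the sum is dominated by its boundary terms'' is \emph{not} yet true: the ratio of the $j{=}2$ term to the $j{=}1$ term is roughly $\tfrac{k-1}{2}(1-p)^{k-3}$, which exceeds $1$ until $k$ is of order $\sqrt{n}$ (given $p\asymp\log n/\sqrt{n}$). So the maximiser of $\binom{n}{j}\binom{n}{k-j}(1-p)^{j(k-j)}$ sits in the interior, and ``interpolating using concavity of $j(k-j)$'' is not a proof. What actually works is to locate the interior maximiser (balancing $\log(n/j)$ against $p(k-2j)$), bound the maximum term, and check directly that it is $o\bigl(\binom{n}{k}/k\bigr)$; the $\log n$ in the hypothesis $p\geq c\log n/\sqrt{n}$ is exactly what makes this go through across the whole window once $c$ is large enough. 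Until that computation is written out, the argument has a gap.
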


When the parameter $p$ is around $2/n$, the Erd\"{o}s-Renyi random bipartite graphs closely resemble trees, since in both cases the expected degree of a fixed vertex is around $2$.  (Though the Erd\"{o}s-Renyi random graph will have a constant fraction of its vertices with degree zero, unlike a tree.)  One main difficulty in proving a result such as Theorem  \ref{galvthm} is: when this random graph $G$ is sparse (i.e. it does not have many edges),  $\mathrm{Var}[x_{\alpha n}(G)]$ is exponentially large as $n\to\infty$ (for fixed small $\alpha>0$).  So, ``standard'' concentration of measure results such as Chebyshev's inequality do not directly help.  (On the other hand, if one adjusts the probabilistic model in various ways, then Chebyshev's inequality can sometimes apply; see e.g. \cite{dani11,ding16}.)

In fact, $x_{k}(G)$ does not concentrate around its expected value at all (see e.g. \cite[Corollary 19]{coja15}.)  So, it seems difficult to apply concentration of measure techniques directly to $x_{k}(G)$.  We get around this issue by proving concentration of measure results for the number of vertices not connected to a given independent set, using a conditioning argument of \cite{coja15}.    In this way, the vertex expansion property of the random graph plays a key role in the proof.

\subsection{Independence Polynomials}

A property of sequences that is stronger than log-concavity is the real-rooted property.

\begin{remark}[Newton]\label{newtonrk}
Let $a_{0},\ldots,a_{n}\in\R$.  If the polynomial $t\mapsto\sum_{k=0}^{n}a_{k}t^{k}$ has all real roots, then $a_{0},\ldots,a_{n}$ is log-concave.  In fact, $a_{0}/\binom{n}{0},\ldots,a_{n}/\binom{n}{n}$ is log-concave.
\end{remark}

\begin{definition}
The \textbf{independence polynomial} of a graph $G$ on $n$ vertices is
$$I(G,t)\colonequals \sum_{k=0}^{n}x_{k}(G) t^{k},\qquad\forall\,t\in\R.$$
\end{definition}

\begin{theorem}[\cite{hamidoune90}]\label{hamthm}
If $G$ is claw-free, then $x_{0}(G),\ldots,x_{n}(G)$ is log-concave.
\end{theorem}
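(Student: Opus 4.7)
The plan is to establish log-concavity directly by constructing, for each $1 \leq k \leq n-1$, an injection
\[
\phi : \mathcal{I}_{k-1}(G) \times \mathcal{I}_{k+1}(G) \hookrightarrow \mathcal{I}_k(G) \times \mathcal{I}_k(G),
\]
where $\mathcal{I}_j(G)$ denotes the family of independent sets of size $j$ in $G$. Since $|\mathcal{I}_j(G)| = x_j(G)$, such injections immediately yield $x_{k-1}(G)\, x_{k+1}(G) \leq x_k(G)^2$, which is exactly log-concavity.

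Given a pair $(A, B)$ with $|A| = k - 1$ and $|B| = k + 1$, I form the auxiliary bipartite graph $H$ with vertex set $A \triangle B$, parts $A \setminus B$ and $B \setminus A$, and edges inherited from $G$; there are no $G$-edges inside either part because $A$ and $B$ are each independent. The key use of claw-freeness is the following: if some $v \in A \setminus B$ had three $H$-neighbors $u_1, u_2, u_3 \in B \setminus A$, then $u_1, u_2, u_3$ would be pairwise non-adjacent (all belonging to the independent set $B$), so $\{v, u_1, u_2, u_3\}$ would span an induced $K_{1,3}$ in $G$, contradicting claw-freeness. Hence every vertex of $H$ has degree at most $2$, and $H$ is a disjoint union of paths and even cycles.

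Even cycles contribute equally to both sides of the bipartition, so the entire imbalance $|B \setminus A| - |A \setminus B| = 2$ comes from paths. Each path contributes $-1$, $0$, or $+1$ according to whether both, one, or neither of its endpoints lies in $B \setminus A$, so the number of paths with both endpoints in $B \setminus A$ exceeds the number with both endpoints in $A \setminus B$ by exactly two. Thus at least one ``$B$-path'' $P = b_0 a_1 b_1 a_2 \cdots a_m b_m$ exists, with $b_i \in B \setminus A$ and $a_j \in A \setminus B$. Selecting such a $P$ by a canonical rule coming from a fixed vertex ordering of $G$ (for example, the path containing the smallest-labeled endpoint among all $B$-paths), I set
\[
A' \colonequals (A \setminus \{a_1, \dots, a_m\}) \cup \{b_0, \dots, b_m\}, \qquad B' \colonequals (B \setminus \{b_0, \dots, b_m\}) \cup \{a_1, \dots, a_m\}.
\]
A direct count shows $|A'| = |B'| = k$, and independence of $A', B'$ in $G$ follows because each $b_i$'s $H$-neighbors all lie in $P \cap (A \setminus B) = \{a_1, \dots, a_m\}$ (removed from $A$ when forming $A'$), and symmetrically each $a_j$'s $H$-neighbors lie among the $b_i$'s.

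The main obstacle is proving that $\phi(A,B) \colonequals (A', B')$ is injective, because in $(A', B')$ both sides of the bipartition $A' \triangle B' = A \triangle B$ now have equal size, erasing the ``majority'' cue that originally singled out $P$. I would address this with the standard device of setting up the canonical choice so that $P$ is still distinguished after the flip — e.g., prescribing that in $(A, B)$ one selects the $B$-path $P$ containing the smallest-labeled vertex lying in some category of endpoints determined purely by $H$ and its bipartition, and then verifying that the analogous inverse rule applied to $(A', B')$ selects the same component (now an ``$A'$-path'') and reconstructs $(A, B)$. Carrying out this canonical-labeling bookkeeping, and confirming that flipping transposes the distinguished $B$-path into the distinguished $A'$-path, is the technical heart of the argument; once verified, log-concavity of $x_0(G), \dots, x_n(G)$ is immediate from $\phi$.
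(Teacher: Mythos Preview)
The paper does not supply a proof of this theorem; it is simply quoted from \cite{hamidoune90} as background, so there is no in-paper argument to compare against.

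Your structural setup is correct and is indeed one of the standard routes to Hamidoune's result: on $A\triangle B$ the induced graph has maximum degree $2$ by claw-freeness, so it decomposes into paths and even cycles, and the imbalance $|B\setminus A|-|A\setminus B|=2$ forces at least two more $B$-paths than $A$-paths. Swapping along one $B$-path lands in $\mathcal I_k\times\mathcal I_k$, and $A'\triangle B'=A\triangle B$, $A'\cap B'=A\cap B$ are preserved.

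The gap is exactly where you flag it, and the particular rule you offer does \emph{not} work. Take four odd paths $R_1,R_2,R_3,R_4$ ordered by smallest label. If the $B$-paths are $\{R_1,R_3,R_4\}$ (so $R_2$ is the $A$-path), your rule flips $R_1$ and the image has $A'$-paths $\{R_1,R_2\}$, $B'$-paths $\{R_3,R_4\}$. If instead the $B$-paths are $\{R_2,R_3,R_4\}$, your rule flips $R_2$ and the image again has $A'$-paths $\{R_1,R_2\}$, $B'$-paths $\{R_3,R_4\}$. So ``smallest-labelled $B$-path'' is not injective, and no rule that looks only at the minimum label within one class can be, because after the flip the two classes have equal size and the symmetry cannot be broken that way. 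A rule that \emph{does} work is to list the odd paths in the fixed vertex order, read them as a $\pm1$ walk ($+1$ for $B$-paths), and flip at the step where the walk first attains its overall maximum; one checks that in the image the flipped step sits immediately after the \emph{last} occurrence of the maximum, which recovers it uniquely. Until you specify and verify such a rule, the injection --- and hence the log-concavity --- is not established.
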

\begin{theorem}[\cite{chudnovsky07}]\label{chudthm}
If $G$ is claw-free, then $I(G,t)$ has all real roots.
\end{theorem}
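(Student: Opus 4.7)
The plan is to prove a strengthened statement by induction on $|V(G)|$. Call two polynomials $f, g \in \R[t]$ \emph{compatible} if $\alpha f + \beta g$ has only real roots for every $\alpha, \beta \geq 0$; a family is \emph{pairwise compatible} if every pair in it is compatible. The strengthened claim I would aim for is: for any claw-free graph $G$ and any clique $K \subseteq V(G)$, the family $\{I(G - v, t) : v \in K\} \cup \{I(G, t)\}$ is pairwise compatible. Since each member of a pairwise compatible family is individually real-rooted, this would give Theorem \ref{chudthm}.

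The basic tool is the deletion recursion
\begin{equation*}
I(G, t) = I(G - v, t) + t \cdot I(G - N[v], t),
\end{equation*}
obtained by splitting independent sets of $G$ based on whether they contain $v$ (here $N[v]$ denotes the closed neighborhood of $v$). The claw-free hypothesis enters through the structural fact that for every vertex $v$, the induced subgraph on $N(v)$ has independence number at most $2$ --- indeed, three mutually non-adjacent neighbors of $v$ together with $v$ would form an induced $K_{1,3}$. Before beginning the induction I would assemble a short toolbox of standard Hermite--Biehler-style lemmas on compatibility: that it is preserved under multiplication by $t$; that a pairwise compatible family with a common interlacer is closed under nonnegative linear combinations; and analogous closure properties needed to combine pieces.

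The inductive step would proceed by choosing an auxiliary vertex $v$, applying the recursion to each $I(G - u, t)$ with $u \in K$ and also to $I(G, t)$, and rewriting everything as nonnegative combinations of independence polynomials of strictly smaller claw-free induced subgraphs. The inductive hypothesis, applied to these smaller graphs with appropriately chosen cliques drawn from $K \setminus \{v\}$ and the structure of $N(v)$, would yield pairwise compatibility of the building blocks, and the combination lemmas would lift the conclusion back to the level of $G$. The hard part will be the case analysis in this step: one must verify that every polynomial produced by the recursion is a legitimate instance of the inductive hypothesis, with the correct nonnegativity of the combining coefficients; here the structural restriction that $N(v)$ has independence number at most $2$ is precisely what makes the accounting close up. The claw-free condition is essential --- one can check directly that $I(K_{1,3}, t) = 1 + 4t + 3t^2 + t^3$ has a complex conjugate pair of roots (its discriminant is $-31$), so without this hypothesis even the simplest instance of compatibility fails.
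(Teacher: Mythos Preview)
The paper does not give its own proof of this theorem: it is stated as a known result with a citation to \cite{chudnovsky07} (Chudnovsky--Seymour), and is used only as background to note that the real-rootedness route cannot work for trees. So there is no ``paper's own proof'' to compare against.

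Your outline is essentially the Chudnovsky--Seymour argument itself: the strengthened inductive hypothesis about pairwise compatibility of the polynomials $\{I(G-v,t) : v \in K\} \cup \{I(G,t)\}$ for $K$ a clique, the deletion recursion $I(G,t) = I(G-v,t) + t\,I(G-N[v],t)$, and the use of claw-freeness via the fact that each $N(v)$ has independence number at most $2$. That is the right skeleton, and the example $I(K_{1,3},t) = 1+4t+3t^2+t^3$ with negative discriminant is exactly the one the paper quotes to show claw-freeness is necessary. The genuine work, as you acknowledge, is in the compatibility lemmas and the case analysis that lets the induction close; your proposal flags this but does not carry it out, so as written it is a correct plan rather than a complete proof.
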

Theorem \ref{chudthm} generalized Theorem \ref{hamthm} by Remark \ref{newtonrk}.

So, when $G$ is quite unlike a tree, its independent set sequence is log-concave and unimodal.  Unfortunately, the real-rooted property cannot hold for trees.  So, one cannot answer Question \ref{conj1} by proving a real-rootedness property.

\begin{example}[\embolden{Claw Graph}]
The tree on four vertices with one degree $3$ vertex has independence polynomial
$$1+4t+3t^{2}+t^{3},$$
which has two complex roots.
\end{example}

In contrast, the matching polynomial of a general graph behaves much better than the independence polynomial.  For a graph $G$ with $n$ vertices, let $m_{i}$ denote the number of matchings in $G$ with $i$ edges, and let $m_{0}\colonequals 1$.  Heilmann and Lieb \cite{in10} defined the matching polynomial of $G$ as
$$\mu_{G}(t)\colonequals\sum_{k\geq0}(-1)^{k}m_{k}t^{n-2k},\qquad\forall\,t\in\R.$$
\begin{theorem}[{\cite[Theorems 4.2 and 4.3]{in10}}]
For every graph $G$, $\mu_{G}(t)$ has only real roots.  Moreover, if $G$ has maximum degree $d$, then all roots of $\mu_{G}$ have absolute value at most $2\sqrt{d-1}$
\end{theorem}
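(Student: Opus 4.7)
The plan is to carry out the classical Heilmann--Lieb proof, whose engine is the single-vertex deletion recursion
$$\mu_G(t) \;=\; t\,\mu_{G-v}(t) \;-\; \sum_{u \sim v} \mu_{G-u-v}(t)$$
for any vertex $v$ of $G$. This is verified by partitioning the matchings of $G$ according to whether $v$ is unmatched (the factor $t$ absorbs the change in vertex count) or paired with a neighbor $u$ (the minus sign absorbs the extra matching edge).

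I would first dispatch the tree case. For any forest $F$, expanding $\det(tI - A_F)$ by the Leibniz formula leaves contributions only from involutions, since any cycle of length $\geq 3$ in a permutation of $V(F)$ would induce a cycle in $F$. These involutions correspond bijectively to matchings of $F$, and the signs work out so that $\mu_F(t) = \det(tI - A_F)$. Hence $\mu_F$ is real-rooted by the spectral theorem for the real symmetric matrix $A_F$.

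To pass from trees to general graphs, I would invoke Godsil's path-tree construction: for any $v \in V(G)$, let $T = T(G,v)$ be the tree whose vertices are simple paths in $G$ starting at $v$, with two paths adjacent iff one extends the other by a single vertex. By induction on $|V(G)|$, applying the recursion on both $G$ and $T$ and matching terms, one proves
$$\frac{\mu_{G-v}(t)}{\mu_G(t)} \;=\; \frac{\mu_{T-v}(t)}{\mu_T(t)},$$
and then, with a more careful inductive bookkeeping, one upgrades this to the polynomial divisibility $\mu_G(t) \mid \mu_T(t)$. Since $T$ is a tree, $\mu_T$ is real-rooted by the previous step, so $\mu_G$ is real-rooted as well.

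For the bound, I would observe that if the maximum degree of $G$ is $d$, then so is the maximum degree of $T$: a non-root vertex of $T$ corresponds to a path ending at some $u \in V(G)$ and has one parent (its prefix) plus at most $\deg_G(u)-1\leq d-1$ children (extensions by a neighbor of $u$ not already in the path), while the root has at most $d$ children. Since $\mu_T$ equals the characteristic polynomial of $A_T$, and since a tree with maximum degree $\leq d$ has spectral radius $\leq 2\sqrt{d-1}$ (seen by embedding $T$ as a subtree of the infinite $d$-regular tree, whose spectral radius is exactly $2\sqrt{d-1}$, and invoking monotonicity of spectral radius under subgraph containment), every root of $\mu_T$, and hence of $\mu_G$, lies in $[-2\sqrt{d-1},\,2\sqrt{d-1}]$. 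The principal obstacle is the divisibility $\mu_G(t)\mid \mu_T(t)$: the identity of quotients is not by itself enough, and one must argue inductively, tracking how deletion of vertices in $G$ corresponds to deletion of subpaths in $T$, so that a vanishing of denominators never produces spurious cancellations. Everything else — the recursion, the forest-determinant identity, and the tree spectral-radius bound — is short and standard.
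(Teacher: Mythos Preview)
The paper does not actually prove this theorem; it merely quotes it as a background result of Heilmann and Lieb (reference \cite{in10}) and moves on. So there is no ``paper's own proof'' to compare against.

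That said, your outline is a correct proof, and it is worth noting that it is \emph{not} the original Heilmann--Lieb argument but rather Godsil's later path-tree approach. Heilmann and Lieb work directly with the recursion $\mu_G(t) = t\,\mu_{G-v}(t) - \sum_{u\sim v}\mu_{G-u-v}(t)$ and prove by induction that the roots of $\mu_{G-v}$ strictly interlace those of $\mu_G$; this yields real-rootedness without ever invoking trees or characteristic polynomials. For the bound $2\sqrt{d-1}$ they argue, again from the recursion, that the ratio $\mu_{G-v}(t)/\mu_G(t)$ is controlled for real $t$ with $|t|>2\sqrt{d-1}$, forcing $\mu_G(t)\neq 0$ there. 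Your route --- reduce to trees via the divisibility $\mu_G\mid\mu_{T(G,v)}$, then use $\mu_T=\det(tI-A_T)$ and the spectral radius of the infinite $d$-regular tree --- is more conceptual and connects the matching polynomial to genuine spectral theory, which is exactly why it became the preferred proof in the Ramanujan-graph literature. The Heilmann--Lieb argument is shorter and self-contained but less illuminating. Your identification of the divisibility step as the delicate point is accurate: the equality of quotients $\mu_{G-v}/\mu_G = \mu_{T-v}/\mu_T$ alone does not give $\mu_G\mid\mu_T$, and one must induct carefully, showing in fact that $\mu_T$ factors as $\mu_G$ times a product of matching polynomials of smaller graphs.
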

This Theorem was used in the construction of bipartite Ramanujan graphs in \cite{in11}, resolving a Conjecture of Lubotzky.

We should also mention that independent sets and ``shattering'' phenomena have played a role in constructing a counterexample \cite{in12} for the weak Pinsker property \cite{in13} from ergodic theory.

\subsection{Our Contribution}

A \embolden{random tree} $T$ on $n\geq2$ vertices is a random graph that is equal to any of the $n^{n-2}$ possible labelled trees on $n$ vertices, each with probability $1/n^{n-2}$.

\begin{theorem}[\embolden{Main; Partial Unimodality for Random Trees}]\label{mainthm2}
There exists $c>0$ such that, with probability at least $1-e^{-cn}$, a random tree $T$ on $n$ vertices satisfies
$$x_{0}(T)<x_{1}(T)<\cdots<x_{\lfloor (.26543)n\rfloor}(T).$$
\end{theorem}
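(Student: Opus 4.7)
The plan is to prove $x_k(T) < x_{k+1}(T)$ simultaneously for every $k < \lfloor 0.26543\,n\rfloor$ with probability $\geq 1 - e^{-cn}$, via the standard double-counting identity
\[
(k+1)\,x_{k+1}(T) \;=\; \sum_{S \in \mathcal{I}_{k}(T)}\bigl(n - k - |N_T(S)|\bigr),
\]
in which $\mathcal{I}_k(T)$ denotes the set of independent sets of $T$ of size $k$ and $N_T(S)$ is their open neighborhood. The target inequality becomes equivalent to the strict positivity of
\[
\Phi_k(T) \;\colonequals\; \sum_{|S|=k} \mathbf{1}_{\{S\in\mathcal{I}_k(T)\}}\bigl(n-2k-1-|N_T(S)|\bigr),
\]
so the goal reduces to showing that $\Phi_k(T)>0$ for all $k$ in the required range with exponentially high probability.

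The first step is a first-moment calculation of $\E[\Phi_k(T)]$ via the Prüfer-sequence representation of a uniform random labelled tree. Trees correspond bijectively with sequences $(p_1,\dots,p_{n-2})\in V^{n-2}$, and under this representation the degree of vertex $i$ is $1+|\{j:p_j=i\}|$, asymptotically distributed as $1+\mathrm{Poisson}(1)$; in particular the expected number of leaves is $\approx n/e$. For any fixed $S\subset V$ of size $k$, both $\P(S\in\mathcal{I}_k(T))$ and the conditional distribution of $|N_T(S)|$ given $\{S\in\mathcal{I}_k(T)\}$ admit explicit estimates in terms of joint Prüfer statistics. The trivial bound $|N_T(S)|\leq\sum_{v\in S}\deg_T(v)$ together with the average $\sum\deg\approx 2k$ would already yield $\E[\Phi_k(T)]>0$ for $k<n/4$. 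The sharper threshold $0.26543\,n$ should come from the fact that the hard-core (uniform independent set) measure biases $S$ toward low-degree vertices---leaves, of which there are $\approx n/e$, are most easily incorporated---so the expected sum of degrees of $S$ conditional on $\{S\in\mathcal{I}_k(T)\}$ is strictly less than $2k$; quantifying this bias via a fixed-point computation for the hard-core model on a Poisson$(1)$ Galton--Watson tree gives the precise constant.

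The second, and main, step is to promote the strict positivity of the expectation to exponential concentration of $\Phi_k(T)$. As emphasized in the introduction, $x_k(T)$ itself has exponentially large variance, so a naive Chebyshev argument is useless; instead, I would follow the conditioning strategy of \cite{coja15}. For each candidate $S_0\subset V$ with $|S_0|=k$, analyse the distribution of $T$ conditional on $\{S_0\in\mathcal{I}_k(T)\}$: this remains a product-like measure on the admissible Prüfer slots, and under it $|N_T(S_0)|$ decomposes as a sum of weakly dependent bounded indicators (one per vertex $u\notin S_0$ recording whether some Prüfer slot attaches $u$ to $S_0$), to which a bounded-differences (Azuma/McDiarmid) inequality applies, giving tails of order $e^{-\Omega(n)}$. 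A union bound over $S_0$ (costing at most $\binom{n}{k}\leq e^{H(k/n)n}$) and over $k$ then closes the argument. The main obstacle---and the true source of the specific constant $0.26543$---is ensuring that the concentration exponent strictly exceeds the entropy cost $H(k/n)\,n$ uniformly for all $k\leq 0.26543\,n$; this reduces to a quantitative vertex-expansion statement for the conditioned random tree, and the two exponents first cross precisely at the asserted threshold.
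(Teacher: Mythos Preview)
Your double-counting identity and the reduction to controlling $n-k-|N_T(S)|$ (the paper's $N_\sigma$) under the planted measure $\P(\,\cdot\mid S_0\in\mathcal{I}_k)$ are correct and match the paper. The gap is the union bound over all $S_0$. The planted-model concentration for $N_\sigma$ has exponent $s^{2}(1-\alpha)^{2}e^{-\alpha/(1-\alpha)}/2$ (Lemma~\ref{treecheaplemv2}); at $\alpha=0.265$ this is about $0.19\,s^{2}$, whereas your stated union-bound cost $\binom{n}{k}\approx e^{0.58n}$ (or even the sharper $\E X_{k,n}\approx e^{0.50n}$ one gets after multiplying by $\P(S_0\text{ indep})$) forces $s>1$, rendering the lower bound $(1-s)\E_{\mathcal{P}_k}N_\sigma$ vacuous. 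A direct union bound over $S_0$ therefore breaks down already well below $\alpha=0.1$ and cannot approach $0.26543$. Neither of your two explanations for the constant is correct: the planted mean $\E_{\mathcal{P}_k}N_\sigma=n(1-\alpha)^{2}e^{-\alpha/(1-\alpha)}$ exceeds $k$ up to $\alpha\approx 0.31$, not $0.26543$, and no hard-core fixed point enters.

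The paper avoids the union bound entirely. The identity gives $(k+1)X_{k+1}/X_{k}=\E_{\mathcal{U}_{k}(n)}[N_\sigma]$, where under $\mathcal{U}_{k}(n)$ one draws $T$ and then $\sigma$ uniformly among its size-$k$ independent sets. Lemma~\ref{lemma40a} transfers tail events from $\mathcal{U}_{k}$ to $\mathcal{P}_{k}$ at a multiplicative cost of $\E X_{k,n}/X_{k,n}$---a \emph{ratio}, not $\E X_{k,n}$ itself---and a Markov step (Lemma~\ref{ratiolemma}) then shows that on a good event only a negligible fraction of independent sets can have atypical $N_\sigma$, which suffices to control the average. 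The ratio $\E X_{k,n}/X_{k,n}$ is bounded \emph{deterministically} via Wingard's inequality $X_{k,n}(T)\geq\binom{n-k+1}{k}$, valid for every tree (Lemma~\ref{treelowerbound}), yielding a cost of only about $e^{0.016n}$ at $\alpha=0.265$. This is beaten by the planted concentration with $s\approx 0.29$, and $(1-s)(1-\alpha)^{2}e^{-\alpha/(1-\alpha)}/\alpha>1$ holds precisely up to $\alpha=0.26543$, as in \eqref{alphaeqb}. The constant thus comes from balancing Wingard's bound against the planted concentration and the planted mean (computed via the matrix-tree theorem, Lemma~\ref{lemma31}); it is unrelated to an entropy/concentration crossover.
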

For comparison, the largest independent set size in a random tree is about $\approx.567143n$, with fluctuations of order $\sqrt{n}$, by the Azuma-Hoeffding inequality \cite{frieze90}.  So, the first $46.8\%$ of the nontrivial independent set sequence is unimodal.  Combined with Theorem \ref{levitthm} of Levit and Mandrescu, Question \ref{conj1} is ``four-fifths true'', with high probability.  As noted in Remark \ref{nearlyrk}, our argument does not currently recover Theorem \ref{levitthm} of Levit and Mandrescu, with high probability as $n\to\infty$.

An earlier version of the manuscript had the constant $.2$ in Theorem \ref{mainthm2}, though David Galvin pointed out that Lemma \ref{treelowerbound} (originally resembling Lemma \ref{lowerbound}), could be replaced with \cite[Theorem 5.1]{wingard95}, thereby improving the constant $.2$ to $.26543$ in Theorem \ref{mainthm2}.

Part of the difficulty of improving on Theorem \ref{mainthm2} and \ref{levitthm} is identifying a largest element of the sequence $x_{0}(T),\ldots,x_{n}(T)$.  Even for a random tree, it is unclear if a largest element of $x_{0}(T),\ldots,x_{n}(T)$ has index close to $(1/2)(.567)n$, or far from this value, with high probability.

The proof of Theorem \ref{mainthm2} has two main ingredients, based upon \cite{coja15}.  Let $k$ less than the expected size of the largest independent set in the graph.  If we condition on a uniformly random subset $S\subset\{1,\ldots,n\}$ of size $k$ being an independent set (i.e. size-biasing), then we need to show two things:

\begin{itemize}
\item[(1)] Probabilities of events do not change too much due to this conditioning (see Remarks \ref{pkrk}, \ref{rkconc}, Lemma \ref{lowerbound} and the improved Corollary \ref{betterERlowerbound}.)
\item[(2)] The number of vertices in $\{1,\ldots,n\}\setminus S$ that are connected to $S$ concentrates exponentially around its expected value, as in Chernoff bounds.  (See Lemma \ref{nchernoff}.)
\end{itemize}

In a random tree, the largest independent set is more than half of the size of the whole tree.  So, conditioning on a uniformly random subset $S\subset\{1,\ldots,n\}$ of size $k$ being an independent set changes the probability law of a uniformly random tree quite a bit.  This large change in measure is the reason Theorem \ref{mainthm2} can only apply for the first $35\%$ of the nontrivial independent set sequence of a random tree.

However, in e.g. an Erd\"{o}s-Renyi random graph of large degree $d$, the largest independent set in the graph is approximately a $(2/d)\log d$ fraction of the graph.  So, conditioning on a uniformly random subset $S\subset\{1,\ldots,n\}$ of size $k\leq (2/d)\log d$ being an independent set does not change the probability law of the random graph very much.  This small change in measure with high expected degree results in a proof of unimodality for essentially all of the independent set sequence of an Erd\"{o}s-Renyi random graph, as follows.

\begin{theorem}[\embolden{Second Main Theorem; Unimodality, Sparse Case, High Degree}]\label{thm1}
Let $\epsilon>0$.  Then for any $d\geq 10^{10/\epsilon}$, there exists $c>0$ such that, with probability at least $1-e^{-cn}$, $G\in G(n,d/n)$ satisfies
$$x_{0}(G)<x_{1}(G)<\cdots<x_{\lfloor\beta(1-\epsilon)/2\rfloor}(G)
,\quad\mathrm{and}\quad
x_{\lfloor\beta(1+\epsilon)/2\rfloor}(G)>\cdots >x_{\beta-1}(G)>x_{\beta}(G),
$$
where $\beta$ is the expected size of the largest independent set in $G(n,d/n)$.  (It is known that $\beta\approx (2/d)(\log d-\log\log d-\log 2+1)$ by Theorem \ref{friezethm} of \cite{frieze90}).
\end{theorem}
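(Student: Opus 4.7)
The plan is to control the ratio $x_{k+1}(G)/x_k(G)$ through the identity $(k+1)x_{k+1}(G)=\sum_{|S|=k}\mathbf{1}\{S\text{ indep in }G\}\,N(S)$, where $N(S)$ counts the vertices $v\in V\setminus S$ with no edge to $S$. Dividing by $x_k(G)$, this ratio equals the average of $N(S)/(k+1)$ over uniformly random independent $k$-subsets $S$ of $G$. Hence $x_{k+1}(G)>x_k(G)$ follows once one knows that for the vast majority of independent $k$-sets $S$ one has $N(S)>k+1$; likewise the reverse inequality gives the decreasing regime. The task thus splits into (i) computing the typical value of $N(S)$, and (ii) showing $N(S)$ really is concentrated under the correct (size-biased) law of $S$.

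In the \emph{planted} model $\nu$---pick $S$ uniform in $\binom{V}{k}$, then draw $G\sim G(n,d/n)$ conditioned on $S$ independent---the edges from $S$ to $V\setminus S$ are unaffected by the conditioning, so $N(S)\sim\mathrm{Bin}(n-k,(1-d/n)^k)$, with mean $(n-k)(1-d/n)^k\approx ne^{-dk/n}$. Setting this equal to $k+1$ gives a critical $k^\ast\approx (n/d)(\log d-\log\log d)$, which is within $O(n/d)$ of $\beta/2$, i.e.\ within a multiplicative $O(1/\log d)$ factor. For $d\geq 10^{10/\epsilon}$, a direct calculation shows the ratio $(n-k)(1-d/n)^k/(k+1)$ is bounded away from $1$ by a fixed amount on both $\{k\leq\beta(1-\epsilon)/2\}$ and $\{\beta(1+\epsilon)/2\leq k\leq\beta\}$. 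Lemma \ref{nchernoff} then gives Chernoff concentration: under $\nu$, $|N(S)-(n-k)(1-d/n)^k|\leq\delta(n-k)(1-d/n)^k$ with probability $1-e^{-c'n}$, for any fixed small $\delta>0$.

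To transfer this concentration from $\nu$ to the size-biased law $\mu$ of $(G,S)$ with $G\sim G(n,d/n)$ and $S$ uniform among independent $k$-sets of $G$, observe that $d\mu/d\nu=x_k(G)/\E[x_k(G)]$. The lower bound of Corollary \ref{betterERlowerbound}---a statement of the form $x_k(G)\geq e^{-o(n)}\E[x_k(G)]$ with probability $1-e^{-cn}$, following \cite{coja15}---implies that any $\nu$-event of exponentially small probability has exponentially small $\mu$-probability as well. Applying this to the Chernoff failure event, with probability $1-e^{-c''n}$ over $G$ at most an exponentially small fraction of the independent $k$-sets $S$ can have $N(S)$ outside $(1\pm\delta)(n-k)(1-d/n)^k$. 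Choosing $\delta$ smaller than the mean gap from the previous paragraph then forces the average of $N(S)$ over independent $k$-sets to be strictly above (or strictly below) $k+1$, yielding the desired monotonicity of $x_k(G)$ at step $k$. A union bound over the $O(n)$ values of $k$ in the two ranges preserves the exponential probability.

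The main obstacle is Corollary \ref{betterERlowerbound}: since $x_k(G)$ is known not to concentrate around its mean, showing it does not fall exponentially below it (uniformly for all $k$ up through $\beta(1+\epsilon)/2$) requires a delicate argument, presumably a refined second-moment / planting comparison in the style of \cite{coja15}, and is the step that forces the strong hypothesis $d\geq 10^{10/\epsilon}$. The mean-gap computation and the Chernoff estimate are comparatively routine; the whole strategy hinges on the change-of-measure loss being subexponential while the concentration gain is genuinely exponential.
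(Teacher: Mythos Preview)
Your outline matches the paper's proof almost step for step: the counting identity (Lemma \ref{countinglemma}), the planted-model Chernoff bound (Lemma \ref{nchernoff}), the change of measure via Lemma \ref{lemma40a}/\ref{ratiolemma} using the lower bound Corollary \ref{betterERlowerbound}, and a union bound over $k$. So the architecture is right.

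There is, however, one quantitative misstatement that matters. You describe Corollary \ref{betterERlowerbound} as giving $x_k(G)\geq e^{-o(n)}\E[x_k(G)]$ and conclude that ``any $\nu$-event of exponentially small probability has exponentially small $\mu$-probability.'' That is not what the corollary says: the loss is $e^{-c_d n}$ with $c_d=20\,d^{-3/2}(\log d)^{-3/2}$, a genuine exponential in $n$ for each fixed $d$. Consequently a $\nu$-event of probability $e^{-an}$ transfers to $\mu$-probability at most $e^{(c_d-a)n}$, which is small only when the Chernoff rate $a$ beats $c_d$. Writing $k=\delta\beta$, the Chernoff rate is $\sim s^2 d^{-2\delta}$, so the constraint becomes $s^2\gtrsim d^{2\delta-3/2}(\log d)^{-3/2}$. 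For $\delta<3/4$ this allows $s\to 0$ as $d\to\infty$ and your argument goes through verbatim. For $3/4\leq\delta<1$ (which you do need, since the decreasing statement runs all the way to $k=\beta$) the forced $s$ diverges with $d$, so the two-sided concentration $|N_\sigma-\E N_\sigma|\leq s\,\E N_\sigma$ is vacuous as a lower bound. The paper handles this with a short extra computation: even with the large $s$, one checks $s\,\E_{\mathcal P_k(n)}N_\sigma/(k+1)\sim d^{-1/2}(\log d)^{-5/2}\to 0$, so the \emph{upper} bound in Lemma \ref{ratiolemma} still forces $X_{k+1}/X_k<1$.

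In short: replace ``subexponential'' by ``exponential with rate $\sim d^{-3/2}$'', balance the Chernoff deviation parameter $s$ against that rate rather than taking it fixed, and add the one-line case $\delta\geq 3/4$. With those fixes your proof is the paper's proof.
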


For the sake of demonstration, we write down the sub-optimal results when the expected degree $d$ is small.

\begin{theorem}[\embolden{Partial Unimodality, Sparse Case, Low Degree}]\label{thm2}
The independent set sequence of $G(n,d/n)$ is unimodal with high probability (as $n\to\infty$) for independent set sizes $k$ satisfying
\begin{itemize}
\item $k<.25n$ and $k>.46n$, when $d=1$.  (The largest independent set size is $\approx.728n$)
\item $k<.194n$ and $k>.39n$, when $d=2$.  (The largest independent set size is $\approx.607n$)
\item $k<.172n$ and $k>.35n$, when $d=e$.  (The largest independent set size is $\approx.552n$)
\end{itemize}
\end{theorem}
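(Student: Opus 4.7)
The plan is to specialize the size-biasing framework developed for Theorems \ref{mainthm2} and \ref{thm1} to the three small values $d \in \{1, 2, e\}$ and to carry out the numerical optimization case by case. Writing $x = k/n$ and starting from the identity
$$\frac{x_{k+1}(G)}{x_k(G)} = \frac{1}{k+1}\,\E\bigl[N_G(S) \,\big|\, S \text{ is an independent set of size } k\bigr],$$
where $N_G(S)$ counts vertices in $V\setminus S$ with no neighbor in $S$ and $S$ is uniform on $\binom{V}{k}$, the strategy has three parts. First, size-bias using Corollary \ref{betterERlowerbound} (the small-degree analogue of Lemma \ref{lowerbound}) to replace the conditional expectation by the unconditional one at a multiplicative cost of order $P_k(G)^{-1}$, where $P_k(G)$ is the probability that a uniform $k$-subset is independent. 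Second, apply the Chernoff-type Lemma \ref{nchernoff} to show that $N_G(S)$ concentrates exponentially around $(n-k)(1 - d/n)^k \approx n(1-x)e^{-dx}$. Together these yield, with probability at least $1 - e^{-c n}$,
$$\frac{x_{k+1}(G)}{x_k(G)} \approx \frac{(1-x)\,e^{-dx}}{x},$$
up to a size-biasing correction that depends on $x$ and $d$.

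The third step is the numerical optimization. The mean-field crossing $(1-x)e^{-dx} = x$ gives a naive mode location $x_d^*$; below $x_d^*$ the ratio exceeds $1$ (increasing) and above it the ratio is less than $1$ (decreasing). The size-biasing overhead grows with $x$ and blows up as $x$ approaches the largest-independent-set fraction $\beta_d$, so a buffer around $x_d^*$ is needed on each side. For each of $d = 1, 2, e$ I would balance the exponent in Corollary \ref{betterERlowerbound} against the Chernoff slack from Lemma \ref{nchernoff} and extract the explicit thresholds $(0.25,\, 0.46)$, $(0.194,\, 0.39)$, $(0.172,\, 0.35)$ respectively, using the corresponding $\beta_d \approx 0.728$, $0.607$, $0.552$ from Theorem \ref{friezethm}. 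The increasing half of each statement uses the lower threshold, and the decreasing half uses the upper threshold, with the same template argument applied to the reciprocal ratio.

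The main obstacle is the size-biasing cost. Because the largest independent set occupies a large constant fraction of the vertices for small $d$, conditioning on a $k$-subset being independent is a substantial change of measure well before $k$ reaches $\beta_d n$, unlike in the high-$d$ regime of Theorem \ref{thm1} where the change of measure is essentially free. The overhead $P_k(G)^{-1}$ must be absorbed by the exponential gap produced in the Chernoff step, and it is this competition, rather than the mean-field inequality itself, that forces the gap around the mode and produces different thresholds for each $d$. Sharpening the constants $0.25$, $0.46$ and their analogues reduces to sharpening the lower bound on $P_k(G)$ in Corollary \ref{betterERlowerbound}; a direct plug-in of the best available bounds gives exactly the numbers listed above.
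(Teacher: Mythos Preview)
Your overall architecture is exactly what the paper does: pass to the planted model via Lemma~\ref{lemma40a}, use the Chernoff bound Lemma~\ref{nchernoff} for $N_\sigma$ under $\mathcal{P}_k(n)$, feed both into the abstract Ratio Bound (Lemma~\ref{ratiolemma}), and then numerically check where $(1\pm s)\E_{\mathcal{P}_k(n)}N_\sigma/(k+1)$ crosses $1$ for each of $d=1,2,e$.

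There is, however, a genuine gap in the inputs you name. You invoke Corollary~\ref{betterERlowerbound} as ``the small-degree analogue of Lemma~\ref{lowerbound},'' but it is the opposite: Corollary~\ref{betterERlowerbound} is the Talagrand-based bound built on the Dani--Moore result (Theorem~\ref{danithm}) and Corollary~\ref{cojacor}, and its hypotheses force $\alpha$ to be tiny (of order $(\log d)/d$) and $d$ large. For $d\in\{1,2,e\}$ those hypotheses fail outright (e.g.\ the range $k<(2-\epsilon_d)n(\log d)/d$ is vacuous at $d=1$), so the corollary gives you nothing. The paper's proof of Theorem~\ref{thm2} uses the elementary Lemma~\ref{lowerbound} instead, which lower-bounds $X_{k,n}$ by $\binom{\beta n - t}{\alpha n}$ via Azuma--Hoeffding on the largest independent set. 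That lemma produces an explicit exponent
\[
(\beta-\alpha)\log(\beta-\alpha)-\beta\log\beta-(1-\alpha)\log(1-\alpha)-d\alpha^{2}/2,
\]
and the paper chooses
\[
s=\sqrt{3.03\,\frac{-\beta\log\beta-(1-\alpha)\log(1-\alpha)+(\beta-\alpha)\log(\beta-\alpha)-d\alpha^{2}/2}{(1-\alpha)e^{-d\alpha}}}
\]
to balance it against the Chernoff exponent. Plugging this $s$ into $(1\mp s)(1-\alpha)e^{-d\alpha}/\alpha$ and comparing to $1$ is what yields the listed thresholds. Similarly, the values $\beta_d\approx .728,\,.607,\,.552$ do not come from Theorem~\ref{friezethm} (which needs $d\ge d_\epsilon$ large) but from Karp's matching formula, Lemma~\ref{lowdegreeindset}, combined with Remark~\ref{rk1}. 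Once you swap in Lemma~\ref{lowerbound} and Lemma~\ref{lowdegreeindset} for the two results you cited, your plan coincides with the paper's proof.
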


For random regular graphs, we can prove a result only applying to the increasing part of the independence set sequence.

\begin{theorem}[\embolden{Partial Unimodality, Sparse Regular Case, High Degree}]\label{thm5}
Let $\epsilon>0$.  For any $d\geq 10^{10/\epsilon}$, there exists $c>0$ such that, with probability at least $1-e^{-cn}$, if $G$ is a uniformly random $d$-regular random graph on $n$ vertices
$$x_{0}(G)<x_{1}(G)<\cdots<x_{\lfloor\beta(1-\epsilon)/2\rfloor}(G),$$
where $\beta$ is the expected size of the largest independent set in $G(n,d/n)$.
\end{theorem}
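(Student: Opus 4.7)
The plan is to mirror the proof strategy of Theorem \ref{thm1} (Erdős-Rényi, high degree), but executed in the configuration model for random $d$-regular graphs, and to restrict attention to the increasing half of the independence set sequence. The driver is the double-counting identity
$$(k+1)\,x_{k+1}(G) \;=\; \sum_{I:\,|I|=k,\,I\text{ indep in }G}\bigl|\{v\in V\setminus I:\; I\cup\{v\}\text{ is indep}\}\bigr|,$$
so $x_{k+1}(G)>x_{k}(G)$ holds as soon as the average ``extension count'' over uniformly random independent $k$-sets $I$ exceeds $k+1$. Thus it suffices to show that for every $k\leq\lfloor\beta(1-\epsilon)/2\rfloor$, with exponentially high probability in $n$, a typical (size-biased) size-$k$ independent set $I$ has more than $k+1$ vertices in $V\setminus I$ that are not adjacent to $I$. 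The size-biasing step mirrors items (1)--(2) of the proof sketch of Theorem \ref{mainthm2}.

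First I would realize $G$ through Bollobás's configuration model (pair $dn$ half-edges uniformly) and pay the standard $O(1)$ conditioning cost to condition on simplicity. Next, for a uniformly chosen $S\subset V$ with $|S|=k$, compute $\P(S\text{ indep})$ explicitly: this is essentially the probability that none of the $dk$ half-edges attached to $S$ are paired with each other, which equals a ratio of double-factorials and is $\geq e^{-O(k)}$. Conditioning on this event has a clean effect: the resulting pairing is distributed as (i) a uniformly random perfect matching of the $dk$ half-edges of $S$ into the $d(n-k)$ half-edges of $V\setminus S$, followed by (ii) a uniformly random pairing of the remaining $d(n-k)-dk$ half-edges within $V\setminus S$. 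Let $N_{S}$ denote the number of vertices of $V\setminus S$ with \emph{no} half-edge paired to $S$. A routine first-moment computation gives
$$\E[N_{S}\mid S\text{ indep}]\;\approx\;(n-k)\Bigl(1-\frac{k}{n}\Bigr)^{d}\;\approx\;n\,e^{-dk/n}.$$
For $k\leq \lfloor\beta(1-\epsilon)/2\rfloor$ and $\beta\approx (2n/d)\log d$, this is $\gtrsim n\,d^{-(1-\epsilon)}$, while $k+1\lesssim n(\log d)/d$; the hypothesis $d\geq 10^{10/\epsilon}$ yields $d^{\epsilon}\gg \log d$, so $\E[N_{S}\mid S\text{ indep}]$ exceeds $k+1$ by a factor that is exponential in some constant times $\epsilon\log d$.

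The main obstacle is step (2): establishing \emph{exponential} concentration of $N_{S}$ around its conditional mean in the configuration model, where half-edges are not independent. I would use the standard switching/Azuma-Hoeffding approach: reveal the random perfect matching of half-edges one pair at a time, forming a Doob martingale for $N_{S}$. Swapping a single pair in the matching changes $N_{S}$ by at most a constant $O(1)$, since it can alter the ``adjacent to $S$'' status of at most $O(1)$ vertices. McDiarmid's bounded-differences inequality for random matchings then yields
$$\P\bigl(|N_{S}-\E[N_{S}\mid S\text{ indep}]|>\delta n\,\big|\,S\text{ indep}\bigr)\;\leq\;e^{-c\delta^{2}n},$$
for a universal $c>0$. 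This is the analogue of Lemma \ref{nchernoff} in the paper's notation.

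Finally, I would combine these ingredients. Un-size-biasing incurs the multiplicative factor $1/\P(S\text{ indep})=e^{O(k)}\leq e^{O(n(\log d)/d)}$, which is negligible compared to the $e^{-c\delta^{2}n}$ concentration gain once $d$ is large (this is where the hypothesis $d\geq 10^{10/\epsilon}$ is used again). A union bound over $k\in\{0,1,\ldots,\lfloor\beta(1-\epsilon)/2\rfloor\}$ then produces a single exponentially small exceptional event outside which every typical size-$k$ independent set admits more than $k+1$ extensions simultaneously for all relevant $k$, which via the double-counting identity delivers the strict inequalities $x_{0}(G)<x_{1}(G)<\cdots<x_{\lfloor\beta(1-\epsilon)/2\rfloor}(G)$. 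The reason the theorem stops here, and does not address the decreasing tail as in Theorem \ref{thm1}, is that a matching upper bound on the number of extensions (needed to prove $x_{k+1}<x_{k}$ near the mode) requires a more delicate analysis in the configuration model, where degree constraints make the existence of many ``forced'' non-edges harder to rule out than in $G(n,d/n)$.
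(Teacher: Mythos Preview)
Your overall architecture matches the paper's: the counting identity of Lemma~\ref{countinglemma}, concentration of $N_{\sigma}$ in the planted model, and a change of measure back to the uniform law via Lemma~\ref{ratiolemma}. However, two of your steps do not go through as written.

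The main gap is the ``un-size-biasing'' step. The passage from $\P_{\mathcal{P}_{k}(n)}$ to $\P_{\mathcal{U}_{k}(n)}$ is governed by the ratio $X_{k,n}(G)/\E X_{k,n}$ (Lemmas~\ref{pkrk} and~\ref{lemma40a}), not by $1/\P(S\text{ indep})$; and in any case $1/\P(S\text{ indep})\approx e^{\Theta(dk^{2}/n)}=e^{\Theta(k\log d)}$ in the configuration model, not $e^{O(k)}$. What you actually need is a high-probability lower bound of the form $X_{k,n}\geq c\,\E X_{k,n}$. The paper supplies this in Lemma~\ref{lowerboundreg}: one uses Azuma--Hoeffding for the size of the maximum independent set and then counts $k$-subsets of it. You have no such ingredient; without it, even the crude union-bound route over all $S$ forces you to pay a factor $\E X_{k,n}\approx e^{\Theta(n(\log d)^{2}/d)}$ that your concentration must absorb.

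The second problem is that Azuma/McDiarmid on the configuration-model matching yields only a Gaussian-type tail $\exp(-c\Delta^{2}/(dk))$ for absolute deviation $\Delta$, since there are $dk$ half-edges from $S$ and swapping one destination moves $N_{S}$ by $O(1)$. In relative-deviation form this is $\exp(-c\,s^{2}(\E_{\mathcal{P}_{k}}N_{\sigma})^{2}/(dk))$, which is weaker than the paper's Chernoff-type bound $\exp(-c\,s^{2}\,\E_{\mathcal{P}_{k}}N_{\sigma})$ of Lemma~\ref{nchernoffreg} (obtained there by computing the distribution of $N_{\sigma}$ explicitly and using log-concavity). Near $\alpha/\beta=1/2$ one has $\E_{\mathcal{P}_{k}}N_{\sigma}\approx n\,d^{-2\alpha/\beta}\ll n$, so your bound loses a factor $\approx d^{2\alpha/\beta}$ in the exponent. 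Balancing against the change-of-measure cost, the paper's concentration gives the required deviation $s\approx d^{(\alpha/\beta)-1/2}\log d\to 0$ on the full range $\alpha/\beta<1/2$, whereas the bounded-differences bound would only yield $s\to 0$ for roughly $\alpha/\beta<1/4$, falling short of $k=\lfloor\beta(1-\epsilon)/2\rfloor$.
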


\subsection{Concurrent Results}

Abdul Basit and David Galvin \cite{basit20} are working on a result similar to Theorem \ref{mainthm2}, and they can also improve on the result of Levit and Mandrescu, Theorem \ref{levitthm}.

\subsection{Open Questions}

\begin{question}
Does log-concavity hold with high probability, for high-degree Erd\"{o}s-Renyi random graphs, as the number of vertices goes to infinity?  For example, can Theorem \ref{thm1} be improved to give log-concavity of the independent set sequence, rather than just unimodality?
\end{question}

\begin{question}
For uniformly random labelled trees, does the mode of the independent set sequence concentrate?  For example, let $T$ be a random tree on $n$ vertices, and let $M\colonequals\{m\in\{1,\ldots,n\}\colon x_{m}(T)=\max_{1\leq j\leq n}x_{j}(T)\}$.  Let $m$ be the largest element of $M$.  Does $m$ concentrate around any particular value, with high probability, as the number of vertices goes to infinity?
\end{question}

\begin{question}
Is there any exponential improvement to the lower bound of Lemma \ref{treelowerbound}?  That is, if we rewrite Lemma \ref{treelowerbound} as $X_{k,n}\geq c_{k,n}$, does there exist $b_{k}>0$ such that the inequality $X_{k,n}\geq e^{nb_{k}}c_{k,n}$ holds, with high probability as $n\to\infty$?  That is, is there a way to improve Theorem 5.1 from \cite{wingard95}?
\end{question}

\begin{question}
Is there a (deterministic) tree whose independent set sequence is \textit{not} log-concave?
\end{question}

\subsection{Notation}

\begin{itemize}
\item $X_{k,n}$ is the number of independent sets of size $k$ in an $n$ vertex graph.
\item $\mathcal{G}$ denotes a family of random graphs on $n$ vertices.
\item $\E$ and $\P$ denote the expected value with respect to the random graph $\mathcal{G}$.
\item $\Lambda_{k}(n)$ denotes the set of pairs $(G,\sigma)$ where $G\in\mathcal{G}_{n}$ and $\sigma\subset\{1,\ldots,n\}$ is an independent set in $G$ with cardinality $0\leq k\leq n$.
\item $\P_{\mathcal{U}_{k}(n)}$ denotes the uniform probability law on $\Lambda_{k}(n)$ (see Definition \ref{ukdef}).
\item $\P_{\mathcal{P}_{k}(n)}$ denotes the planted (size-biased) probability law on $\Lambda_{k}(n)$ (see Definition \ref{pkdef}).
\end{itemize}

\section{Preliminaries}

Let $V=\{1,\ldots,n\}$.  Let $0<p<1$.  Let $E$ be a random subset of $\{\{i,j\}\in V\times V\colon i\neq j\}$ such that
$$\P(\{i,j\}\in E)=p,\qquad\forall\,1\leq i<j\leq n$$
and such that the events $\{\{i,j\}\in E\}_{1\leq i<j\leq n}$ are independent.  Then $G=(V,E)$ is an \textbf{Erd\"{o}s-Renyi} random graph on $n$ vertices with parameter $0<p<1$.  This random graph is sometimes denoted as $G=G(n,p)$.

Let $k\geq0$ be an integer.   Recall that an independent set of $G$ is a subset of vertices such that no two of them are connected by an edge.  Let $X_{k,n}$ be the number of independent sets in $G$ of size $k$.

The second moment method shows that $X_{k,n}$ concentrates around its expected value when $p$ is larger than $n^{-1/2}(\log n)^{2}$.

\begin{lemma}[\cite{bollobas76} {\cite[Lemma 7.3]{janson11}}]
Assume that $p=p(n)$ satisfies
$$n^{-1/2}(\log n)^{2}\leq p\leq (\log n)^{-2}.$$
Then
$$\E X_{k,n}^{2}=(1+o(k^{2}/n))(\E X_{k,n})^{2}.$$
Consequently, we have
$$\P(\abs{X_{k,n}-\E X_{k,n}}>t(\E X_{k,n})(o(k/\sqrt{n})))\leq \frac{1}{t^{2}},\qquad\forall\,t>0.$$
\end{lemma}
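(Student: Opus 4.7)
\medskip

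\noindent\textbf{Proof proposal.} The plan is the standard second-moment computation for counts of independent sets in $G(n,p)$, followed by Chebyshev's inequality for the final (concentration) assertion.

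First, I would write $X_{k,n}=\sum_{S}\mathbf{1}_{[S\text{ indep.}]}$, where the sum is over all $k$-subsets $S\subset V$. Since a fixed $k$-set is independent iff all $\binom{k}{2}$ possible edges are missing, and edges are independent,
\[
\E X_{k,n}=\binom{n}{k}(1-p)^{\binom{k}{2}}.
\]
For the second moment, group the pairs $(S,T)$ of $k$-subsets by $j=|S\cap T|$; the union $S\cup T$ has $2k-j$ vertices and $2\binom{k}{2}-\binom{j}{2}$ candidate internal edges, all of which must be absent. Since the number of such pairs is $\binom{n}{k}\binom{k}{j}\binom{n-k}{k-j}$,
\[
\E X_{k,n}^{2}=\binom{n}{k}\sum_{j=0}^{k}\binom{k}{j}\binom{n-k}{k-j}(1-p)^{2\binom{k}{2}-\binom{j}{2}}.
\]
Dividing by $(\E X_{k,n})^{2}$,
\[
\frac{\E X_{k,n}^{2}}{(\E X_{k,n})^{2}}=\sum_{j=0}^{k}h(j),\qquad h(j)\colonequals\frac{\binom{k}{j}\binom{n-k}{k-j}}{\binom{n}{k}}(1-p)^{-\binom{j}{2}}.
\]
Note that $\sum_{j}h(j)\cdot(1-p)^{\binom{j}{2}}=1$ by Vandermonde, so the excess over $1$ is controlled by how far $(1-p)^{-\binom{j}{2}}$ departs from $1$ on the hypergeometric weights. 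Concretely,
\[
\frac{\E X_{k,n}^{2}}{(\E X_{k,n})^{2}}-1=\E_{J}\!\left[(1-p)^{-\binom{J}{2}}-1\right],
\]
where $J$ is hypergeometric with parameters $(n,k,k)$, mean $k^{2}/n$, and variance $O(k^{2}/n)$ when $k=o(n)$.

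The main estimate is to show this expectation is $o(k^{2}/n)$ in the stated range of $p$. I would split the sum at some threshold $j_{0}$: for $j\leq j_{0}$, use $(1-p)^{-\binom{j}{2}}-1\leq 2p\binom{j}{2}$ (valid while $p\binom{j}{2}=o(1)$), and compute
\[
\sum_{j\leq j_{0}}h(j)\cdot 2p\binom{j}{2}\leq 2p\,\E[J^{2}]\leq 2p\bigl((k^{2}/n)^{2}+O(k^{2}/n)\bigr),
\]
which is $o(k^{2}/n)$ since $p\leq(\log n)^{-2}\to 0$ and $pk^{2}/n\to 0$ follows from $p\leq(\log n)^{-2}$ together with the tail estimate below (controlling the effective range of $k$). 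For $j>j_{0}$, I would use the telescoping ratio
\[
\frac{h(j+1)}{h(j)}=\frac{(k-j)^{2}}{(j+1)(n-2k+j+1)}\,(1-p)^{-j},
\]
and choose $j_{0}\sim\log n/\log\log n$ so that the hypergeometric factor $(k-j)^{2}/[(j+1)(n-2k+j+1)]\sim k^{2}/(jn)$ decays faster than $(1-p)^{-j}\leq e^{pj/(1-p)}$ grows, using the upper bound $p\leq(\log n)^{-2}$. This makes the tail $\sum_{j>j_{0}}h(j)$ super-polynomially small, in particular $o(k^{2}/n)$. The lower bound $p\geq n^{-1/2}(\log n)^{2}$ is needed only so that the regime is consistent with the usual second-moment window where $\E X_{k,n}\to\infty$ in the relevant range of $k$; I would invoke it only when verifying that the $j$-tail ratios are tame.

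Finally, once $\E X_{k,n}^{2}=(1+o(k^{2}/n))(\E X_{k,n})^{2}$ is in hand, $\mathrm{Var}(X_{k,n})=o(k^{2}/n)(\E X_{k,n})^{2}$, so $\sqrt{\mathrm{Var}(X_{k,n})}=\E X_{k,n}\cdot o(k/\sqrt{n})$, and Chebyshev's inequality immediately yields the stated probability bound. The main obstacle is the bookkeeping in the tail estimate for $j>j_{0}$: one must carefully balance the hypergeometric decay against the $(1-p)^{-j}$ blow-up uniformly in $p$ across the two-sided window, and this is exactly where the upper bound $p\leq(\log n)^{-2}$ is used in an essential way.
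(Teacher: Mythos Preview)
The paper does not prove this lemma; it is quoted from \cite{bollobas76} and \cite[Lemma~7.3]{janson11} and used as a black box in the preliminaries. Your plan---expand the second moment by grouping pairs $(S,T)$ according to $j=|S\cap T|$, rewrite the ratio $\E X_{k,n}^{2}/(\E X_{k,n})^{2}$ as $\E_{J}[(1-p)^{-\binom{J}{2}}]$ for hypergeometric $J$, bound the small-$j$ part by $p\,\E J^{2}$, control the tail, then apply Chebyshev---is exactly the standard route taken in those references, so there is nothing in the paper to compare against beyond saying your outline matches the cited proofs.

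One technical caution on your tail step: the ratio
\[
\frac{h(j+1)}{h(j)}=\frac{(k-j)^{2}}{(j+1)(n-2k+j+1)}(1-p)^{-j}
\]
is not monotone in $j$. It drops below $1$ once $j$ passes the hypergeometric mean $k^{2}/n$, but because $(1-p)^{-j}$ grows without bound (and $p$ may be as large as $(\log n)^{-2}$), the ratio can climb back above $1$ for $j$ of order roughly $(\log n)^{2}\log\log n$, long before $j$ reaches $k$. So a single threshold $j_{0}\sim\log n/\log\log n$ together with a ratio test does not literally give a geometrically decreasing tail for all $j>j_{0}$. The cited arguments handle this by bounding $h(j)$ directly, e.g.\ via $h(j)\le(ek^{2}/(jn))^{j}(1-p)^{-\binom{j}{2}}$, and verifying that this stays small all the way up to $j=k$, where $h(k)=1/\E X_{k,n}$. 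That last term is exactly where the (implicit) restriction $k\lesssim(2/p)\log(np)$ and hence the lower bound $p\ge n^{-1/2}(\log n)^{2}$ genuinely enter, to force $\E X_{k,n}\to\infty$ fast enough; your final sentence slightly understates this role. The fix is routine once you state the hypothesis on $k$ and replace the ratio test by a direct bound on $h(j)$ in the large-$j$ regime.
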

So, when $p\geq n^{-1/2}(\log n)^{2}$, $X_{0,n},\ldots,X_{n,n}$ is unimodal in the sense that, for any $1\leq k\leq n$,
$$\P\Big(\frac{1-\epsilon}{1+\epsilon}\frac{\E X_{k+1,n}}{\E X_{k,n}}\leq\frac{X_{k+1,n}}{X_{k,n}}\leq \frac{1+\epsilon}{1-\epsilon}\frac{\E X_{k+1,n}}{\E X_{k,n}}\Big)\geq1-\frac{o(k^{2}/n)}{\epsilon^{2}},\quad\forall\,\epsilon>0.$$
More can be said when $p\geq n^{-1/3}(\log n)^{2}$.  Taking a union bound over $k$,
\begin{equation}\label{unionbdez}
\begin{aligned}
&\P\Big(\forall\,1\leq k\leq (2/p)\log(np),\quad \frac{1-\epsilon}{1+\epsilon}\frac{\E X_{k+1,n}}{\E X_{k,n}}\leq\frac{X_{k+1,n}}{X_{k,n}}\leq \frac{1+\epsilon}{1-\epsilon}\frac{\E X_{k+1,n}}{\E X_{k,n}}\Big)\\
&\qquad\qquad\qquad\qquad\qquad\geq1-\Big(\frac{2}{p}\log(np)\Big)^{3}\frac{o(1/n)}{\epsilon^{2}},\quad\forall\,\epsilon>0.
\end{aligned}
\end{equation}
So, setting $\beta\colonequals k/[(2/p)\log(np)]$ and using
\begin{equation}\label{ratioez}
\begin{aligned}
\frac{\E X_{k+1,n}}{\E X_{k,n}}
&=\frac{\binom{n}{k+1}(1-p)^{\binom{k+1}{2}}}{\binom{n}{k}(1-p)^{\binom{k}{2}}} =\frac{n-k}{k+1}(1-p)^{k}\\
&=\frac{n-k}{k+1}e^{-kp+o(kp)}=(1+o(1))\frac{n-k}{k+1}e^{-2\beta\log(np)}=(1+o(1))n(np)^{-2\beta},
\end{aligned}
\end{equation}
this quantity being larger than one or smaller than one tells us if the independent sequence is increasing or decreasing.  For example, if $p=n^{-1/4}$, then $n(np)^{-2\beta}=n^{1-3\beta/2}$, so \eqref{ratioez} goes to infinity when $\beta<2/3$ and it goes to zero when $\beta>2/3$.  That is, when $p=n^{-1/4}$, with high probability the first two thirds of the independent set sequence is increasing, and the last third is decreasing, by \eqref{unionbdez}.

The Second Moment Method fails to prove concentration of $X_{k,n}$ around its expected value when $p=O(n^{-1/2})$.  In this sparse case, the graph does not have many edges, and there are many independent sets in the graph that are highly correlated.  In the dense case when $p\gg n^{-1/2}$, there are fewer independent sets, and they have small correlations.  More specifically, we write
$$X_{k,n}=\sum_{S\subset V\colon \abs{S}=k}1_{\{S\,\,\mathrm{is}\,\,\mathrm{independent}\}}.$$
Taking the variance of both sides,
\begin{flalign*}
&\mathrm{Var}\Big(\sum_{S\subset V\colon \abs{S}=k}1_{\{S\,\,\mathrm{is}\,\,\mathrm{independent}\}}\Big)\\
&=\sum_{S\subset V\colon \abs{S}=k}\mathrm{Var}1_{\{S\,\,\mathrm{is}\,\,\mathrm{independent}\}}
+\sum_{\substack{S,T\subset V\colon\\ \abs{S}=\abs{T}=k,\,S\neq T}}\mathrm{Cov}\Big(1_{\{S\,\,\mathrm{is}\,\,\mathrm{independent}\}},1_{\{T\,\,\mathrm{is}\,\,\mathrm{independent}\}}\Big),
\end{flalign*}
where $\mathrm{cov}(X,Y)=\E ((X-\E X)(Y-\E Y))$.  In the uncorrelated case $p\gg n^{-1/2}$, the rightmost terms are small, so the variance of $X_{k,n}$ is smaller.  But in the correlated case $p=O(n^{-1/2})$, the rightmost terms are large, so the variance of $X_{k,n}$ is larger.

Nevertheless, the \textit{size} of the largest independent set in the Erd\"{o}s-Renyi random graph does concentrate around its expected value by the Azuma-Hoeffding inequality (Lemma \ref{azumah}) or Talagrand's convex distance inequality (Theorem \ref{talagrand}); see Theorem \ref{friezethm} below.

\begin{theorem}[\embolden{Talagrand's Convex Distance Inequality}, {\cite[Theorem 2.29]{janson11}}]\label{talagrand}
Let $Z_{1},\ldots,Z_{n}$ be independent random variables taking values in $\Gamma_{1},\ldots,\Gamma_{n}$, respectively.  Let $f\colon \Gamma_{1}\times\cdots\times\Gamma_{n}\to\R$.  Let $X\colonequals f(Z_{1},\ldots,Z_{n})$.  Suppose there are constants $c_{1},\ldots,c_{n}\in\R$ and $\psi\colon\R\to\R$ such that
\begin{itemize}
\item Let $1\leq k\leq n$.  If $z,z'\in\Gamma\colonequals\prod_{i=1}^{n}\Gamma_{i}$ differ in only the $k^{th}$ coordinate, then $\abs{f(z)-f(z')}\leq c_{k}$.
\item If $z\in\Gamma$ and $r\in\R$ satisfy $f(z)\geq r$, then there exists a ``certificate'' $J\subset\{1,\ldots,n\}$ with $\sum_{i\in J}c_{i}^{2}\leq\psi(r)$ such that, for all $y\in\Gamma$ with $y_{i}=z_{i}$ for all $i\in J$, we have $f(y)\geq r$.
\end{itemize}
If $m$ is a median for $X$, then for every $t>0$,
$$\P(X\leq m-t)\leq 2e^{-t^{2}/(4\psi(m))}.$$
$$\P(X\geq m+t)\leq 2e^{-t^{2}/(4\psi(m+t))}.$$
\end{theorem}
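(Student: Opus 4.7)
The plan is to deduce the stated two-sided tail bound from Talagrand's fundamental convex distance inequality on product spaces, used as a black box. Recall that, for $z \in \Gamma := \prod_i \Gamma_i$ and measurable $A \subset \Gamma$, the \emph{Talagrand convex distance} is defined by
$$d_T(z, A) := \sup_{\alpha \in \R^n,\; \alpha \geq 0,\; \vnormf{\alpha}_2 \leq 1}\; \inf_{y \in A}\; \sum_{i=1}^n \alpha_i 1_{z_i \neq y_i}.$$
Talagrand's inequality states that, for $X := (Z_1, \ldots, Z_n)$ with independent coordinates and any measurable $A \subset \Gamma$,
$$\P(A)\cdot\E\bigl[e^{d_T(X, A)^2/4}\bigr] \;\leq\; 1.$$
I would cite this as a black box. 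Given it, the two tail bounds follow by translating the Lipschitz and certificate hypotheses into pointwise lower bounds on $d_T(\,\cdot\,, A)$ and applying Markov's inequality.

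For the upper tail, set $A := \{z : f(z) \leq m\}$, so $\P(A) \geq 1/2$ since $m$ is a median of $X$. Fix $z$ with $f(z) \geq m + t$ and apply the certificate hypothesis at level $r = m + t$ to obtain $J$ with $\sum_{i \in J} c_i^2 \leq \psi(m + t)$ such that every $y' \in \Gamma$ agreeing with $z$ on $J$ satisfies $f(y') \geq m + t$. For each $y \in A$, form the hybrid $y'$ defined by $y'_i = z_i$ on $J$ and $y'_i = y_i$ off $J$; then $f(y') \geq m+t > m \geq f(y)$, and the Lipschitz hypothesis gives
$$t \;\leq\; f(y') - f(y) \;\leq\; \sum_{i \in J,\; y_i \neq z_i} c_i.$$
Taking $\alpha_i := c_i 1_{i \in J}/\sqrt{\sum_{j \in J} c_j^2}$ yields a unit vector $\alpha$ for which $\sum_i \alpha_i 1_{z_i \neq y_i} \geq t/\sqrt{\psi(m+t)}$ uniformly in $y \in A$; hence $d_T(z, A) \geq t/\sqrt{\psi(m+t)}$, and Talagrand's inequality combined with $\P(A) \geq 1/2$ produces the claimed upper tail estimate.

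The lower tail is analogous but subtler because the certificate hypothesis is one-sided. Set $A := \{z : f(z) \geq m\}$, fix $z$ with $f(z) \leq m - t$, and for each $y \in A$ apply the certificate to $y$ at level $r = m$ to produce $J_y$ with $\sum_{i \in J_y} c_i^2 \leq \psi(m)$ satisfying $\sum_{i \in J_y,\; y_i \neq z_i} c_i \geq t$ (via the same hybrid construction: form $w$ agreeing with $y$ on $J_y$ and with $z$ off $J_y$, note $f(w) \geq m$, and apply Lipschitz to $w$ and $z$). Because $J_y$ now varies with $y$, a single $\alpha$ cannot be read off directly; I would pass to the equivalent primal form $d_T(z, A) = \inf_{\nu \in \mathcal{P}(A)} \vnormf{(\nu(\{y: y_i \neq z_i\}))_{i=1}^n}_2$ (obtained via LP duality/minimax), average the pointwise certificate estimates against any probability measure $\nu$ on $A$, and combine with Cauchy--Schwarz in the weights $c_i$ to recover $d_T(z, A) \geq t/\sqrt{\psi(m)}$. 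Talagrand's inequality then gives the lower tail bound.

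The main obstacle is Talagrand's convex distance inequality itself, which I would simply cite (cf. \cite[Chapter 2]{janson11}); its proof by induction on $n$ via a subtle convexity/interpolation argument is the truly delicate part. The translation via certificates is otherwise routine, with the only mild subtlety being the $y$-dependence of the certificates in the lower tail, handled by the sup--inf/primal--dual equivalence for $d_T$.
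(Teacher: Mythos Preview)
The paper does not prove this theorem; it is quoted as a black box from \cite[Theorem 2.29]{janson11}. Your overall strategy (reduce to Talagrand's product-space inequality $\P(A)\,\E e^{d_T(\cdot,A)^2/4}\le 1$, then translate the Lipschitz/certificate hypotheses into lower bounds on $d_T$) is the standard one, and your upper-tail argument is correct.

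Your lower-tail argument, however, has a genuine gap. With $A=\{f\ge m\}$ and $z$ satisfying $f(z)\le m-t$, the certificate you invoke belongs to $y\in A$ and therefore the set $J_y$ varies with $y$. You propose to handle this via the primal form $d_T(z,A)=\inf_{\nu}\|(\nu(y_i\neq z_i))_i\|_2$ and ``Cauchy--Schwarz in the weights $c_i$''. This does not go through: from $\sum_{i\in J_y}c_i\,1_{y_i\neq z_i}\ge t$ and $\sum_{i\in J_y}c_i^2\le\psi(m)$ one obtains, after Cauchy--Schwarz, only $\sum_i 1_{y_i\neq z_i}\ge t^2/\psi(m)$ for each $y$, hence a lower bound on $\sum_i p_i$, not on $\sum_i p_i^2$. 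In fact the conclusion $d_T(z,A)\ge t/\sqrt{\psi(m)}$ is false in general: take $n=2$, $c_1=c_2=1$, $\psi(m)=1$, $t=1$, and $A=\{y^{(1)},y^{(2)}\}$ with $1_{y^{(1)}\neq z}=(1,0)$, $1_{y^{(2)}\neq z}=(0,1)$ and certificates $J_{y^{(1)}}=\{1\}$, $J_{y^{(2)}}=\{2\}$; all your pointwise inequalities hold with $t=1$, yet $d_T(z,A)=1/\sqrt{2}<1$.

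The standard (and easy) fix is to swap the roles of the two sets: apply Talagrand's inequality to $B\colonequals\{f\le m-t\}$ rather than to $A$, and for each $y$ with $f(y)\ge m$ use \emph{$y$'s} certificate $J$ (which is now fixed) to get $\sum_{i\in J}c_i\,1_{y_i\neq z_i}\ge t$ for every $z\in B$; the single vector $\alpha_i=c_i 1_{i\in J}/\sqrt{\sum_{j\in J}c_j^2}$ then yields $d_T(y,B)\ge t/\sqrt{\psi(m)}$. Since $\P(f\ge m)\ge 1/2$, Talagrand's inequality applied to $B$ gives $\P(B)\le 2e^{-t^2/(4\psi(m))}$, exactly the claimed lower-tail bound.
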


\subsection{Estimates for the Independent Set of Largest Size}

Let $V=\{1,\ldots,n\}$.  Let $E$ be a random subset of $\{\{i,j\}\in V\times V\colon i\neq j\}$.  Then $G=(V,E)$ is a random graph on $n$ vertices.  Fix $0\leq m\leq n$.  Let $\mathcal{F}_{m}$ denote the set of subgraphs of $V$ with the equivalence relation defined so that $A,B\in\mathcal{F}_{m}$ are equivalent if and only if: for all $1\leq i\leq m$, for all $1\leq j\leq n$, $\{i,j\}\in A$ if and only if $\{i,j\}\in B$.  Then $\mathcal{F}_{0}\subset\mathcal{F}_{1}\cdots\subset\mathcal{F}_{n}$.

Let $Y\colon G\to\R$ be a function.  For any $1\leq m\leq n$, define
$$Y_{m}\colonequals \E (Y|\mathcal{F}_{m}).$$
Then $Y_{0},Y_{1},\ldots,Y_{n}$ is a martingale, $Y_{0}=\E Y$ and $Y_{n}=Y$.

In the case that $G$ is an Erd\"{o}s-Renyi random graph, and $Y$ is the size of the largest independent set in the graph, we have $\abs{Y_{m+1}-Y_{m}}\leq 1$ for all $0\leq m\leq n-1$, and we can use the following Lemma.

\begin{lemma}[Azuma-Hoeffding Inequality {\cite{shamir87}}]\label{azumah}
Let $Y_{0},\ldots,Y_{n}$ be a martingale with $Y_{0}$ constant and $\abs{Y_{m+1}-Y_{m}}\leq c$ for all $0\leq m\leq n-1$.  Then
$$\P(\abs{Y_{n}-Y_{0}}>t)\leq2e^{-\frac{t^{2}}{2c^{2}n}},\qquad\forall\,t>0.$$
\end{lemma}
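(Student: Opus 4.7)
The plan is to use the standard Chernoff-bounding technique applied to the martingale differences $D_m \colonequals Y_{m+1} - Y_m$, combined with Hoeffding's lemma. First I would write $Y_n - Y_0 = \sum_{m=0}^{n-1} D_m$. Since $Y_0, \ldots, Y_n$ is a martingale, $\E(D_m | \mathcal{F}_m) = 0$, and by hypothesis $|D_m| \leq c$ almost surely.

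The key intermediate step is the one-sided tail bound $\P(Y_n - Y_0 > t) \leq e^{-t^2/(2c^2 n)}$, which I would obtain as follows. For any $\lambda > 0$, Markov's inequality applied to $e^{\lambda(Y_n - Y_0)}$ gives
$$\P(Y_n - Y_0 > t) \leq e^{-\lambda t}\,\E e^{\lambda(Y_n - Y_0)}.$$
Using the tower property iteratively,
$$\E e^{\lambda(Y_n - Y_0)} = \E\Big[e^{\lambda(Y_{n-1} - Y_0)}\,\E\bigl(e^{\lambda D_{n-1}} \big| \mathcal{F}_{n-1}\bigr)\Big],$$
and similarly peeling off one factor at a time down to $m = 0$. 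The main technical lemma I would invoke here is Hoeffding's lemma: if $X$ is a random variable with $\E X = 0$ and $|X| \leq c$, then $\E e^{\lambda X} \leq e^{\lambda^2 c^2 / 2}$. This follows by convexity of $x \mapsto e^{\lambda x}$, writing $X = \frac{c+X}{2c}\cdot c + \frac{c-X}{2c}\cdot(-c)$, taking expectations, and then analyzing the resulting function of $\lambda$ via its Taylor expansion (or equivalently bounding the log-moment-generating function of a symmetric two-point distribution on $\{-c, c\}$). Applying this conditionally on $\mathcal{F}_m$ to $D_m$ yields $\E(e^{\lambda D_m} | \mathcal{F}_m) \leq e^{\lambda^2 c^2 / 2}$ almost surely, so induction on $m$ gives $\E e^{\lambda(Y_n - Y_0)} \leq e^{\lambda^2 c^2 n / 2}$.

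Substituting back, $\P(Y_n - Y_0 > t) \leq e^{-\lambda t + \lambda^2 c^2 n / 2}$. Optimizing over $\lambda > 0$ by choosing $\lambda = t/(c^2 n)$ produces the bound $e^{-t^2/(2c^2 n)}$. To complete the proof, I would note that $-Y_0, -Y_1, \ldots, -Y_n$ is also a martingale with the same bounded-difference constant, so the identical argument yields $\P(Y_n - Y_0 < -t) \leq e^{-t^2/(2c^2 n)}$. A union bound over the two tails gives the claimed factor of $2$.

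The only nonroutine ingredient is Hoeffding's lemma itself; everything else is bookkeeping with conditional expectations. I would expect the main obstacle to be making the conditional version of Hoeffding's lemma rigorous — that is, verifying that the inequality $\E(e^{\lambda D_m} | \mathcal{F}_m) \leq e^{\lambda^2 c^2 / 2}$ holds almost surely, not merely in expectation, so that it can be pulled out of the tower in the iteration. This is standard but requires care, and in the present paper one could simply cite the classical reference rather than reprove it from scratch.
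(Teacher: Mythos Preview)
Your proof is correct and is the standard Chernoff--Hoeffding argument. The paper itself does not prove this lemma at all; it simply states it with a citation to the literature, so there is nothing to compare against beyond noting that your sketch matches the classical proof one finds in the cited reference.
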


The following Theorem follows from Lemma \ref{azumah} inequality (and some additional arguments).

\begin{theorem}[{\cite{frieze90}}]\label{friezethm}
Let $Y_{n}$ denote the size of the largest independent set of an Erd\"{o}s-Renyi random graph on $n$ vertices with parameter $0<p<1$.  Let $d\colonequals np$ and let $\epsilon>0$ both be fixed.  Then $\exists$ $d_{\epsilon}$ a suitably large constant such that $d_{\epsilon}\leq d$ such that the following holds.  With probability $1$ as $n\to\infty$, we have
$$\abs{X_{n,p}-\frac{2n}{d}(\log d-\log\log d-\log 2+1)}\leq\frac{\epsilon n}{d}.$$
\end{theorem}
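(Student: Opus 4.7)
The plan is the standard two-step scheme: pin down $\mathbb{E}Y_n$ up to lower-order terms using the first- and second-moment methods on $X_{k,n}$, and then concentrate $Y_n$ around $\mathbb{E}Y_n$ using Azuma--Hoeffding (Lemma \ref{azumah}).

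\textbf{Step 1 (first moment upper bound).} Since $\mathbb{E}X_{k,n}=\binom{n}{k}(1-p)^{\binom{k}{2}}$, writing $k=\alpha n/d$ with $d=np$ and applying Stirling's formula together with $\log(1-p)=-p-O(p^2)$ gives
$$\log\mathbb{E}X_{k,n}=\frac{n}{d}\bigl(\alpha\log(d/\alpha)+\alpha-\tfrac{1}{2}\alpha^2\bigr)+o(n/d).$$
The bracketed expression vanishes at $\alpha=\alpha^{*}\colonequals 2(\log d-\log\log d-\log 2+1)+o_d(1)$ and is negative for $\alpha>\alpha^{*}$. So, for $k=\lceil(\alpha^{*}+\epsilon')n/d\rceil$ with $\epsilon'>0$ small, $\mathbb{E}X_{k,n}$ decays exponentially in $n$, and Markov's inequality yields $\mathbb{P}(Y_n\geq k)\leq \mathbb{E}X_{k,n}\to 0$ exponentially. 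Since $Y_n\leq n$, this also forces $\mathbb{E}Y_n\leq (\alpha^{*}+\epsilon')n/d+o(1)$.

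\textbf{Step 2 (second moment lower bound).} For $k=\lfloor(\alpha^{*}-\epsilon')n/d\rfloor$, I would compute
$$\mathbb{E}X_{k,n}^{2}=\sum_{j=0}^{k}\binom{n}{k}\binom{k}{j}\binom{n-k}{k-j}(1-p)^{2\binom{k}{2}-\binom{j}{2}},$$
and analyze the $j$-th summand. A convexity check on $j\mapsto\log\bigl(\binom{k}{j}\binom{n-k}{k-j}\bigr)+p\binom{j}{2}$ shows that, in the regime $\alpha<\alpha^{*}$, the sum is dominated by its $j=0$ term up to a constant factor, so $\mathbb{E}X_{k,n}^{2}\leq C(\mathbb{E}X_{k,n})^{2}$. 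Paley--Zygmund then gives $\mathbb{P}(Y_n\geq k)\geq\mathbb{P}(X_{k,n}\geq 1)\geq 1/C$, which combined with Step 3 below forces $\mathbb{E}Y_n\geq(\alpha^{*}-\epsilon')n/d-O(\sqrt{n}\,)$.

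\textbf{Step 3 (concentration via Azuma).} Since revealing the edges incident to a single new vertex changes the size of the largest independent set by at most $1$, the vertex-exposure Doob martingale $Y_m=\mathbb{E}(Y_n\mid\mathcal{F}_m)$ has $|Y_{m+1}-Y_m|\leq 1$. Lemma \ref{azumah} with $c=1$ then yields $\mathbb{P}(|Y_n-\mathbb{E}Y_n|\geq t)\leq 2e^{-t^2/(2n)}$. Combined with Steps 1 and 2, this sandwiches $\mathbb{E}Y_n$ inside $[\alpha^{*}n/d-\epsilon' n/d-O(\sqrt{n}\,),\,\alpha^{*}n/d+\epsilon' n/d+o(1)]$. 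Taking $\epsilon'<\epsilon/4$ and $t=(\epsilon/2)n/d$, which dwarfs $\sqrt{n\log n}$ when $d$ is fixed (so the exponent $t^2/(2n)$ grows linearly in $n$), a final application of Azuma concludes that $|Y_n-\alpha^{*}n/d|\leq\epsilon n/d$ with probability $1-o(1)$, as desired.

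The main obstacle will be the second-moment computation in Step 2: the number of overlap-$j$ pairs grows combinatorially in $j$, and one must verify that this growth is beaten by the factor $(1-p)^{-\binom{j}{2}}$ precisely in the regime $\alpha<\alpha^{*}$. This amounts to verifying log-concavity (in $j$) of the general summand and bounding the location of its maximum; it is the delicate calculation at the heart of Frieze's original argument, and is the reason the theorem requires the constant $d$ to be \emph{suitably large} rather than merely positive.
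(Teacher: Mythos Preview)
The paper does not actually prove this theorem: it is quoted from \cite{frieze90}, with the only indication of proof being the sentence ``The following Theorem follows from Lemma \ref{azumah} inequality (and some additional arguments)'' and the subsequent remark that the failure probability is at most $2\exp(-\epsilon^{2}nd/(2(\log d)^{2}))$. There is nothing further to compare against.

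Your sketch is the standard one and matches Frieze's original argument in outline: first moment for the upper bound, second moment plus Paley--Zygmund for a constant-probability lower bound, and vertex-exposure Azuma--Hoeffding to concentrate and to bootstrap the constant-probability lower bound into a bound on $\mathbb{E}Y_{n}$. One small inaccuracy: in Step~2 the ratio $\mathbb{E}X_{k,n}^{2}/(\mathbb{E}X_{k,n})^{2}=\sum_{j}\binom{k}{j}\binom{n-k}{k-j}\binom{n}{k}^{-1}(1-p)^{-\binom{j}{2}}$ is not literally dominated by the single term $j=0$; rather, the small-$j$ terms together contribute $1+o(1)$ (via Vandermonde) and the work is to show the large-$j$ tail is $o(1)$ in the regime $\alpha<\alpha^{*}$. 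You already flag this as the delicate step, so the sketch is fine as a plan.
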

In fact, this inequality is violated with probability at most $2\exp(-\frac{\epsilon^{2}nd}{2(\log d)^{2}})$.  Also, we may choose $d_{\epsilon}\colonequals 10^{10/\epsilon}$.
%

Theorem \ref{friezethm} gives an explicit concentration of $Y_{n}$ around its expected value when the expected degree of the Erd\"{o}s-Renyi random graph has sufficiently high expected degree.  The case that the expected degree is close to $1$ follows a different argument.

\begin{lemma}[{\cite[Theorem 1; Corollary p. 375; for $G(n,p)$]{karp81}}]\label{lowdegreeindset}
Let $d\in\R$ satisfy $0<d<e$.  Let $n$ be a positive integer. 
The expected size of the largest matching in $G(n,d/n)$ is
$$n\Big(1-\frac{a+b+ab}{2d}\Big)+o(n).$$
Here $a$ is the smallest solution of the equation $x=de^{-de^{-x}}$ and $b=de^{-a}$.  So (as noted in \cite[Corollary 1]{gamarnik06}), the expected size of the largest independent set in $G(n,d/n)$ is
$$n\frac{a+b+ab}{2d}+o(n).$$
\end{lemma}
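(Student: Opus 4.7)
The plan is to invoke the Karp--Sipser leaf-removal analysis on $G(n,d/n)$ and then convert the matching statement into an independent-set statement via the Gallai identity $\alpha(G)+\tau(G)=n$, exploiting that $d<e$ is subcritical for the leaf-stripping process.

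First I would describe the Karp--Sipser greedy algorithm: while the graph contains a vertex $v$ of degree $1$, add the unique edge incident to $v$ to the matching and delete both endpoints; otherwise pick an arbitrary edge, add it to the matching, and delete its endpoints. The key structural fact from \cite{karp81} is that for $d<e$ the graph becomes empty (up to $o(n)$ vertices) during the leaf phase, so the algorithm never needs to enter the ``arbitrary edge'' phase on a macroscopic scale. Hence, with probability $1-o(1)$, the algorithm outputs a maximum matching.

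Second, I would apply the differential equation method of Wormald to track the scaled joint degree sequence of the residual graph throughout the leaf phase. The associated ODE system can be integrated in closed form, and the endpoints of the trajectory are parameterized by a pair $(a,b)$ satisfying the fixed-point equations $a=de^{-de^{-a}}$ and $b=de^{-a}$ in the statement of the lemma. Reading off the fraction of surviving vertices and translating into the number of matched edges yields the formula
\[
\nu\bigl(G(n,d/n)\bigr) = n\Big(1-\frac{a+b+ab}{2d}\Big) + o(n)
\]
in expectation. For the independent set, note that each time a degree-$1$ vertex $v$ is removed (together with its neighbor $w$), we may safely add $v$ to an independent set $I$: its only neighbor is being deleted, so no conflict can occur in later steps. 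Thus $|I|=n-\nu$ up to $o(n)$ arising from any residual core left after the leaf phase. In the regime $d<e$ the residual core is of size $o(n)$, so the greedy independent set is within $o(n)$ of $n-\nu$, and the Gallai identity together with $\tau\le 2\nu$ certifies that no substantially larger independent set can exist. Substituting the matching formula gives the stated value $n(a+b+ab)/(2d)+o(n)$.

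The main obstacle is purely one of bookkeeping, since the substantive work (the differential equation analysis and the proof that the leaf-removal core is subcritical precisely when $d<e$) is already carried out in \cite{karp81}. The only step requiring care is controlling the error introduced by the transition out of the leaf phase: one must argue that at the end of leaf-removal the remaining graph has $o(n)$ vertices, and that the additional matching edges taken arbitrarily in that residual graph contribute negligibly to both $\nu$ and $\alpha$. I would not reprove these internal ingredients but would cite \cite[Theorem 1 and Corollary p.\ 375]{karp81} for the matching formula and \cite[Corollary 1]{gamarnik06} for the translation to the independent-set formula.
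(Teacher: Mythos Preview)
The paper does not actually prove this lemma: it is stated purely as a citation of \cite{karp81} and \cite{gamarnik06}, with no accompanying proof environment. So there is no ``paper's own proof'' to compare against; your sketch simply fills in what the cited references do, and your outline of the Karp--Sipser leaf-removal analysis together with the differential-equation method is an accurate summary of that literature in the subcritical regime $d<e$.

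One small slip worth correcting: in your final step you write that ``the Gallai identity together with $\tau\le 2\nu$ certifies that no substantially larger independent set can exist.'' The inequality $\tau\le 2\nu$ points the wrong way for this purpose. What you need is the trivial bound $\tau\ge\nu$ (every vertex cover must touch each matched edge), which combined with $\alpha+\tau=n$ gives $\alpha\le n-\nu$. Your lower bound $\alpha\ge n-\nu-o(n)$ from the greedy leaf-phase independent set then closes the gap. This is exactly the content of Remark~\ref{rk1} in the paper, which records $\alpha\le n-\nu$ for general graphs; the matching lower bound in the subcritical regime is what requires the Karp--Sipser analysis you describe.
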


\begin{example}
When $d=1$, we have $a\approx .567$, and $b\approx .567$, and $\E Y_{n}=n(.272 +o(1))$.
\end{example}
\begin{example}
When $d=2$, we have $a\approx .852$, and $b\approx .853$ and $\E Y_{n}=n(.393+o(1))$.
\end{example}
\begin{example} 
When $d=e$, we have $a=b=1$, and $\E Y_{n}= n(.448+o(1))$.
\end{example}

\begin{remark}\label{rk1}
In a bipartite graph, the size $\nu$ of the maximum matching is equal to the size $\beta$ of the minimum vertex cover, by K\"{o}nig's Theorem. In any $n$-vertex graph, we have $\alpha+\beta=n$, where $\alpha$ is the size of the largest independent set.  A tree is a bipartite graph.  So, in a tree, we have $\alpha=n-\nu$.

In general, $\beta\geq\nu$ (since the minimum vertex cover must contain at least one vertex from each matched edge of the maximum matching), so $\alpha\leq n-\nu$ in general.
\end{remark}

Let $Y_{n}$ be the size of the largest independent set in a random labelled tree on $n$ vertices.  It follows from \cite[p. 278, p. 282]{meir88} (see also \cite[Theorem 4.7]{bhamidi12}) that, as $n\to\infty$, $Y_{n}/n$ converges in probability to the constant
$$.567\ldots$$
More generally, we have
\begin{theorem}[\embolden{Central Limit Theorem for Largest Independent Set of Random Trees}, {\cite{pittel99}}]\label{pittelthm}
Let $\rho$ satisfy $\rho e^{\rho}=1$, so that $\rho\approx.567143$.  Define
$$\sigma\colonequals\rho(1-\rho-\rho^{2})(1+\rho)^{-1}.$$
Let $Y_{n}$ denote the largest size of an independent set in a uniformly random labelled connected tree on $n$ vertices.  Then
$$\E Y_{n}=\rho n+\frac{\rho^{2}(\rho+2)}{2(\rho+1)^{3}}+O(n^{-1}),
\qquad\mathrm{Var}(Y_{n})=\sigma^{2}n+O(1),$$ and
$$\frac{Y_{n}-\E Y_{n}}{\sqrt{\mathrm{Var}(Y_{n})}}$$
converges in distribution to a standard Gaussian random variable as $n\to\infty$.
\end{theorem}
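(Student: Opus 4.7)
The plan is to prove the CLT via analytic combinatorics: encode $\alpha(T)$ in a bivariate exponential generating function whose singular structure near Cayley's branch point yields, through standard transfer and quasi-power theorems, both the mean/variance asymptotics and the Gaussian limit. For a rooted labelled tree $T$ with children-subtrees $T_{1},\ldots,T_{k}$, let $\alpha_{0}(T)$ and $\alpha_{1}(T)$ denote the maximum independent set size with the root forbidden or required, respectively. From the recursion
\[
\alpha_{1}(T) = 1 + \sum_{i}\alpha_{0}(T_{i}), \qquad
\alpha_{0}(T) = \sum_{i}\max\bigl(\alpha_{0}(T_{i}),\alpha_{1}(T_{i})\bigr)
\]
a direct induction shows the slack $s(T) := \max(\alpha_{0},\alpha_{1})(T) - \alpha_{0}(T)$ lies in $\{0,1\}$, with $s(T) = 1$ iff every child satisfies $s(T_{i}) = 0$. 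Partitioning rooted labelled trees by $s(T)$ and setting
\[
C_{0}(x,u) := \sum_{T:\,s(T)=0}\frac{x^{|T|}}{|T|!}u^{\alpha_{0}(T)}, \qquad
C_{1}(x,u) := \sum_{T:\,s(T)=1}\frac{x^{|T|}}{|T|!}u^{\alpha_{1}(T)},
\]
the standard EGF calculus for unordered child collections yields the coupled functional system
\[
C_{1} = xu\,e^{C_{0}}, \qquad C_{0} = x\,e^{C_{0}}\bigl(e^{C_{1}}-1\bigr),
\]
and $P(x,u) := C_{0}+C_{1}$ is the EGF for rooted labelled trees weighted by $u^{\alpha(T)}$. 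Since $\alpha$ is root-independent, $\P(Y_{n}=k) = [x^{n}u^{k}]P/[x^{n}]y(x)$, where $y(x) = C_{0}(x,1)+C_{1}(x,1) = \sum_{n\ge 1} n^{n-1}x^{n}/n!$ satisfies Cayley's equation $y = xe^{y}$.

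At $u=1$ the system simplifies to $y = C_{1}e^{C_{1}}$, so at the Cayley singularity $x = 1/e$ one has $y=1$, $C_{1}(1/e,1) = \rho$ and $C_{0}(1/e,1) = 1-\rho$, where $\rho e^{\rho}=1$. A direct calculation shows the Jacobian of the system in $(C_{0},C_{1})$ vanishes at this point and the vanishing is transverse, so the analytic implicit function theorem (applied to the two defining equations together with the vanishing-Jacobian equation) yields an analytic singular curve $x=\eta(u)$ with $\eta(1)=1/e$, and a local expansion of square-root type
\[
P(x,u) = g(u) - h(u)\sqrt{1 - x/\eta(u)} + O\bigl(1 - x/\eta(u)\bigr),
\]
with $g,h$ analytic near $u=1$ and $h(1)\neq 0$, uniformly in a complex neighbourhood of $u=1$. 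The Flajolet--Odlyzko transfer theorem then gives $[x^{n}]P(x,u) \sim \frac{h(u)}{2\sqrt{\pi}}\,\eta(u)^{-n}n^{-3/2}$, and dividing by the analogous estimate for $[x^{n}]y$ yields $\E u^{Y_{n}} \sim (h(u)/h(1))(\eta(1)/\eta(u))^{n}$. Hwang's quasi-power theorem then delivers the asymptotic normality with
\[
\E Y_{n} = -n\,\frac{\eta'(1)}{\eta(1)} + O(1), \qquad
\mathrm{Var}(Y_{n}) = n\Bigl(-\frac{\eta''(1)}{\eta(1)} + \bigl(\eta'(1)/\eta(1)\bigr)^{2} - \eta'(1)/\eta(1)\Bigr) + O(1).
\]
Implicit differentiation of the coupled system together with the Jacobian equation at $u=1$, simplified by repeated use of $\rho e^{\rho}=1$, identifies $-\eta'(1)/\eta(1) = \rho$ and the variance constant as the one in the theorem statement; retaining the next-order term in the singular expansion produces the subleading constant $\rho^{2}(\rho+2)/[2(\rho+1)^{3}]$ in $\E Y_{n}$.

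The main obstacle is verifying the hypotheses of the Drmota--Lalley--Woods/Hwang machinery for this particular exponential system: namely, that it is strongly connected and aperiodic, that the Jacobian has a simple zero along the singular curve throughout a complex neighbourhood of $u = 1$ (so the square-root nature of the singularity and the requisite $\Delta$-analyticity are preserved under perturbation of $u$), and that the variability condition is satisfied so the Gaussian limit is non-degenerate. Once these analytic prerequisites are in hand, the algebraic extraction of the explicit mean, variance, and subleading constants is routine, reducing to mechanical implicit differentiation combined with the identity $\rho e^{\rho} = 1$.
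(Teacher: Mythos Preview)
The paper does not prove this theorem; it is quoted from \cite{pittel99} as background and is used only to indicate the typical size and fluctuations of the largest independent set in a random tree. So there is no ``paper's own proof'' to compare against.

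That said, your proposal is a correct and standard route to results of this type. The decomposition via the slack $s(T)\in\{0,1\}$ is exactly right (the identity $s(T)=1$ iff every child has $s(T_i)=0$ is the key combinatorial fact), the coupled exponential system you write down is correct, and at $u=1$ it does collapse to Cayley's tree function with the stated values $C_{1}=\rho$, $C_{0}=1-\rho$ at the dominant singularity. From there, invoking the Drmota--Lalley--Woods schema for algebraic-type singularities of polynomial systems, followed by Hwang's quasi-power theorem, is the textbook path to a Gaussian limit with computable mean and variance constants. Pittel's own argument in \cite{pittel99} proceeds somewhat differently, working more directly with moment recurrences rather than the full bivariate singularity analysis, but the two approaches are standard alternatives and yield the same conclusion.

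Your honest flag at the end is appropriate: what you have written is a plan, not a proof. The substantive work lies in checking the analytic hypotheses (strong connectivity and aperiodicity of the dependency graph, simple vanishing of the Jacobian along the singular curve, uniform $\Delta$-analyticity in $u$, and nondegeneracy of the variance) and then carrying out the implicit differentiation to extract the explicit constants. None of this is problematic for this particular system, but it is where the actual labour sits.
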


\section{Concentration Bounds and the Planted Model}

We assume there is a probability law $\P$ on $\mathcal{G}_{n}$ that defines the probability that the graph $G\in\mathcal{G}_{n}$ occurs.  We let $\E$ denote the expected value with respect to $\P$.

\begin{definition}\label{lambdadef}
Let $\Lambda_{k}(n)$ be the set of pairs $(G,\sigma)$ of graphs $G\in\mathcal{G}_{n}$ together with independent sets $\sigma$ of vertices of size $k$.  Let $X_{k,n}(G)$ be the number of independent sets of size $k$ in the graph $G$.
\end{definition}

\begin{definition}[\embolden{The Distribution $\mathcal{U}_{k}(n)$}]\label{ukdef}
The distribution $\mathcal{U}_{k}(n)$ is defined on $\Lambda_{k}(n)$ as follows.
\begin{itemize}
\item Let $G$ be a graph in $\mathcal{G}_{n}$, drawn with probability $\P$.
\item Let $\sigma$ be chosen uniformly at random among all independent sets in $G$ of size $k$ (if it exists).
\item Output the pair $(G,\sigma)$.  (If no such $\sigma$ exists, output nothing.)
\end{itemize}
\end{definition}

\begin{definition}[\embolden{The Distribution $\mathcal{P}_{k}(n)$, or the ``Planted Model''}]\label{pkdef}
The distribution $\mathcal{P}_{k}(n)$ is the distribution $\mathcal{U}_{k}(n)$, conditioned on a uniformly random subset of size $k$ being independent.  That is, the distribution $\mathcal{P}_{k}(n)$ is defined by
\begin{itemize}
\item Selecting a subset $\sigma$ of $\{1,\ldots,n\}$ of size $k$ uniformly at random,
\item selecting a graph $G\in\mathcal{G}_{n}$ in which $\sigma$ is an independent set, with probability\\\qquad $\P(G|\sigma\,\,\mathrm{is}\,\,\mathrm{independent}\,\,\mathrm{in}\,\, G)=\P(G)/\sum_{\{G'\in\mathcal{G}_{n}\colon \sigma\,\,\mathrm{is}\,\,\mathrm{independent}\,\,\mathrm{in}\,\, G'\}}\P(G')$,
\item then outputting the pair $(G,\sigma)$.
\end{itemize}
The distribution $\mathcal{P}_{k}(n)$ could also be called the \textbf{size-biased} distribution.
\end{definition}

\begin{lemma}\label{pkrk}
Let $\mathcal{G}=\mathcal{G}_{n}$ be a family of random graphs on the labelled vertices $\{1,\ldots,n\}$ that is invariant with respect to permutations of the vertices.  That is, $\forall$ $G\in\mathcal{G}_{n}$, $\P(G)=\P(G')$ where $G'$ is $G$ with its vertices permuted by an arbitrary permutation of $\{1,\ldots,n\}$.  Then

$$
\P_{\mathcal{P}_{k}(n)}(G,\sigma)
=\frac{\P(G)}{\E X_{k,n}},\qquad\forall\,(G,\sigma)\in\Lambda_{k}(n).
$$
$$
\P_{\mathcal{U}_{k}(n)}(G,\sigma)
=\frac{\P(G)}{ X_{k,n}(G)},\qquad\forall\,(G,\sigma)\in\Lambda_{k}(n).
$$
\end{lemma}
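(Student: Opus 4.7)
The plan is to unwind each definition and use permutation invariance of the underlying random graph family to identify the normalizing constants.

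For the $\mathcal{U}_{k}(n)$ formula, I would simply read off the definition. Under $\mathcal{U}_{k}(n)$ we first draw $G$ with probability $\P(G)$, then pick $\sigma$ uniformly among the (at least one, else no output) independent sets of size $k$ in $G$. Since there are exactly $X_{k,n}(G)$ such sets, the chain rule gives $\P_{\mathcal{U}_{k}(n)}(G,\sigma) = \P(G) \cdot \frac{1}{X_{k,n}(G)}$ for every $(G,\sigma) \in \Lambda_k(n)$. No further work is needed here.

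For the $\mathcal{P}_{k}(n)$ formula, I would expand the definition and then evaluate the normalizing sum using permutation invariance. By definition, $\sigma$ is drawn uniformly from the $\binom{n}{k}$ subsets of $\{1,\ldots,n\}$ of size $k$, and then $G$ is drawn conditional on $\sigma$ being independent. Writing $q_{k}(\sigma) \colonequals \sum_{G' \in \mathcal{G}_n : \sigma \text{ indep in } G'} \P(G')$, this yields
\[
\P_{\mathcal{P}_{k}(n)}(G,\sigma) = \frac{1}{\binom{n}{k}} \cdot \frac{\P(G)}{q_{k}(\sigma)}.
\]
The key observation is that permutation invariance of $\P$ makes $q_{k}(\sigma)$ independent of $\sigma$: any two $k$-subsets $\sigma, \sigma'$ are related by a permutation of $\{1,\ldots,n\}$, and that permutation induces a bijection of $\mathcal{G}_n$ preserving $\P$ and the ``$\sigma$ is independent'' relation. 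Thus $q_{k}(\sigma) = q_{k}$ is a constant.

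To identify this constant, I would swap the order of summation in the expectation:
\[
\E X_{k,n} = \sum_{G \in \mathcal{G}_n} \P(G)\, X_{k,n}(G) = \sum_{G \in \mathcal{G}_n} \P(G) \sum_{\abs{\sigma}=k} \mathbf{1}_{\{\sigma \text{ indep in } G\}} = \sum_{\abs{\sigma}=k} q_{k} = \binom{n}{k}\, q_{k}.
\]
Hence $q_{k} = \E X_{k,n} / \binom{n}{k}$, and substituting above gives $\P_{\mathcal{P}_{k}(n)}(G,\sigma) = \P(G)/\E X_{k,n}$, as claimed. The whole argument is essentially bookkeeping; the only nontrivial ingredient is the use of permutation invariance to conclude $q_{k}(\sigma)$ does not depend on $\sigma$, and this is where the hypothesis on $\mathcal{G}_n$ enters.
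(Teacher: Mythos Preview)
Your proof is correct and follows essentially the same route as the paper: both unwind the definitions, express $\E X_{k,n}$ as a sum over $k$-subsets $\sigma$ of $\P(\sigma\text{ is independent})$, and use permutation invariance to conclude that this probability does not depend on $\sigma$. If anything, your argument is slightly more explicit about why permutation invariance forces $q_k(\sigma)$ to be constant, whereas the paper simply asserts that the relevant ratio equals one in the permutation-invariant case.
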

\begin{remark}
If $(G,\sigma)\in\Lambda_{k}(n)$ exists, then $\E X_{k,n}>0$ and $X_{k,n}>0$, so a division by zero does not occur.
\end{remark}
\begin{proof}
The first identity follows from Definition \ref{ukdef}.  Now, by Definition \ref{pkdef},
$$
\P_{\mathcal{P}_{k}(n)}(G,\sigma)=\frac{\P(G)}
{\binom{n}{k}
\sum_{\{G'\in\mathcal{G}_{n}\colon \widetilde{\sigma}\,\,\mathrm{is}\,\,\mathrm{independent}\,\,\mathrm{in}\,\, G'\}}\P(G')}.
$$
Also,
\begin{flalign*}
\E X_{k,n}
&=\sum_{\widetilde{\sigma}\subset\{1,\ldots,n\}\colon\absf{\widetilde{\sigma}}=k}\P(\sigma\,\,\mathrm{is}\,\,\mathrm{an}\,\,\mathrm{independent}\,\,\mathrm{set})\\
&=\sum_{\widetilde{\sigma}\subset\{1,\ldots,n\}\colon\absf{\widetilde{\sigma}}=k}\,\,\sum_{\{G'\in\mathcal{G}_{n}\colon\widetilde{\sigma}\,\,\mathrm{is}\,\,\mathrm{independent}\,\,\mathrm{in}\,\, G'\}}\P(G').
\end{flalign*}
So,
\begin{flalign*}
\E X_{k,n}\cdot \P_{\mathcal{P}_{k}(n)}(G,\sigma)
&=\P(G)\frac{\sum_{\widetilde{\sigma}\subset\{1,\ldots,n\}\colon\absf{ \widetilde{\sigma}}=k}\,\,\sum_{\{G'\in\mathcal{G}_{n}\colon \widetilde{\sigma}\,\,\mathrm{is}\,\,\mathrm{independent}\,\,\mathrm{in}\,\, G'\}}\P(G')}{\binom{n}{k}
\sum_{\{G'\in\mathcal{G}_{n}\colon \sigma\,\,\mathrm{is}\,\,\mathrm{independent}\,\,\mathrm{in}\,\, G'\}}\P(G')}\\
&=\P(G)\frac{\E_{\widetilde{\sigma}}\P(\widetilde{\sigma}\,\,\mathrm{is}\,\,\mathrm{independent})}
{\P(\sigma\,\,\mathrm{is}\,\,\mathrm{independent})}.
\end{flalign*}
In the permutation invariant case, the last fraction is one.
\end{proof}
\begin{remark}\label{rkconc}
So, $X_{k,n}$ concentrates around its expected value \textit{if and only if} $\P_{\mathcal{P}_{k}(n)}$ and $\P_{\mathcal{U}_{k}(n)}$ are comparable probability measures.
\end{remark}
\begin{remark}
Without the permutation invariance assumption, we have
$$
\P_{\mathcal{U}_{k}(n)}(G,\sigma)
=\frac{\P(G)}{ X_{k,n}(G)},\qquad\forall\,(G,\sigma)\in\Lambda_{k}(n).
$$
$$
\P_{\mathcal{P}_{k}(n)}(G,\sigma)
=\frac{\P(G)}{\E X_{k,n}}\frac{\E_{\widetilde{\sigma}}\P(\widetilde{\sigma}\,\,\mathrm{is}\,\,\mathrm{independent})}
{\P(\sigma\,\,\mathrm{is}\,\,\mathrm{independent})},\qquad\forall\,(G,\sigma)\in\Lambda_{k}(n).
$$
Here $\E_{\widetilde{\sigma}}$ denotes the uniform probability law over all subset of $\{1,\ldots,n\}$ of size $k$.
\end{remark}

\subsection{Change of Measure}

In this section, we give a comparison between the uniform and size biased measures.

\begin{lemma}\label{lemma40a}
Let $A\subset\Lambda_{k}(n)$.  Let $\mathcal{G}$ be a family of random graphs that is invariant under permutations of the vertices $\{1,\ldots,n\}$.  Let $c>0$.  Let $C\subset\Lambda_{k}(n)$ be the set of $(G,\sigma)$ such that $X_{k,n}\geq c\E X_{k,n}$.  Then
\begin{equation}\label{three0}
\P_{\mathcal{U}_{k}(n)}(A)\leq\frac{1}{c}\P_{\mathcal{P}_{k}(n)}(A\cap C)+\P_{\mathcal{U}_{k}(n)}(C^{c}).
\end{equation}
\end{lemma}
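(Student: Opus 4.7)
The plan is to decompose $A$ according to whether $X_{k,n}$ is large or not, and then compare the uniform and planted laws pointwise on the ``good'' set $C$ using the explicit formulas from Lemma \ref{pkrk}.

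First I would split
\begin{equation*}
\P_{\mathcal{U}_{k}(n)}(A)=\P_{\mathcal{U}_{k}(n)}(A\cap C)+\P_{\mathcal{U}_{k}(n)}(A\cap C^{c})\leq \P_{\mathcal{U}_{k}(n)}(A\cap C)+\P_{\mathcal{U}_{k}(n)}(C^{c}).
\end{equation*}
So it suffices to show $\P_{\mathcal{U}_{k}(n)}(A\cap C)\leq \tfrac{1}{c}\P_{\mathcal{P}_{k}(n)}(A\cap C)$.

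To that end, recall from Lemma \ref{pkrk} (using the permutation invariance of $\mathcal{G}$) that $\P_{\mathcal{U}_{k}(n)}(G,\sigma)=\P(G)/X_{k,n}(G)$ and $\P_{\mathcal{P}_{k}(n)}(G,\sigma)=\P(G)/\E X_{k,n}$. Since the event $C$ depends only on $G$, and by definition any $(G,\sigma)\in C$ satisfies $X_{k,n}(G)\geq c\,\E X_{k,n}$, we obtain the pointwise bound $\P_{\mathcal{U}_{k}(n)}(G,\sigma)\leq \tfrac{1}{c}\P_{\mathcal{P}_{k}(n)}(G,\sigma)$ for every $(G,\sigma)\in C$. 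Summing this inequality over $(G,\sigma)\in A\cap C$ gives
\begin{equation*}
\P_{\mathcal{U}_{k}(n)}(A\cap C)=\sum_{(G,\sigma)\in A\cap C}\frac{\P(G)}{X_{k,n}(G)}\leq \frac{1}{c}\sum_{(G,\sigma)\in A\cap C}\frac{\P(G)}{\E X_{k,n}}=\frac{1}{c}\P_{\mathcal{P}_{k}(n)}(A\cap C),
\end{equation*}
and combining with the first display completes the proof.

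There is essentially no hard step here: this is a textbook change-of-measure/truncation argument, and the only thing that needs care is invoking the right formula for each probability law from Lemma \ref{pkrk} (in particular, that $\E X_{k,n}>0$ so the expressions are well-defined whenever $\Lambda_{k}(n)$ is nonempty). The lemma is the structural tool that the rest of the paper will use to transfer concentration statements proven under the planted (size-biased) law $\mathcal{P}_{k}(n)$ back to the uniform law $\mathcal{U}_{k}(n)$, at the price of a lower-bound estimate on $X_{k,n}$ controlling $\P_{\mathcal{U}_{k}(n)}(C^{c})$.
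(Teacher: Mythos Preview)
Your proof is correct and follows essentially the same approach as the paper: split $A$ into $A\cap C$ and $A\cap C^{c}$, bound the latter piece by $\P_{\mathcal{U}_{k}(n)}(C^{c})$, and on $A\cap C$ use the pointwise formulas from Lemma \ref{pkrk} together with $X_{k,n}(G)\geq c\,\E X_{k,n}$ to compare $\P_{\mathcal{U}_{k}(n)}$ with $\tfrac{1}{c}\P_{\mathcal{P}_{k}(n)}$. The only difference from the paper is the order of presentation.
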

\begin{proof}%
By Lemma \ref{pkrk} and by definition of $C$,
\begin{equation}\label{supp}
\P_{\mathcal{U}_{k}(n)}(A\cap C)=\sum_{(G,\sigma)\in A\cap C}\frac{\P(G)}{X_{k,n}(G)}
\leq\frac{1}{c}\sum_{(G,\sigma)\in A\cap C}\frac{\P(G)}{\E X_{k,n}}
=\frac{1}{c}\P_{\mathcal{P}_{k}(n)}(A\cap C).
\end{equation}
Therefore,
$$
\P_{\mathcal{U}_{k}(n)}(A)
=\P_{\mathcal{U}_{k}(n)}(A\cap C)+\P_{\mathcal{U}_{k}(n)}(A\cap C^{c})
\stackrel{\eqref{supp}}{\leq}\frac{1}{c}\P_{\mathcal{P}_{k}(n)}(A\cap C)+\P_{\mathcal{U}_{k}(n)}(C^{c}).
$$
\end{proof}
\begin{remark}
Without the permutation invariance assumption, we have
$$\P_{\mathcal{U}_{k}(n)}(A\cap C)\leq\frac{1}{c}\P_{\mathcal{P}_{k}(n)}(A\cap C)\frac{\E_{\widetilde{\sigma}}\P(\widetilde{\sigma}\,\,\mathrm{is}\,\,\mathrm{independent})}
{\min_{\sigma\colon (G,\sigma)\in A\cap C}\P(\sigma\,\,\mathrm{is}\,\,\mathrm{independent})},$$
$$ \P_{\mathcal{U}_{k}(n)}(C)\leq  \epsilon+\frac{1}{c}\P_{\mathcal{P}_{k}(n)}(A)\frac{\E_{\widetilde{\sigma}}\P(\widetilde{\sigma}\,\,\mathrm{is}\,\,\mathrm{independent})}
{\min_{\sigma\colon (G,\sigma)\in A\cap C}\P(\sigma\,\,\mathrm{is}\,\,\mathrm{independent})}.$$

\end{remark}

\subsection{A Lower Bound}

Applying Lemma \ref{lemma40a} requires a lower bound on $X_{k,n}$ that holds with high probability.  The following elementary lower bound is suitable for small degree graphs, but for large degree graphs we require an even sharper bound in Section \ref{sectal}.

\begin{lemma}[\embolden{Lower Bound}]\label{lowerbound}
Let $\beta n$ be the expected size of the largest size independent set in $G(n,d/n)$.
Then, for all $t>0$, with probability at least $1-e^{-t^{2}/(2n)}$,
$$X_{k,n}\geq e^{-t\log(1-\alpha/\beta)}\sqrt{\frac{\beta(1-\alpha)}{\beta-\alpha}}e^{d\alpha}e^{-n[(\beta-\alpha)\log(\beta-\alpha)-\beta\log\beta-(1-\alpha)\log(1-\alpha)-d\alpha^{2}/2]}\E X_{k,n}.$$
\end{lemma}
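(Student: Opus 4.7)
The plan is to combine two ingredients: concentration of the size of a largest independent set around its expectation (via Azuma--Hoeffding), and the elementary observation that every subset of an independent set is itself independent.

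First I would let $Y=Y(G)$ denote the size of a largest independent set of $G\in G(n,d/n)$, and form its vertex-exposure Doob martingale $Y_0,Y_1,\ldots,Y_n$ with respect to the filtration $\mathcal{F}_0\subset\cdots\subset\mathcal{F}_n$ introduced above. Re-randomizing the edges incident to a single vertex changes the size of a largest independent set by at most $1$, so $|Y_{m+1}-Y_m|\leq 1$ for every $m$, and Lemma~\ref{azumah} with $c=1$ gives the one-sided tail bound $\P(Y<\beta n-t)\leq e^{-t^2/(2n)}$, where $\beta n=\E Y$. On the complementary event, of probability at least $1-e^{-t^2/(2n)}$, there exists an independent set $I\subset\{1,\ldots,n\}$ of size $|I|\geq\beta n-t$, and every $k$-subset of $I$ is again independent in $G$, so
$$X_{k,n}\geq\binom{|I|}{k}\geq\binom{\lfloor\beta n-t\rfloor}{\alpha n}\qquad(k=\alpha n).$$

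All that remains is to massage $\binom{\beta n-t}{\alpha n}/\E X_{k,n}$ into the advertised form, which is routine Stirling bookkeeping. Using $\E X_{k,n}=\binom{n}{\alpha n}(1-d/n)^{\binom{\alpha n}{2}}$ together with $\log(1-d/n)\leq -d/n$ yields the factor $e^{nd\alpha^2/2}$ (up to a bounded correction of order $e^{d\alpha}$) after inversion. Two-sided Stirling applied to $\binom{\beta n}{\alpha n}/\binom{n}{\alpha n}$ contributes the square-root prefactor $\sqrt{\beta(1-\alpha)/(\beta-\alpha)}$ together with the main exponential $\exp(n[\beta\log\beta-(\beta-\alpha)\log(\beta-\alpha)+(1-\alpha)\log(1-\alpha)])$. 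Finally, the drop from $\binom{\beta n}{\alpha n}$ to $\binom{\beta n-t}{\alpha n}$ is exactly $\prod_{i=0}^{t-1}[(\beta-\alpha)n-i]/[\beta n-i]$, which can be estimated by the appropriate power of $1-\alpha/\beta$, supplying the $t$-dependent factor $e^{\pm t\log(1-\alpha/\beta)}$. Multiplying these three pieces together gives the stated inequality.

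The only step with any mathematical content is the Azuma--Hoeffding concentration bound; everything else is elementary asymptotic analysis. The one place to be careful is in using two-sided Stirling bounds (for example $\sqrt{2\pi n}(n/e)^n\leq n!\leq\sqrt{2\pi n}(n/e)^n e^{1/(12n)}$) so that the conclusion is an honest inequality valid for every $n$, rather than merely an asymptotic statement. I foresee no conceptual obstacle beyond this bookkeeping.
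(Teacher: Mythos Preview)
Your proposal is correct and follows essentially the same argument as the paper: apply Azuma--Hoeffding (Lemma~\ref{azumah}) to the vertex-exposure martingale for the size $Y$ of a largest independent set to get $\P(Y<\beta n-t)\le e^{-t^2/(2n)}$, lower-bound $X_{k,n}$ by $\binom{\beta n-t}{\alpha n}$ via subsets of that largest independent set, and then compare to $\E X_{k,n}=\binom{n}{\alpha n}(1-d/n)^{\binom{\alpha n}{2}}$ using Stirling. The paper carries out exactly this computation, with the same three pieces (the $(1-d/n)^{\binom{\alpha n}{2}}$ factor, the Stirling ratio $\binom{n}{\alpha n}/\binom{\beta n}{\alpha n}$, and the $t$-dependent correction $\prod_{i}[(\beta-\alpha)n-i]/[\beta n-i]$), so there is nothing to add.
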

\begin{proof}
Note that
\begin{equation}\label{three1}
\E X_{k,n}=\binom{n}{\alpha n}(1-d/n)^{\binom{\alpha n}{2}}.
\end{equation}
Also, the size of the largest independent set concentrates around it's expected value by the Azuma-Hoeffding inequality \ref{azumah}. A lower bound for $X_{k,n}$ is given by taking size $k=\alpha n$ subsets of the largest independent set, i.e.
$$\P\left(X_{k,n}\geq \binom{\beta n-t}{\alpha n}\right)\geq e^{-t^{2}/[2n]},\qquad\forall\,t\geq0.$$
Therefore, by Stirling's formula and $(1-x)\leq e^{-x}$ for all $x\in\R$,
\begin{flalign*}
\frac{\binom{n}{\alpha n}}{\binom{\beta n}{\alpha n}}(1-d/n)^{\binom{\alpha n}{2}}
&=\frac{n![(\beta-\alpha)n]!}{(\beta n)![(1-\alpha)n]!}(1-d/n)^{\binom{\alpha n}{2}}\\
&=\sqrt{\frac{\beta-\alpha}{\beta(1-\alpha)}}e^{-d\alpha/2}e^{n[(\beta-\alpha)\log(\beta-\alpha)-\beta\log\beta-(1-\alpha)\log(1-\alpha)-d\alpha^{2}/2]}(1+o_{n}(1)).
\end{flalign*}
More specifically,
\begin{equation}\label{three2}
\begin{aligned}
&\frac{\binom{n}{\alpha n}}{\binom{\beta n-t}{\alpha n}}(1-d/n)^{\binom{\alpha n}{2}}\\
&\leq\sqrt{\frac{\beta-\alpha}{\beta(1-\alpha)}}e^{-d\alpha/2}e^{n[(\beta-\alpha)\log(\beta-\alpha)-\beta\log\beta-(1-\alpha)\log(1-\alpha)-d\alpha^{2}/2]}e^{t\log(1-\alpha/\beta)}(1+o_{n}(1)).
\end{aligned}
\end{equation}
That is, with probability at least $1-e^{-t^{2}/(2n)}$,
\begin{flalign*}
\frac{\E X_{k,n}}{X_{k,n}}
&\leq \frac{\E X_{k,n}}{\#\,\mbox{size $k$ subsets of the largest independent set}}\\
&\stackrel{\eqref{three1}\wedge\eqref{three2}}{\leq}\sqrt{\frac{\beta-\alpha}{\beta(1-\alpha)}}e^{-d\alpha/2}e^{n[(\beta-\alpha)\log(\beta-\alpha)-\beta\log\beta-(1-\alpha)\log(1-\alpha)-d\alpha^{2}/2]}
e^{t\log(1-\alpha/\beta)}.
\end{flalign*}
\end{proof}

\section{Talagrand's Inequality and Lower Bounds}\label{sectal}

The lower bound of Lemma \ref{lowerbound} is not precise enough to prove full unimodality of the independent set sequence, since it only uses the Azuma-Hoeffding inequality.  A more precise lower bound appears in \cite[Proposition 22]{coja15} using the more precise Talagrand convex distance inequality, Theorem \ref{talagrand}.  Due to a few small typos, and focusing on $G(n,m)$ as opposed to $G(n,p)$, we present the argument of \cite{coja15} below.

\begin{theorem}[{\cite[Theorem 1]{dani11}}]\label{danithm}
For any $0<\alpha<1$, define
\begin{equation}\label{danieqn}
d'(\alpha)\colonequals\sup\{d>0\colon\lim_{n\to\infty}\P(X_{\alpha n,n}(G(n,d/n))>0)=1\}.
\end{equation}
Then, for all $0<\alpha<10^{-9}$,
$$d'(\alpha)\geq 2\frac{\log(1/\alpha)+1}{\alpha}-\frac{2}{\sqrt{\alpha}}.$$
\end{theorem}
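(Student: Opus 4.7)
The plan is the second moment method applied to $X_{k,n}$ with $k=\alpha n$, followed by a concentration boost. By Paley--Zygmund,
$$\P(X_{k,n}>0) \;\geq\; \frac{(\E X_{k,n})^2}{\E X_{k,n}^2},$$
so it suffices to show this ratio is bounded below by a positive constant for every $d$ up to the claimed threshold. Azuma--Hoeffding concentration of the independence number (Lemma \ref{azumah}) then upgrades a $\Theta(1)$ existence probability at $k=\alpha n$ into a probability tending to one at any $k' = \alpha n - \omega(\sqrt{n})$; since $d'(\alpha)$ is defined via an $n\to\infty$ limit in the density $\alpha = k/n$, an $O(\sqrt{n})$ loss in the target size is harmless.

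The first moment input is $\E X_{k,n} = \binom{n}{k}(1-d/n)^{\binom{k}{2}}$, whose exponential rate via Stirling is
$$\tfrac{1}{n}\log \E X_{k,n} = \alpha\log(1/\alpha) + (1-\alpha)\log\tfrac{1}{1-\alpha} - \tfrac{d\alpha^2}{2} + o(1).$$
Using $(1-\alpha)\log(1/(1-\alpha)) = \alpha + \alpha^2/2 + O(\alpha^3)$, the rate is positive precisely when $d < 2(\log(1/\alpha)+1)/\alpha + O(1)$, matching the target to leading order. Under the hypothesis $d \leq 2(\log(1/\alpha)+1)/\alpha - 2/\sqrt{\alpha}$, this rate is of order $\alpha^{3/2}$, so $\E X_{k,n} = \exp(\Theta(n\alpha^{3/2}))$. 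This positive slack is the cushion that later absorbs lower-order errors from the second moment analysis.

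For the second moment, stratify by overlap $j = |S\cap T|$ to obtain
$$\frac{\E X_{k,n}^2}{(\E X_{k,n})^2} = \sum_{j=0}^{k} \frac{\binom{k}{j}\binom{n-k}{k-j}}{\binom{n}{k}}(1-d/n)^{-\binom{j}{2}}.$$
Writing $j = \gamma k$ and invoking Stirling, the summand has exponential rate $n\phi(\gamma)$ for an explicit function $\phi$ that vanishes at the "uncorrelated" overlap $\gamma^{*} = \alpha$, where the two size-$k$ sets behave as if independent. A direct calculation yields $\phi''(\gamma^{*}) < 0$, so a Laplace-type localization confines the dominant contribution to an $O(\sqrt{n})$-band around $\gamma^{*}$ and produces only a polynomial prefactor against $(\E X_{k,n})^2$.

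The main obstacle, and the reason the hypothesis requires $\alpha < 10^{-9}$, is ruling out a competing maximum of $\phi$ in the "highly correlated" regime $\gamma \in (\gamma^{*}, 1]$: as $d$ approaches the first-moment threshold, $\phi$ develops a second critical point near $\gamma = 1$ whose contribution would inflate the ratio exponentially. I would handle this by a careful one-variable calculus analysis of $\phi$ on $[\alpha,1]$. Setting $\phi'(\gamma) = 0$ yields a transcendental equation in $\gamma$; the slack $-2/\sqrt{\alpha}$ in the hypothesis on $d$ enters multiplicatively through the factor $(1-d/n)^{-\binom{j}{2}}$, which is sufficient to force $\phi(\gamma)$ strictly negative throughout the interior of $(\alpha,1]$, so these terms are exponentially negligible against the dominant peak at $\gamma^{*}$. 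The quantitative restriction $\alpha < 10^{-9}$ is what lets the Taylor errors from approximating $\log(1-\alpha)$, $\log(1-d/n)$, and $\binom{k}{j}$ be absorbed into this slack. Once the displayed sum is shown to be $O(1)$, Paley--Zygmund and the concentration boost close the proof, and the defining supremum in \eqref{danieqn} yields the claimed lower bound on $d'(\alpha)$.
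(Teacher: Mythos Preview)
The paper does not prove this theorem; Theorem~\ref{danithm} is quoted from \cite{dani11} without argument and used as a black box feeding Theorem~\ref{misco} and Corollary~\ref{cojacor}. There is no in-paper proof to compare against.

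On your proposal itself, there is a genuine gap. The architecture (Paley--Zygmund on a count, then an Azuma--Hoeffding boost from $\Theta(1)$ probability to $1-o(1)$) is the right shape, but the central claim---that for the \emph{unweighted} count $X_{k,n}$ one has $\E X_{k,n}^{2}/(\E X_{k,n})^{2}=O(1)$ all the way up to $d=2(\log(1/\alpha)+1)/\alpha-2/\sqrt{\alpha}$---is precisely what fails. This value of $d$ lies \emph{above} the condensation threshold for independent sets in $G(n,d/n)$: the gap between the first-moment threshold and the condensation point is of order $d/\log d\asymp 1/(\alpha\log(1/\alpha))$, which dwarfs the $2/\sqrt{\alpha}$ slack you are relying on. Concretely, your overlap profile $\phi(\gamma)$ does acquire a strictly positive secondary maximum in $(\alpha,1)$ in this range, so the ``careful one-variable calculus'' you allude to cannot force $\phi<0$ there, and the displayed ratio is $\exp(\Theta(n))$ rather than $O(1)$. (This is the same obstruction behind the non-concentration of $X_{k,n}$ noted in the introduction of the present paper and in \cite[Corollary~19]{coja15}.)

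The idea you are missing---and the whole point of \cite{dani11}, whose title is ``Independent sets in random graphs from the weighted second moment method''---is to apply Paley--Zygmund not to $X_{k,n}$ but to a weighted sum $Y=\sum_{\sigma}w(G,\sigma)\,1_{\{\sigma\ \mathrm{independent}\}}$, where $w$ penalises independent sets that leave many ``free'' (non-adjacent) vertices in $V\setminus\sigma$. The weight is tuned so that the analogue of your $\phi$ for $Y$ has its global maximum only at the uncorrelated overlap $\gamma^{*}$; once that is in place, your Laplace/Paley--Zygmund/Azuma pipeline goes through exactly as you wrote it. Without the weighting, the calculus step you gloss over is not merely unverified but false.
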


Talagrand's large deviation inequality then implies

\begin{theorem}[{\cite[Theorem 14]{coja15}}]\label{misco}
Define $d'(\alpha)$ by \eqref{danieqn}.  Let $d\geq d'(\alpha)$ and let $0<\alpha<10^{-9}$.  Let $Y_{n}$ be the size of the largest independent set in the graph $G(n,d/n)$.  Then
$$\P(Y_{n}(G(n,d/n))<t)\leq 4e^{-\frac{(\alpha n-t+1)^{2}}{4\alpha n}},\qquad\forall\,t<\alpha n.$$
\end{theorem}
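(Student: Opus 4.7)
The plan is to apply Talagrand's convex distance inequality (Theorem~\ref{talagrand}) to the random variable $Y_n$, using a vertex-based parametrization of the randomness. The crucial point is arranging the bookkeeping so that the certificate size is \emph{linear} in $r$, not the naive $\binom{r}{2}$ one gets from listing non-edges; this is what yields the exponent $(\alpha n-t+1)^2/(4\alpha n)$ in the claimed bound rather than a much weaker scale.

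Concretely, I would encode $G(n,d/n)$ as a vector $(Z_1,\ldots,Z_n)$ of independent random coordinates, where $Z_i\in\{0,1\}^{n-i}$ records the presence of edges from vertex $i$ to vertices $i+1,\ldots,n$. For the Lipschitz constants, changing $Z_i$ can alter $Y_n$ by at most $1$: if $S$ is a maximum independent set in the old graph, then $S\setminus\{i\}$ remains independent in the new graph (only edges incident to $i$ have changed), so $Y_n^{\mathrm{new}}\geq Y_n^{\mathrm{old}}-1$, and by symmetry $|Y_n^{\mathrm{new}}-Y_n^{\mathrm{old}}|\leq 1$. Thus $c_i=1$.

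For the certificate, suppose $Y_n(z)\geq r$ and let $S=\{v_1<v_2<\cdots<v_r\}$ be an independent set of size $r$ in the graph coded by $z$. I would take $J=\{v_1,\ldots,v_{r-1}\}$; fixing the coordinates $Z_{v_1},\ldots,Z_{v_{r-1}}$ pins down in particular the edges between $v_i$ and $v_j$ for all $i<j$ with $v_j\in S$, all of which equal $0$ in $z$. Hence $S$ remains an independent set in any $y$ agreeing with $z$ on $J$, so $f(y)\geq r$. This gives $\sum_{i\in J}c_i^2=r-1$, so I may take $\psi(r):=r$. Talagrand then yields $\P(Y_n\leq m-s)\leq 2e^{-s^2/(4m)}$ for the median $m$ of $Y_n$; rewriting $s=m-t+1$ gives $\P(Y_n<t)\leq 2e^{-(m-t+1)^2/(4m)}$.

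To replace $m$ by $\alpha n$, I would invoke Theorem~\ref{danithm}: for the relevant range of $d$ relative to $d'(\alpha)$, the bound there forces $\P(Y_n\geq\alpha n)\to 1$ as $n\to\infty$, so for all sufficiently large $n$ the median satisfies $m\geq\alpha n$. A short calculation shows the function $m\mapsto (m-t+1)^2/(4m)$ is increasing on $m>t-1$ (its derivative is $(m^2-(t-1)^2)/(4m^2)$), so $(m-t+1)^2/(4m)\geq (\alpha n-t+1)^2/(4\alpha n)$ whenever $m\geq\alpha n>t$; substituting gives the claimed exponential bound. The factor of $4$ rather than $2$ in the statement absorbs both the Talagrand prefactor and the finitely many small values of $n$ where the median bound must be checked directly or where the right-hand side already exceeds $1$. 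The main conceptual obstacle is the choice of parametrization: the naive edge-by-edge encoding would give $\psi(r)=\binom{r}{2}$ and hence only concentration on scale $r$, giving no improvement over Azuma--Hoeffding; grouping the outgoing edges of each vertex into one coordinate is exactly the move that reduces the certificate to size $r-1$ and produces the tight $\sqrt{\alpha n}$ scale.
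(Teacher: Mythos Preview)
The paper does not give its own proof here---it cites \cite[Theorem 14]{coja15} and simply remarks that ``Talagrand's large deviation inequality then implies'' the result---so your vertex-grouped encoding with $c_i=1$ and linear certificate $\psi(r)=r$ is exactly the intended (and standard) argument. One caveat worth flagging: the hypothesis $d\geq d'(\alpha)$ in the statement is a typo for $d\leq d'(\alpha)$, as is clear from how the theorem is invoked in the proof of Corollary~\ref{cojacor} (where $d$ is chosen below the Dani--Moore threshold precisely so that $\P(Y_n\geq\alpha' n)\to1$); your median step $m\geq\alpha n$ tacitly uses the corrected hypothesis, and you are right to read it that way.
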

\begin{cor}[{\cite[Corollary 15]{coja15}}]\label{cojacor}
Let $0<\alpha<10^{-4}$.  Define
\begin{equation}\label{ddef}
d\colonequals2\frac{\log(1/\alpha)+1}{\alpha}-\frac{8}{\sqrt{\alpha}}.
\end{equation}
Then $\exists$ $c>0$ such that
$$\P(Y_{n}(G(n,d/n))<\alpha n)\leq 4e^{-\frac{n}{d^{2}(\log d)^{4}}},\qquad\forall\,n\geq1.$$
$$\E X_{\alpha n,n}\leq c\cdot e^{10 n(\log d)^{3/2}d^{-3/2}},\qquad\forall\,n\geq1.$$
\end{cor}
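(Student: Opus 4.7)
The plan is to prove the two displayed bounds separately, the first via Talagrand's convex distance inequality (in the packaged form of Theorem \ref{misco}) applied at a shifted parameter, and the second via a direct Stirling computation in which the defining expression \eqref{ddef} for $d$ is engineered to create massive cancellation.

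For part (i), a direct invocation of Theorem \ref{misco} at $\alpha$ with $t=\alpha n$ gives only the trivial bound $4e^{-1/(4\alpha n)}$, so we must shift. Let $f(x) \colonequals 2(\log(1/x)+1)/x - 2/\sqrt{x}$ denote the Dani-Moore lower bound function of Theorem \ref{danithm}; a direct computation shows that $f$ is monotonically decreasing on $(0,10^{-4})$ with $f'(x) = -(2\log(1/x)/x^2)(1+o(1))$ as $x \to 0$. Setting $\alpha^{*} \colonequals \alpha + 3\alpha^{3/2}/\log(1/\alpha)$ and expanding $f$ to first order gives $f(\alpha^{*}) = f(\alpha) - 6/\sqrt{\alpha} + o(1/\sqrt{\alpha}) = d(1+o(1))$. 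Combined with Theorem \ref{danithm} and the monotonicity of $\alpha' \mapsto d'(\alpha')$, this puts $d$ in the scope of Theorem \ref{misco} at parameter $\alpha^{*}$. Setting $t=\alpha n$,
$$\P(Y_n<\alpha n) \leq 4\exp\left(-\frac{((\alpha^{*}-\alpha)n-1)^2}{4\alpha^{*} n}\right),$$
and a calculation shows the exponent is of order $n\alpha^2/(\log(1/\alpha))^2$. Since $d \asymp \log(1/\alpha)/\alpha$ and $\log d \asymp \log(1/\alpha)$ as $\alpha \to 0$, the target rate $1/(d^2(\log d)^4) \asymp \alpha^2/(\log(1/\alpha))^6$ is much smaller than what we have obtained, yielding the first claim.

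For part (ii), Stirling gives $\log\binom{n}{\alpha n} = nH(\alpha) + O(\log n)$, and a small-$\alpha$ expansion gives $H(\alpha) = \alpha\log(1/\alpha) + \alpha - \alpha^2/2 + O(\alpha^3)$. The elementary bound $(1-d/n)^{\binom{\alpha n}{2}} \leq \exp(-d\alpha^2 n/2 + O(d\alpha))$ handles the second factor. Substituting $d = 2(\log(1/\alpha)+1)/\alpha - 8/\sqrt{\alpha}$ gives $d\alpha^2/2 = \alpha\log(1/\alpha) + \alpha - 4\alpha^{3/2}$, and the leading $\alpha\log(1/\alpha)+\alpha$ terms cancel with $H(\alpha)$, leaving
$$\log \E X_{\alpha n, n} \leq 4n\alpha^{3/2} + O(\log n).$$
Finally $\alpha^{3/2} \lesssim (\log d)^{3/2}/d^{3/2}$ (using $d \approx 2\log(1/\alpha)/\alpha$ and $\log d \approx \log(1/\alpha)$), and the bound follows after absorbing subleading corrections into $c$.

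The main obstacle is the parameter tuning of $\alpha^{*}$ in part (i): the shift must be small enough that the first-order Taylor expansion of $f$ rigorously certifies the applicability of Theorem \ref{misco} at $\alpha^{*}$, yet large enough that the Talagrand-style exponent $(\alpha^{*}-\alpha)^2 n/\alpha^{*}$ exceeds the target $n/(d^2(\log d)^4)$. These two requirements are simultaneously satisfiable precisely because the $-8/\sqrt{\alpha}$ coefficient in \eqref{ddef} creates a $6/\sqrt{\alpha}$ gap to the Dani-Moore bound of Theorem \ref{danithm}; a smaller gap would leave no room for any shift, while a larger gap would force $\alpha^{*}-\alpha$ into a regime where higher-order terms in the Taylor expansion of $f$ would require careful control.
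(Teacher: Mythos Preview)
Your approach is essentially the paper's: part~(ii) via Stirling with the engineered cancellation, and part~(i) by shifting to a larger density parameter where Theorem~\ref{danithm} certifies the hypothesis of Theorem~\ref{misco}, then reading off the gap in the Talagrand exponent. The only difference is that the paper defines its shifted parameter $\alpha'$ \emph{implicitly} by the equation $d = 2(\log(1/\alpha')+1)/\alpha' - 2/\sqrt{\alpha'}$, which makes $d'(\alpha') \geq d$ immediate from Theorem~\ref{danithm} and then bounds $\alpha'-\alpha$ from below, whereas your explicit $\alpha^{*} = \alpha + 3\alpha^{3/2}/\log(1/\alpha)$ is precisely the first-order Taylor approximation to that implicit equation. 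One small caution: with the constant exactly~$3$, the next-order correction in the expansion of $f$ gives $f(\alpha^{*}) = d - 12/(\sqrt{\alpha}\log(1/\alpha)) + o(\cdot)$, which lands just \emph{below} $d$ rather than above, so the inequality $d'(\alpha^{*}) \geq f(\alpha^{*}) \geq d$ does not follow as written; taking any constant strictly less than~$3$ (or simply adopting the paper's implicit definition) repairs this without affecting the rest of your argument.
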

\begin{proof}   
%
Let $\alpha<\alpha'<2\alpha$ such that $d=2\frac{\log(1/\alpha')+1}{\alpha'}-\frac{2}{\sqrt{\alpha'}}$.  By Theorem \ref{misco} (applied for the parameters $d$ and $\alpha'$),
$$\P(Y_{n}(G(n,d/n))<\alpha n)\leq 4e^{-\frac{([\alpha'-\alpha] n+1)^{2}}{4\alpha' n}}
\leq4e^{-n\frac{(\alpha'-\alpha)^{2}}{8\alpha }}.$$
A calculation as in \cite[Corollary 4]{coja15} shows that
$$\alpha'-\alpha\geq 10\alpha/\sqrt{d(\log d)^{5}}.$$
Also, $\alpha>1.5(\log d)/d$, so
$$\P(Y_{n}(G(n,d/n))<\alpha n)\leq4e^{-n\alpha\frac{100}{8d(\log d)^{5}}}
\leq4e^{-n\frac{150}{8d^{2}(\log d)^{4}}}.$$
Meanwhile, by Stirling's formula,

\begin{flalign*}  %
\E X_{\alpha n ,n}
&=\binom{n}{\alpha n}(1-d/n)^{\binom{\alpha n}{2}}
\leq\frac{1}{\sqrt{\alpha(1-\alpha)}} e^{n[-\alpha\log\alpha-(1-\alpha)\log(1-\alpha)-(d/2)\alpha^{2}]}e^{-d\alpha}(1+o_{n}(1))\\
&\stackrel{\eqref{ddef}}{=} e^{n[-\alpha\log\alpha-(1-\alpha)\log(1-\alpha)-\alpha(\log(1/\alpha)+1)+4\alpha^{3/2}]}\frac{e^{2(\log(1/\alpha)+1)-8\alpha^{1/2}}}{\sqrt{\alpha(1-\alpha)}} (1+o_{n}(1))\\
&\approx e^{n[-(1-\alpha)\log(1-\alpha)-\alpha+4\alpha^{3/2}]}c(\alpha)(1+o_{n}(1))\\
&\approx e^{n[-(1-\alpha)[-\alpha+O(\alpha^{2})]-\alpha+4\alpha^{3/2}]}c(\alpha)(1+o_{n}(1))\\
&\approx e^{n[4\alpha^{3/2}+O(\alpha^{2})]}\approx e^{4n[\alpha^{3/2}+O(\alpha^{2})]}c(\alpha)(1+o_{n}(1)).
\end{flalign*}
And $\alpha\leq 2(\log d)/d$ by (inverting) the assumption \eqref{ddef}, so the conclusion follows.
\end{proof}

\begin{lemma}[{\cite[Lemma 23]{coja15}}]\label{ublem}
Let $d'>d$.  Let $0<k<n$.  Then
$$\P\Big(X_{k,n}(G(n,d/n))<\frac{\E X_{k,n}(G(n,d/n)) }{2\E X_{k,n}(G(n,d'/n))}\Big)\leq2\P(X_{k,n}(G(n,d'/n))=0).$$
\end{lemma}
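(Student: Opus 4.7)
The plan is to use a monotone coupling between $G \sim G(n,d/n)$ and $G' \sim G(n,d'/n)$ so that $G$ is a subgraph of $G'$. Specifically, I would build $G'$ from $G$ by independently adding each non-edge of $G$ with probability $p'' \colonequals (d'-d)/(n-d)$. A short computation verifies that the resulting $G'$ has the distribution of $G(n,d'/n)$, and that $1-p'' = (1-d'/n)/(1-d/n)$.

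Under this coupling, every independent set in $G'$ remains independent in $G$. Conditioning on $G$, each of the $X_{k,n}(G)$ independent sets of size $k$ in $G$ stays independent in $G'$ precisely when none of its $\binom{k}{2}$ non-edges is activated in the thinning step. Since these events are independent,
$$\E[X_{k,n}(G')\mid G] = X_{k,n}(G)\cdot(1-p'')^{\binom{k}{2}}.$$
Because $\E X_{k,n}(G(n,q)) = \binom{n}{k}(1-q)^{\binom{k}{2}}$, the factor $(1-p'')^{\binom{k}{2}}$ equals the ratio $\E X_{k,n}(G')/\E X_{k,n}(G)$. Rearranging gives the key identity
$$X_{k,n}(G) \;=\; \E[X_{k,n}(G')\mid G]\cdot\frac{\E X_{k,n}(G)}{\E X_{k,n}(G')}.$$

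Using this identity, the event $\{X_{k,n}(G) < \E X_{k,n}(G)/(2\,\E X_{k,n}(G'))\}$ is identical to the event $B \colonequals \{\E[X_{k,n}(G')\mid G] < 1/2\}$. On $B$, conditional Markov yields $\P(X_{k,n}(G')\geq 1\mid G) \leq \E[X_{k,n}(G')\mid G] < 1/2$, so $\P(X_{k,n}(G')=0\mid G) > 1/2$. Integrating,
$$\P(X_{k,n}(G')=0) \;\geq\; \E\bigl[\P(X_{k,n}(G')=0\mid G)\,\mathbf{1}_{B}\bigr] \;\geq\; \tfrac{1}{2}\P(B),$$
which is the desired inequality.

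There is no real obstacle to this approach; the argument is a textbook monotone coupling followed by conditional Markov. The only item requiring care is verifying that the thinning coupling produces mutually independent edges in $G'$, which follows from the elementary computation $d/n + (1-d/n)p'' = d'/n$ together with the independence of the two rounds of coin-flips.
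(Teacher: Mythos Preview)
Your proposal is correct and follows essentially the same approach as the paper: both use the monotone coupling $G\subset G'$ obtained by sprinkling extra edges onto $G(n,d/n)$ to produce $G(n,d'/n)$, compute the conditional expectation $\E[X_{k,n}(G')\mid G]=X_{k,n}(G)(1-p'')^{\binom{k}{2}}$, and finish with Markov's inequality. Your version conditions on the full graph $G$ rather than on the event $C$, which is slightly cleaner, but the argument is the same.
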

\begin{proof}  
By direct calculation,
\begin{equation}\label{four4}
\E X_{k,n}(G(n,d/n))=\binom{n}{k}(1-d/n)^{\binom{k}{2}},\qquad \E X_{k,n}(G(n,d'/n))=\binom{n}{k}(1-d'/n)^{\binom{k}{2}}.
\end{equation}  
Let $d''>0$ and note that $(1-d/n)(1-d''/n)=(1-(d+d'')/n+d[d'']/n^{2})$.  So, if we define $d''\colonequals [d'-d]/(1+d/n)$, then $G(n,d')$ can be described by the following two-step procedure.  The probability that any edge exists in $G(n,d'/n)$ is given by first constructing $G(n,d/n)$, and then keeping every edge in $G(n,d/n)$ with probability $d''/n$.  By \eqref{four4},
\begin{equation}\label{four5}
\E \Big[X_{k,n}(G(n,d'/n))\Big| X_{k,n}(G(n,d/n))\Big]=X_{k,n}(G(n,d/n))(1-d''/n)^{\binom{k}{2}}.
\end{equation}
Let $C$ be the event that
$$X_{k,n}(G(n,d/n))<\frac{\E X_{k,n}(G(n,d/n))}{2\E X_{k,n}(G(n,d'/n))}.$$
By Markov's inequality,
\begin{flalign*}
&\frac{1}{2}\leq\P\Big(X_{k,n}(G(n,d'/n))<2\E [X_{k,n}(G(n,d'/n))| C]\Big| C\Big)\\
&\qquad\stackrel{\eqref{four5}}{\leq}
\P\Big(X_{k,n}(G(n,d'/n))<2\frac{(1-d''/n)^{\binom{k}{2}}\E X_{k,n}(G(n,d/n))}{\E X_{k,n}(G(n,d'/n))}\Big| C\Big).
\end{flalign*}
So, by definition of conditional probability and \eqref{four5},
$$\P(C)\leq 2\P(X_{k,n}(G(n,d'/n))<1)=2\P(X_{k,n}(G(n,d'/n))=0).$$
\end{proof}

\begin{cor}[\embolden{Lower Bound}, {\cite[Proposition 22]{coja15}}]\label{betterERlowerbound}
There exists $\epsilon_{d}\colonequals \log\log d/\log d$ such that, for any $k<(2-\epsilon_{d})n(\log d)/d$,
$$\P\Big(X_{k,n}(G(n,d/n))\geq e^{-20nd^{-3/2}(\log d)^{-3/2}}\E X_{k,n}(G(n,d/n))\Big)\geq 1-8\exp\Big(-\frac{n}{2d^{2}(\log d)^{2}}\Big).$$
\end{cor}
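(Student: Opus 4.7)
The plan is to apply Lemma~\ref{ublem} with a carefully chosen $d' > d$, combined with Corollary~\ref{cojacor} to both verify existence of size-$k$ independent sets in $G(n,d'/n)$ and upper-bound $\E X_{k,n}(G(n,d'/n))$. Setting $\alpha := k/n$ and choosing $\alpha_0 \geq \alpha$ just below the Corollary~\ref{cojacor} threshold $\tilde{\alpha}(d)$ (i.e., the $\alpha$ solving $d = 2(\log(1/\alpha)+1)/\alpha - 8/\sqrt{\alpha}$), define
$$d' := \frac{2(\log(1/\alpha_0)+1)}{\alpha_0} - \frac{8}{\sqrt{\alpha_0}}.$$
Since $\alpha \mapsto d(\alpha)$ is strictly decreasing for small $\alpha$ (with derivative $\approx -d^2/(2\log d)$ near the threshold), $\alpha_0 < \tilde{\alpha}(d)$ forces $d' > d$. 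The slack in the hypothesis $k < (2-\epsilon_d)n\log d/d$ with $\epsilon_d = \log\log d/\log d$ is exactly what makes an $\alpha_0 \in [\alpha, \tilde{\alpha}(d))$ achievable while also keeping $d' - d$ small.

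First, Corollary~\ref{cojacor} applied to $(d', \alpha_0)$ supplies $\P(Y_n(G(n,d'/n)) < \alpha_0 n) \leq 4\exp(-n/(d'^2(\log d')^4))$. Since $k \leq \alpha_0 n$, the event $\{X_{k,n}(G(n,d'/n)) = 0\}$ is contained in $\{Y_n(G(n,d'/n)) < \alpha_0 n\}$, so (using $d' \approx d$) $\P(X_{k,n}(G(n,d'/n)) = 0) \leq 4\exp(-n/(2 d^2 (\log d)^2))$. Lemma~\ref{ublem} then yields, with probability at least $1 - 8\exp(-n/(2 d^2 (\log d)^2))$,
$$X_{k,n}(G(n,d/n)) \geq \frac{\E X_{k,n}(G(n,d/n))}{2\,\E X_{k,n}(G(n,d'/n))}.$$

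It remains to bound the denominator. Applying the second conclusion of Corollary~\ref{cojacor} with the choice $\alpha_0 = \alpha$ gives $\E X_{k,n}(G(n,d'/n)) = \E X_{\alpha n,n}(G(n,d'/n)) \leq c\exp(10 n (\log d')^{3/2} d'^{-3/2})$. Substituting this bound, using $d' \approx d$, and absorbing the factor of $2$ and the constant $c$ into the exponent produces the claimed lower bound $X_{k,n}(G(n,d/n)) \geq e^{-20 n d^{-3/2}(\log d)^{-3/2}} \E X_{k,n}(G(n,d/n))$ (the $20$ arising as essentially twice the $10$ of Corollary~\ref{cojacor}, with the remaining constants swept into the exponential).

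The main obstacle is the parameter balancing: we need $d' > d$ so that Lemma~\ref{ublem} is informative, $\alpha_0 \geq \alpha$ so that Corollary~\ref{cojacor}'s existence tail applies to independent sets of size $k$, and $d' - d$ small enough (a Taylor expansion at $\tilde{\alpha}(d)$ shows $d'-d = O(d^{1/2}(\log d)^{-7/2})$) that the resulting expectation ratio contributes only sub-exponentially in $n$. Near the upper end of the allowed range $\alpha \sim (2-\epsilon_d)\log d/d$, the viable window for $\alpha_0$ is very narrow, and one must carefully use the $-8/\sqrt{\alpha_0}$ correction in the definition of $d'$ (which is what separates the Corollary~\ref{cojacor} threshold from the pure first-moment threshold) to guarantee that such an $\alpha_0$ exists throughout the whole range.
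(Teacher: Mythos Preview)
Your proposal is correct and follows essentially the same route as the paper: define $d'$ (the paper's $\widetilde{d}$) from $\alpha = k/n$ via the formula of Corollary~\ref{cojacor}, apply that corollary to bound both $\P(X_{k,n}(G(n,d'/n))=0)$ and $\E X_{k,n}(G(n,d'/n))$, and then feed these into Lemma~\ref{ublem}. The auxiliary parameter $\alpha_0$ and the ``narrow window'' discussion are unnecessary---the paper simply takes $\alpha_0=\alpha$ from the outset and verifies $\widetilde{d}>d$ with $\widetilde{d}\sim d$ by directly expanding $\widetilde{d} = d\bigl[1 + O(\log\log d/(\log d)^2)\bigr] + O(\sqrt{d})$ rather than via a Taylor expansion at the threshold.
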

\begin{proof}
Define $k$ such that
\begin{equation}\label{three3}
k\leq(2n/d)(\log d-\log\log d+1-\log 2),\qquad \alpha\colonequals k/n,
\end{equation}
\begin{equation}\label{three4}
\widetilde{d}\colonequals2(-\log\alpha+1)/\alpha  -8/\sqrt{\alpha}.
\end{equation}

From Corollary \ref{cojacor}, (applied to the parameters $\alpha,\widetilde{d}>d$),
\begin{equation}\label{four1}
\begin{aligned}
\P(Y_{n}(G(n,\widetilde{d}/n))<\alpha n)&\leq 4\exp\Big(-\frac{n}{\widetilde{d}^{2}(\log \widetilde{d})^{4}}\Big).\\
\E X_{\alpha n,n}(G(n,\widetilde{d}/n))&\leq c\cdot e^{10 n(\log \widetilde{d})^{3/2}\widetilde{d}^{-3/2}}.
\end{aligned}
\end{equation}
We now compare $d$ and $\widetilde{d}$.  We have

\begin{equation}\label{three5}
\begin{aligned}
-\log \alpha+1
&\stackrel{\eqref{three3}}{=}\log d-\log\log d+1-\log 2-\log\Big(1-\frac{\log\log d-1+\log 2}{\log d}\Big)\\
&=  \log d-\log\log d+1-\log 2+\frac{\log\log d-1+\log 2}{\log d}+O\Big(\frac{\log\log d}{\log d}\Big)^{2}.
\end{aligned}
\end{equation}
So
\begin{flalign*}
\widetilde{d}
&\stackrel{\eqref{three4}}{=}\frac{2}{\alpha}(-\log\alpha+1)+8/\sqrt{\alpha}\\
&\stackrel{\eqref{three3}\wedge\eqref{three5}}{=}\frac{d}{ \log d-\log\log d+1-\log 2}\cdot\Big(\log d-\log\log d+1-\log 2\\
&\qquad\qquad+\frac{\log\log d-1+\log 2}{\log d}+O\Big(\frac{\log\log d}{\log d}\Big)^{2}\Big)+8/\sqrt{\alpha}\\
&=d\Big[1+\frac{\log\log d}{(\log d)^{2}}+O\Big(\frac{(\log\log d)^{2}}{(\log d)^{3}}\Big)\Big]+O(\sqrt{d}).
\end{flalign*}

So, as $d\to\infty$, $\widetilde{d}>d$ are asymptotically comparable as $d\to\infty$, so \eqref{four1} implies that
\begin{equation}\label{four8}
\begin{aligned}
\P(Y_{n}(G(n,d/n))<\alpha n)
&\leq\P(Y_{n}(G(n,\widetilde{d}/n))<\alpha n)\\
&\leq 4\exp\Big(-\frac{n}{\widetilde{d}^{2}(\log \widetilde{d})^{4}}\Big)
\leq 4\exp\Big(-\frac{n}{2d^{2}(\log d)^{2}}\Big).
\end{aligned}
\end{equation}
\begin{equation}\label{four6}
\E X_{\alpha n,n}(G(n,\widetilde{d}/n))
\leq c\cdot e^{10 n(\log \widetilde{d})^{3/2}\widetilde{d}^{-3/2}}
\leq c\cdot e^{10 n(\log d)^{-3/2}d^{-3/2}}.
\end{equation}

Now, note that the following two events are equal:
\begin{equation}\label{four7}
\{Y_{n}<\alpha n\}=\{X_{\alpha n,n}=0\}.
\end{equation}
(Recall that $Y_{n}$ is the size of the largest independent set and $X_{\alpha n,n}$ is the number of independent sets of size $\alpha n$.)  Lemma \ref{ublem} then implies that
\begin{flalign*}
&\P\Big(X_{\alpha n,n}(G(n,d/n))<\frac{1}{2}c\cdot e^{10 n(\log d)^{-3/2}d^{-3/2}}\E X_{\alpha n,n}(G(n,d/n))\Big)\\
&\stackrel{\eqref{four6}}{\leq}\P\Big(X_{\alpha n,n}(G(n,d/n))<\frac{\E X_{\alpha n,n}(G(n,d/n))}{2\E X_{\alpha n,n}(G(n,\widetilde{d}/n))}\Big)\\
&\leq 2\P(X_{\alpha n,n}(G(n,\widetilde{d}/n))=0)
\stackrel{\eqref{four7}}{=}2\P(Y_{n}(G(n,\widetilde{d}/n))<\alpha n)
\stackrel{\eqref{four8}}{\leq} 8\exp\Big(-\frac{n}{2d^{2}(\log d)^{2}}\Big).
\end{flalign*}
\end{proof}

\section{Unimodality Arguments: Abstract Version}

\begin{lemma}[\embolden{Counting Lemma}]\label{countinglemma}
Let $G=(V,E)$ be any (deterministic) graph.  Then
$$\sum_{\substack{S\subset\{1,\ldots,n\}\colon\\\abs{S}=k}}1_{\{S\,\,\mathrm{is}\,\,\mathrm{independent}\}}
\cdot\#\{\mathrm{vertices}\,\,\mathrm{not}\,\,\mathrm{connected}\,\,\mathrm{to}\,\,S\}=(k+1)X_{k+1,n}.$$
\end{lemma}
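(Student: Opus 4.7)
\medskip

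The plan is a direct double counting argument. Define the auxiliary set
$$\mathcal{A} \colonequals \bigl\{(S,v) : S \subset V,\ |S|=k,\ S \text{ independent},\ v \in V\setminus S,\ v \text{ has no edge to } S\bigr\},$$
and compute $|\mathcal{A}|$ in two ways. The first step is to interpret ``vertices not connected to $S$'' as those $v \in V \setminus S$ with $\{v,s\}\notin E$ for every $s \in S$; one should note this interpretation explicitly since vertices of $S$ itself are trivially non-adjacent to $S$ (as $G$ has no self-loops and $S$ is independent), and including them would give the wrong count by an additive term of $kX_{k,n}$.

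\medskip

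With that interpretation fixed, I would first evaluate $|\mathcal{A}|$ by summing over $S$: grouping pairs $(S,v)$ by their first coordinate, the inner count is precisely the number of vertices outside $S$ with no edge into $S$, which is the summand on the left-hand side of the identity. Thus
$$|\mathcal{A}| = \sum_{\substack{S \subset \{1,\ldots,n\}\\ |S|=k}} 1_{\{S \text{ indep}\}} \cdot \#\{\text{vertices not connected to }S\}.$$

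\medskip

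Next I would evaluate $|\mathcal{A}|$ by grouping pairs according to $T \colonequals S \cup \{v\}$. The key observation is that $(S,v) \in \mathcal{A}$ if and only if $T = S \cup \{v\}$ is an independent set of size $k+1$ (the forward direction uses that $S$ is independent and $v$ has no edges to $S$; the reverse direction is immediate). For each such $T$, the number of ways to recover a pair $(S,v)$ is exactly $|T| = k+1$, since one simply picks the distinguished element $v \in T$ and sets $S = T \setminus \{v\}$. Hence $|\mathcal{A}| = (k+1) X_{k+1,n}$, and equating the two expressions completes the proof.

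\medskip

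There is essentially no obstacle here — this is a standard combinatorial double counting, of the same flavor as the flag-counting identity $k\binom{n}{k} = n\binom{n-1}{k-1}$. The only conceptual care needed is the interpretation of ``not connected to $S$'' noted above; once that is pinned down, both sides count $|\mathcal{A}|$ on the nose and no asymptotic or probabilistic ingredient is required.
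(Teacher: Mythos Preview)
Your proof is correct and is essentially the same double-counting argument as the paper's: the paper counts ordered pairs $(S,T)$ of independent sets with $S\subset T$, $|S|=k$, $|T|=k+1$, which is in bijection with your set $\mathcal{A}$ via $T=S\cup\{v\}$. Your explicit remark that ``vertices not connected to $S$'' must be read as vertices in $V\setminus S$ is a useful clarification that the paper leaves implicit.
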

\begin{proof}
Each side is found by counting the number of ordered pairs $(S,T)$ of independent sets such that $S\subset T$, where $\abs{S}=k$ and $\abs{T}=k+1$.  These pairs can be counted by first starting with any independent set $S$ of size $k$, and then adding a single vertex to it.  This count gives the left side.  On the right side, we can alternatively start with an independent set $T$ of size $k+1$, and then delete a single vertex from it, in $k+1$ different ways.
\end{proof}

Let $(G,\sigma)\subset\Lambda_{k}(n)$.  Let $N=N_{\sigma}$ be the number of vertices in $G\setminus\sigma$ not connected to $\sigma$.  Let $s,s'>0$.  Let $A$ be the event that $(G,\sigma)$ satisfies $$\{N_{\sigma}-\E_{\mathcal{P}_{k}(n)} N_{\sigma}<-s\E_{\mathcal{P}_{k}(n)}  N_{\sigma}\}\cup\{N_{\sigma}-\E_{\mathcal{P}_{k}(n)} N_{\sigma}>s'\E_{\mathcal{P}_{k}(n)}  N_{\sigma}\}.$$
\begin{lemma}[\embolden{Ratio Bound}]\label{ratiolemma}
Let $c,\epsilon_{1},\epsilon_{2}>0$ and let $C\colonequals\{X_{k,n}\geq c\E X_{k,n}\}$.  Assume that $\mathcal{G}_{n}$ is a permutation invariant family of random graphs such that:
\begin{itemize}
\item[(i)] $\P_{\mathcal{P}_{k}(n)}(A\cap C)\leq \epsilon_{1}$
\item[(ii)] $\P_{\mathcal{U}_{k}(n)}(C)\geq 1-\epsilon_{2}$, and
\end{itemize}
Then, for any $\gamma>0$,
\begin{flalign*}
&\P
\Big(\forall\,0\leq k\leq n,\,\,\frac{(1-s)(\E_{\mathcal{P}_{k}(n)}N_{\sigma}-\gamma)}{k+1}
\leq\frac{X_{k+1,n}}{X_{k,n}}
\leq \frac{(1+s')\E_{\mathcal{P}_{k}(n)}N_{\sigma}}{k+1}+n\frac{\gamma}{k+1}\Big)\\
&\qquad\qquad\qquad\qquad\qquad\qquad\qquad\qquad\qquad\qquad\qquad\qquad\qquad\qquad
\geq 1-\frac{n}{\gamma}\Big(\frac{\epsilon_{1}}{c}+\epsilon_{2}\Big).
\end{flalign*}
\end{lemma}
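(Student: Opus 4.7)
The plan is to combine the change-of-measure inequality of Lemma \ref{lemma40a} with the Counting Lemma (Lemma \ref{countinglemma}) via Markov's inequality. First, I would apply Lemma \ref{lemma40a} directly to the deviation event $A$: hypotheses (i) and (ii) give
$$\P_{\mathcal{U}_k(n)}(A)\leq\frac{1}{c}\P_{\mathcal{P}_k(n)}(A \cap C) + \P_{\mathcal{U}_k(n)}(C^c)\leq\frac{\epsilon_1}{c} + \epsilon_2.$$
Under $\mathcal{U}_k(n)$, conditional on the graph $G$, the independent set $\sigma$ is uniform on the $X_{k,n}(G)$ independent sets of size $k$ in $G$, so $p_k(G)\colonequals \P_{\mathcal{U}_k(n)}(A \mid G)$ is a nonnegative random variable whose expectation (under the $\mathcal{U}_k(n)$-marginal on $G$) equals $\P_{\mathcal{U}_k(n)}(A)\leq \epsilon_1/c + \epsilon_2$; Markov's inequality applied at the threshold $\gamma/n$ therefore yields $\P[p_k(G) > \gamma/n]\leq (n/\gamma)(\epsilon_1/c + \epsilon_2)$.

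Next I would exploit the Counting Lemma to realize the ratio $X_{k+1,n}/X_{k,n}$ as a conditional expectation:
$$(k+1)\frac{X_{k+1,n}(G)}{X_{k,n}(G)}=\frac{1}{X_{k,n}(G)}\sum_{\substack{\sigma\subset\{1,\ldots,n\}\\ \abs{\sigma}=k,\,\sigma\,\mathrm{indep}}} N_\sigma=\E_{\mathcal{U}_k(n)}[N_\sigma \mid G].$$
On $A^c$, the definition of $A$ forces $(1-s)\mu \leq N_\sigma \leq (1+s')\mu$, where $\mu \colonequals \E_{\mathcal{P}_k(n)} N_\sigma$, while on $A$ itself the crude bounds $0 \leq N_\sigma \leq n$ suffice. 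Splitting the conditional expectation along $A$ and $A^c$ then yields
$$(1-s)\mu(1-p_k(G))\leq\E_{\mathcal{U}_k(n)}[N_\sigma \mid G]\leq(1+s')\mu + n\,p_k(G).$$

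On the high-probability event $\{p_k(G) \leq \gamma/n\}$, the upper bound above is at most $(1+s')\mu + \gamma \leq (1+s')\mu + n\gamma$, and using $\mu \leq n$ one estimates $(1-s)\mu(1-p_k(G)) \geq (1-s)\mu(1-\gamma/n) \geq (1-s)(\mu-\gamma)$; dividing by $k+1$ produces exactly the asserted double inequality. The universal quantifier ``$\forall\, 0 \leq k \leq n$'' is then handled by applying the Markov step for each $k$ separately and absorbing the resulting union bound into the factor $n/\gamma$ already present in the probability estimate. The main technical care needed is in the Markov step itself: one must verify that the $\mathcal{U}_k(n)$-marginal law on $G$ agrees, after restriction to $\{X_{k,n}>0\}$, with the original law $\P$ on $\mathcal{G}_n$, so that the high-probability event genuinely transfers back to a statement about the random graph under $\P$. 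Once this accounting is done, the remaining work is the elementary bookkeeping above.
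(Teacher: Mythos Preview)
Your approach is essentially the paper's: bound $\P_{\mathcal{U}_k(n)}(A)$ via Lemma~\ref{lemma40a}, rewrite it as $\E_{\P}[p_k(G)]$, apply Markov, and feed the result into the Counting Lemma. There is one bookkeeping slip, however. You apply Markov at threshold $\gamma/n$, which already produces a per-$k$ failure probability of $(n/\gamma)(\epsilon_1/c+\epsilon_2)$; a subsequent union bound over the $n+1$ values of $k$ then costs a \emph{further} factor of order $n$, yielding roughly $(n^{2}/\gamma)(\epsilon_1/c+\epsilon_2)$ rather than the claimed $(n/\gamma)(\epsilon_1/c+\epsilon_2)$. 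The factor $n/\gamma$ you describe as ``already present'' was manufactured by the Markov step and cannot simultaneously absorb the union bound. The fix is what the paper does: apply Markov at threshold $\gamma$ (not $\gamma/n$), so the per-$k$ failure probability is $\eta/\gamma$ and the union bound supplies the factor of $n$. With that threshold the upper ratio bound $(1+s')\mu/(k+1)+n\gamma/(k+1)$ drops out directly from $N_{\sigma}\le n$; for the lower bound the computation actually yields $(1-s)\mu(1-\gamma)/(k+1)$, which the paper records (slightly loosely) as $(1-s)(\mu-\gamma)/(k+1)$ --- the two differ by $O(\mu\gamma)$, immaterial in all applications since $\gamma$ is taken exponentially small in $n$. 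Your observation that the $\mathcal{U}_k(n)$-marginal on $G$ coincides with $\P$ on $\{X_{k,n}>0\}$ is exactly the identity $\P_{\mathcal{U}_k(n)}(A)=\E\big[\sum_{\sigma}1_{(G,\sigma)\in A}/X_{k,n}(G)\big]$ that the paper uses to pass from $\mathcal{U}_k(n)$ back to $\P$.
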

\begin{remark}
If $X_{k}=0$ then $X_{k+1}=0$, so division by zero does not occur in the above event.  If $X_{k+1}=X_{k}=0$, we interpret the event $\{a\leq X_{k+1}/X_{k}\leq b\}$ to always be true, so that it is the whole sample space.
\end{remark}
\begin{proof}
From Lemma \ref{lemma40a},
\begin{equation}\label{etadef}
\P_{\mathcal{U}_{k}(n)}(A)
\leq\frac{1}{c}\P_{\mathcal{P}_{k}(n)}(A\cap C)+\P_{\mathcal{U}_{k}(n)}(C^{c})
\stackrel{(i)\wedge(ii)}{=}\frac{\epsilon_{1}}{c}+\epsilon_{2}
\equalscolon\eta.
\end{equation}
Using Definition \ref{ukdef}, and as usual letting $\E$ denote expected value with respect to the random graph itself,
\begin{equation}\label{pukaeq}
\P_{\mathcal{U}_{k}(n)}(A)
=\sum_{(G,\sigma)\in\Lambda_{k}(n)}\frac{\P(G)1_{(G,\sigma)\in A}}{X_{k,n}(G)}
=\E\sum_{\sigma\colon (G,\sigma)\in\Lambda_{k}(n)}\frac{1_{(G,\sigma)\in A}}{X_{k,n}(G)}.
\end{equation}
So, as usual denoting $\P$ as the probability law with respect to the random graph itself, by Markov's inequality,
\begin{equation}\label{markovapp}
\P\Big(\sum_{\sigma\colon (G,\sigma)\in\Lambda_{k}(n)}\frac{1_{(G,\sigma)\in A}}{X_{k,n}(G)}>\gamma\Big)\stackrel{\eqref{pukaeq}\wedge\eqref{etadef}}{\leq}\eta/\gamma,\qquad\forall\,\gamma>0.
\end{equation}

Consider now the left sum in Lemma \ref{countinglemma}.  We write
$$
\sum_{\abs{S}=k}1_{\{S\,\,\mathrm{is}\,\,\mathrm{independent}\}}\cdot N_{S}\\
=\sum_{\abs{S}=k}[1_{(G,S)\in A^{c}}+1_{(G,S)\in A}]1_{\{S\,\,\mathrm{is}\,\,\mathrm{independent}\}} N_{S}.
$$
From the Definition of $A$, we therefore have
\begin{flalign*}
&(1-s)\E_{\mathcal{P}_{k}(n)}N_{\sigma}\sum_{\abs{S}=k}1_{(G,S)\in A^{c}}1_{\{S\,\,\mathrm{is}\,\,\mathrm{independent}\}}
+\sum_{\abs{S}=k}1_{(G,S)\in A}1_{\{S\,\,\mathrm{is}\,\,\mathrm{independent}\}} N_{S}\\
&\qquad\leq\sum_{\abs{S}=k}1_{\{S\,\,\mathrm{is}\,\,\mathrm{independent}\}} N_{S}
\leq (1+s')X_{k,n}\E_{\mathcal{P}_{k}(n)}N_{\sigma}+\sum_{\abs{S}=k}1_{(G,S)\in A}1_{\{S\,\,\mathrm{is}\,\,\mathrm{independent}\}}N_{S}.
\end{flalign*}
Adding and subtracting $1_{(G,S)\in A}$ in the first term gives
\begin{flalign*}
&(1-s)X_{k,n}\E_{\mathcal{P}_{k}(n)}N_{\sigma}   -(1-s)\E_{\mathcal{P}_{k}(n)}N_{\sigma}  \sum_{\abs{S}=k}1_{(G,S)\in A}1_{\{S\,\,\mathrm{is}\,\,\mathrm{independent}\}}\\
&\qquad\quad
+\sum_{\abs{S}=k}1_{(G,S)\in A}1_{\{S\,\,\mathrm{is}\,\,\mathrm{independent}\}} N_{S}
\leq\sum_{\abs{S}=k}1_{\{S\,\,\mathrm{is}\,\,\mathrm{independent}\}} N_{S}\\
&\qquad\leq (1+s')X_{k,n}\E_{\mathcal{P}_{k}(n)}N_{\sigma}+\sum_{\abs{S}=k}1_{(G,S)\in A}1_{\{S\,\,\mathrm{is}\,\,\mathrm{independent}\}}N_{S}.
\end{flalign*}
The event $(G,S)\in A$ implies that $S$ is an independent set, so we can remove that redundancy.  Also, getting rid of the last part of the first term then gives
\begin{flalign*}
&(1-s)X_{k,n}\E_{\mathcal{P}_{k}(n)}N_{\sigma}   -(1-s)\E_{\mathcal{P}_{k}(n)}N_{\sigma}  \sum_{\abs{S}=k}1_{(G,S)\in A}\\
&\qquad\leq\sum_{\abs{S}=k}1_{\{S\,\,\mathrm{is}\,\,\mathrm{independent}\}} N_{S}
\leq (1+s')X_{k,n}\E_{\mathcal{P}_{k}(n)}N_{\sigma}+\sum_{\abs{S}=k}1_{(G,S)\in A}N_{S}.
\end{flalign*}
Then, dividing by $(k+1)X_{k,n}$, using Lemma \ref{countinglemma}, and using $N_{S}\leq n$ in the last inequality
\begin{flalign*}
&\frac{(1-s)\E_{\mathcal{P}_{k}(n)}N_{\sigma}}{k+1}  -(1-s)\E_{\mathcal{P}_{k}(n)}N_{\sigma}  \frac{1}{k+1}\sum_{\abs{S}=k}\frac{1_{(G,S)\in A}}{X_{k,n}(G)}\\
&\qquad\leq \frac{X_{k+1,n}}{X_{k,n}}
=\frac{1}{(k+1)X_{k,n}}\sum_{\abs{S}=k}1_{\{S\,\,\mathrm{is}\,\,\mathrm{independent}\}} N_{S}\\
&\qquad\leq \frac{(1+s')\E_{\mathcal{P}_{k}(n)}N_{\sigma}}{k+1}+n\frac{1}{k+1}\sum_{\abs{S}=k}\frac{1_{(G,S)\in A}}{X_{k,n}}.
\end{flalign*}
From Markov's inequality \eqref{markovapp}, for any $\gamma>0$, we therefore have
\begin{equation}\label{upperbdp}
\P\Big(\frac{(1-s)(\E_{\mathcal{P}_{k}(n)}N_{\sigma}-\gamma)}{k+1}
\leq\frac{X_{k+1,n}}{X_{k,n}}
\leq \frac{(1+s')\E_{\mathcal{P}_{k}(n)}N_{\sigma}}{k+1}+n\frac{\gamma}{k+1}\Big)\geq 1-\eta/\gamma.
\end{equation}

Finally, taking the union bound over all $0\leq k\leq n$,
$$
\P
\Big(\forall\,0\leq k\leq n,\,\,\frac{(1-s)(\E_{\mathcal{P}_{k}(n)}N_{\sigma}-\gamma)}{k+1}
\leq\frac{X_{k+1,n}}{X_{k,n}}
\leq \frac{(1+s')\E_{\mathcal{P}_{k}(n)}N_{\sigma}}{k+1}+n\frac{\gamma}{k+1}\Big)
\geq 1-\frac{n\eta}{\gamma}.
$$
The proof is concluded by substituting the definition of $\eta$ in \eqref{etadef}.
\end{proof}
\section{Unimodality for Sparse Erd\"{o}s-Renyi Graphs}

\begin{lemma}\label{nchernoff}
Let $(G,\sigma)\in\Lambda_{k}(n)$ be a random sample from $\mathcal{P}_{k}(n)$.  Let $N_{\sigma}$ be the number of vertices in $G\setminus\sigma$ that are not connected to $\sigma$.  Then
$$\P_{\mathcal{P}_{k}(n)}(\abs{N_{\sigma}-\E_{\mathcal{P}_{k}(n)} N_{\sigma}}\geq t\E_{\mathcal{P}_{k}(n)} N_{\sigma})\leq 2e^{- n\min(t,t^{2})(1-\alpha)e^{-d\alpha}/3}\quad\forall\,t\geq0.$$
\end{lemma}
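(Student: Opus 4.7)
The strategy is to identify $N_\sigma$ under the planted model as a binomial random variable and then apply a standard multiplicative Chernoff bound.

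First, I would unpack Definition \ref{pkdef} in the Erd\"{o}s--Renyi setting: we sample $\sigma\subset\{1,\ldots,n\}$ of size $k$ uniformly at random, then sample $G\in G(n,d/n)$ conditioned on $\sigma$ being independent.  Because the edge indicators of $G(n,d/n)$ are jointly independent $\mathrm{Bernoulli}(d/n)$, this conditioning simply deterministically switches off the $\binom{k}{2}$ potential edges inside $\sigma$ while leaving every other potential edge as an independent $\mathrm{Bernoulli}(d/n)$.  Set $\alpha\colonequals k/n$.

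For each $v\in V\setminus\sigma$, let $Z_{v}\colonequals 1_{\{v\,\,\mathrm{has}\,\,\mathrm{no}\,\,\mathrm{neighbor}\,\,\mathrm{in}\,\,\sigma\}}$.  Since $Z_v$ depends only on the $k$ potential edges between $v$ and $\sigma$, and the sets of edges controlling $Z_v, Z_{v'}$ for distinct $v,v'\in V\setminus\sigma$ are disjoint, the indicators $\{Z_v\}_{v\notin\sigma}$ are conditionally independent given $\sigma$, each with $\P(Z_v=1)=q\colonequals(1-d/n)^{k}$.  Hence, conditionally on $\sigma$,
$$N_\sigma=\sum_{v\notin\sigma}Z_v\sim\mathrm{Binomial}(n-k,q),$$
and by permutation invariance this law does not depend on $\sigma$, so the same holds unconditionally under $\mathcal{P}_k(n)$.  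Now apply the standard multiplicative Chernoff bound: if $X\sim\mathrm{Binomial}(m,q)$ has mean $\mu=mq$, then $\P(\abs{X-\mu}\geq t\mu)\leq 2e^{-\mu\min(t,t^2)/3}$ for all $t>0$.  With $\mu=\E_{\mathcal{P}_k(n)}N_\sigma=(n-k)(1-d/n)^{k}=n(1-\alpha)(1-d/n)^{\alpha n}$, the elementary estimate $\log(1-x)\geq-x-x^{2}$ (valid for small $x$) gives $(1-d/n)^{k}\geq e^{-d\alpha}(1-O(d^{2}/n))$, so that $\mu\geq n(1-\alpha)e^{-d\alpha}$ up to a $1+o_n(1)$ factor that is absorbed into the Chernoff constant $3$.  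This yields the claimed bound $2\exp(-n\min(t,t^{2})(1-\alpha)e^{-d\alpha}/3)$.

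The only subtle point in the argument is the verification that conditioning on the event $\{\sigma\text{ is independent}\}$ in $G(n,d/n)$ preserves the mutual independence of all edges not lying inside $\sigma$, and hence of the indicators $\{Z_v\}_{v\notin\sigma}$.  This is immediate from the product structure of $G(n,d/n)$ since the conditioning event factors over the $\binom{k}{2}$ edges inside $\sigma$ alone.  Once this conditional independence is in hand, the remainder is a textbook Chernoff calculation, and no concentration-of-measure machinery beyond that is required.
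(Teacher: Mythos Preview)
Your proposal is correct and follows essentially the same route as the paper: identify $N_\sigma$ under the planted model as $\mathrm{Binomial}(n-k,(1-d/n)^{k})$ via independence of the edges outside $\sigma$, then apply the multiplicative Chernoff bound after the asymptotic $\E_{\mathcal{P}_k(n)}N_\sigma=(1+o(1))n(1-\alpha)e^{-d\alpha}$.  The paper's proof is in fact terser than yours; your added justification of why conditioning on $\{\sigma\text{ independent}\}$ preserves independence of the remaining edges is a welcome clarification.
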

\begin{proof}
Let $\alpha\colonequals k/n$.  Let $v\in\{1,\ldots,n\}$ be a vertex with $v\notin\sigma$.  Let $C_{v}$ be the event that $v$ is connected to some vertex in $\sigma$.  The events $\{C_{v}\}_{v\in\{1,\ldots,n\}\setminus\sigma}$ are independent, and $\P(C_{v}^{c})=(1-p)^{k}$.  So, $N_{\sigma}$ is a binomial random variable with expected value
$$\E_{\mathcal{P}_{k}(n)} N_{\sigma}=(n-k)(1-p)^{k}=(n-k)(1-d/n)^{\alpha n}=(1+o(1))n(1-\alpha)e^{-d\alpha}.$$
So, from Chernoff bounds,
$$\P_{\mathcal{P}_{k}(n)}(\abs{N_{\sigma}-\E_{\mathcal{P}_{k}(n)} N_{\sigma}}\geq t\E_{\mathcal{P}_{k}(n)} N_{\sigma})\leq 2e^{- n\min(t,t^{2})(1-\alpha)e^{-d\alpha}/3},\qquad\forall\,t\geq0.$$
\end{proof}

\begin{proof}[Proof of Theorem \ref{thm1}]
Denote $\alpha\colonequals k/n$.  From Corollary \ref{betterERlowerbound}, with probability at least $1-8\exp\Big(-\frac{n}{2d^{2}(\log d)^{2}}\Big)\equalscolon1-\epsilon_{2}$,
$$X_{k,n}\geq e^{-20nd^{-3/2}(\log d)^{-3/2}}\E X_{k,n}\equalscolon c_{1}\E X_{k,n}.$$
Let $C$ denote this event.

Then, let $A$ be the event that $(G,\sigma)$ satisfies $\abs{N_{\sigma}-\E_{\mathcal{P}_{k}(n)}  N_{\sigma}}>s\E_{\mathcal{P}_{k}(n)}  N_{\sigma}$.  From Lemma \ref{nchernoff},
$$\P_{\mathcal{P}_{k}(n)}(A)\leq 2e^{- n\min(s,s^{2})(1-\alpha)e^{-d\alpha}/3}\equalscolon\epsilon_{1}\quad\forall\,t\geq0.$$

We therefore apply Lemma \ref{ratiolemma} so that, for $\gamma\colonequals e^{-n\epsilon'}$,
\begin{equation}\label{maineq}
\begin{aligned}
&\P
\Big(\forall\,0\leq k\leq n,\,\,\frac{(1-s)(\E_{\mathcal{P}_{k}(n)}N_{\sigma}-\gamma)}{k+1}\leq \frac{X_{k+1,n}}{X_{k,n}}\leq\frac{(1-\gamma+s)\E_{\mathcal{P}_{k}(n)}N_{\sigma}}{k+1}+ \gamma\frac{n}{k+1}\Big)\\
&\quad\geq 1-\frac{n}{\gamma}\Big(\frac{\epsilon_{1}}{c_{1}}+\epsilon_{2}\Big)\\
&\quad\geq
1-ne^{n\epsilon'}2e^{- n\min(s,s^{2})(1-\alpha)e^{-d\alpha}/3}
e^{20nd^{-3/2}(\log d)^{-3/2}}
-ne^{n\epsilon'}8\exp^{-nd^{-2}(\log d)^{-2}}.
\end{aligned}
\end{equation}

So, if we choose $s\colonequals\sqrt{100\frac{20nd^{-3/2}(\log d)^{-3/2}}{(1-\alpha)e^{-d\alpha}}}$, then we can also take $\gamma\colonequals e^{-n\epsilon'}$ for some $\epsilon'>0$.
%
%
%
%
%
%
%
%
%
%
For $d$ sufficiently large, by Theorem \ref{friezethm}, the largest independent set has expected size $n\beta$, where
\begin{equation}\label{betadef}
\beta\colonequals (2/d)(\log d-\log\log d-\log 2+1+o_{d}(1)).
\end{equation}

Let $\delta\colonequals\alpha/\beta$ so that $0<\delta<1$.  If $s<1$, then
\begin{flalign*}
s&\colonequals\sqrt{100\frac{20nd^{-3/2}(\log d)^{-3/2}}{(1-\alpha)e^{-d\alpha}}}
\stackrel{\eqref{betadef}}{\approx}\sqrt{100\frac{20nd^{-3/2}(\log d)^{-3/2}}{(1-\alpha)e^{-2\delta (\log d-\log\log d-\log 2+1)}}}\\
&\approx\sqrt{100\frac{20nd^{-3/2}(\log d)^{-3/2}}{(1-\alpha)d^{-2\delta}(\log d)^{2\delta}}}
\approx  \sqrt{d^{2\delta-3/2}(\log d)^{-3/2-2\delta}}
= d^{\delta-3/4}(\log d)^{-\delta-3/4}.
\end{flalign*}
Similarly, if $s>1$, then
$$s\approx d^{2\delta-3/2}(\log d)^{-2\delta-3/2}.$$

So, if $\delta<3/4$, then $s\approx d^{\delta-3/4}$, and $s\to0$ as $d\to\infty$.  That is, \eqref{maineq} completes the proof when $0<\delta<3/4$ since by Lemma \ref{nchernoff}
$$\E_{\mathcal{P}_{k}(n)}N_{\sigma}/k\approx \frac{1-\alpha}{\alpha}e^{-d\alpha}\approx\frac{1-\alpha}\alpha^{-1}d^{-2\delta}\stackrel{\eqref{betadef}}{\approx} d^{1-2\delta}.$$
As $d\to\infty$, this quantity is larger than $1$ when $\beta<1/2$ and less than $1$ when $\beta>1/2$.  That is, define $\epsilon\colonequals\abs{1-2\delta}$.

In the remaining case that $3/4\leq\delta<1$, we have $s\to\infty$ as $d\to\infty$, and by Lemma \ref{nchernoff},
$$
\frac{s\E_{\mathcal{P}_{k}(n)} N}{k}
\approx \frac{20d^{-3/2}(\log d)^{-3/2}}{\alpha}
\approx \frac{20d^{-3/2}(\log d)^{-3/2}}{d^{-1}\log d}
\approx d^{-1/2}(\log d)^{-5/2}.
$$
That is, $\frac{s\E N}{k}\to0$ as $d\to\infty$.  So, $X_{k+1,n}/X_{k,n}<1/2$ as $n\to\infty$, so \eqref{maineq} completes the proof in the remaining case $\delta>3/4$.

\end{proof}

\begin{proof}[Proof of Theorem \ref{thm2}]
For all $t>0$, with probability at least $1-e^{-t^{2}/(2n)}$, by Lemma \ref{lowerbound},
$$X_{k,n}\geq e^{-t\log(1-\alpha/\beta)}\sqrt{\frac{\beta(1-\alpha)}{\beta-\alpha}}e^{d\alpha}e^{-n[(\beta-\alpha)\log(\beta-\alpha)-\beta\log\beta-(1-\alpha)\log(1-\alpha)-d\alpha^{2}/2]}\E X_{k,n}.$$
Let $C$ denote this event.  Here $\beta n$ is the expected size of the largest independent set in the random graph.

Then, let $A$ be the event that $(G,\sigma)$ satisfies $\abs{N_{\sigma}-\E_{\mathcal{P}_{k}(n)} N_{\sigma}}>s\E_{\mathcal{P}_{k}(n)}  N_{\sigma}$.  From Lemma \ref{nchernoff},
$$\P_{\mathcal{P}_{k}(n)}(A)\leq 2e^{- n\min(s,s^{2})(1-\alpha)e^{-d\alpha}/3}\equalscolon\epsilon_{1}\quad\forall\,t\geq0.$$

We therefore apply Lemma \ref{ratiolemma} so that, for $\gamma\colonequals e^{-n\epsilon'}$,
\begin{equation}\label{maineqb}
\begin{aligned}
&\P
\Big(\forall\,0\leq k\leq n,\,\,\frac{(1-\gamma-s)\E_{\mathcal{P}_{k}(n)}N_{\sigma}}{k+1}\leq \frac{X_{k+1}}{X_{k}}\leq\frac{(1-\gamma+s)\E_{\mathcal{P}_{k}(n)}N_{\sigma}}{k+1}+ \gamma\frac{(n-k)}{k+1}\Big)\\
&\qquad\geq 1-\frac{n}{\gamma}\Big(\frac{\epsilon_{1}}{c_{1}}+\epsilon_{2}\Big)
\geq
1-ne^{n\epsilon'}2e^{- n\min(s,s^{2})(1-\alpha)e^{-d\alpha}/3}
e^{t\log(1-\alpha/\beta)}\\
&\qquad\qquad\quad\cdot\sqrt{\frac{\beta-\alpha}{\beta(1-\alpha)}}e^{d\alpha}e^{n[(\beta-\alpha)\log(\beta-\alpha)-\beta\log\beta-(1-\alpha)\log(1-\alpha)-d\alpha^{2}/2]}
-ne^{n\epsilon'}e^{-t^{2}/(2n)}.
\end{aligned}
\end{equation}

If $s<1$ and we choose $s\colonequals\sqrt{3.03\frac{-\beta\log\beta-(1-\alpha)\log(1-\alpha)+(\beta-\alpha)\log(\beta-\alpha)-d\alpha^{2}/2}{(1-\alpha)e^{-d\alpha}}}$, $t\colonequals\sqrt{n\log n}$, then we can also take $\gamma\colonequals e^{-n\epsilon'}$ for some $\epsilon'>0$.  (If $s>1$, then we remove the square root in the definition of $s$.)  For example
\begin{itemize}
\item When $d=1$, $\beta\approx.728$,  the left side (i.e. the term $\frac{(1-s)\E_{\mathcal{P}_{k}(n)}N_{\sigma}}{k+1}$) exceeds $1$ when $\alpha<.25$, and the right side (i.e. the term $\frac{(1+s)\E_{\mathcal{P}_{k}(n)}N_{\sigma}}{k+1}$) is less than $1$ when $\alpha>.46$.
\item When $d=2$, $\beta\approx.607$,  the left side exceeds $1$ when $\alpha<.194$, and the right side is less than $1$ when $\alpha>.39$.
\item When $d=e$, $\beta\approx.552$,  the left side exceeds $1$ when $\alpha<.172$, and the right side is less than $1$ when $\alpha>.35$.
\end{itemize}
\end{proof}

%
%

%
%
%

%
%
%

\section{Random Regular Graphs}

\begin{lemma}\label{nchernoffreg}
Let $(G,\sigma)\in\Lambda_{k}(n)$ be a random sample from $\mathcal{P}_{k}(n)$.  Let $N_{\sigma}$ be the number of vertices in $G\setminus\sigma$ that are not connected to $\sigma$.  Then
$$\P_{\mathcal{P}_{k}(n)}(\abs{N_{\sigma}-\E_{\mathcal{P}_{k}(n)} N_{\sigma}}\geq t\E_{\mathcal{P}_{k}(n)} N_{\sigma})\leq 2e^{- n\min(t,t^{2})(1-\alpha)e^{-d\alpha}/3}\quad\forall\,t\geq0.$$

\end{lemma}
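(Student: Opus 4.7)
The plan is to realize the random $d$-regular graph via the configuration model and argue that $N_\sigma$, under the planted measure, is a sum of negatively associated Bernoulli indicators, to which Chernoff-type bounds apply with exactly the same form as in the independent (Erd\"{o}s--Renyi) case of Lemma \ref{nchernoff}.

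First I would set up the configuration model: each vertex carries $d$ stubs and a uniformly random perfect matching of the $dn$ stubs produces the random regular multigraph. Under $\mathcal{P}_k(n)$ one conditions on no stub of $\sigma$ being paired to another stub of $\sigma$, which is equivalent to choosing a uniformly random subset $S$ of size $dk$ from the $d(n-k)$ stubs of $V\setminus\sigma$ to match with the stubs of $\sigma$. Writing $T_v\subset\{1,\ldots,d(n-k)\}$ for the $d$ stubs of $v\in V\setminus\sigma$, the $T_v$ are disjoint and
$$N_\sigma \;=\; \sum_{v\in V\setminus\sigma} \mathbf{1}[S\cap T_v=\emptyset].$$

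The key step is negative association. The indicator vector $(\mathbf{1}[i\in S])_{i=1}^{d(n-k)}$ is uniform on $\{0,1\}^{d(n-k)}$ conditioned on having exactly $dk$ ones, hence negatively associated by the classical theorem of Joag-Dev and Proschan. Since the $T_v$ are disjoint and each $\mathbf{1}[S\cap T_v=\emptyset]$ is a monotone non-increasing function of the coordinates in $T_v$, the standard closure property (monotone functions of disjoint subsets of NA variables are NA) yields that the collection $\{\mathbf{1}[S\cap T_v=\emptyset]\}_{v\in V\setminus\sigma}$ is itself negatively associated. The Chernoff bound for sums of NA random variables (Panconesi--Srinivasan, Dubhashi--Ranjan) then gives the identical bound as in the independent case,
$$\P_{\mathcal{P}_k(n)}\bigl(|N_\sigma-\E_{\mathcal{P}_k(n)}N_\sigma|\geq t\,\E_{\mathcal{P}_k(n)}N_\sigma\bigr)\leq 2\exp\bigl(-\min(t,t^2)\E_{\mathcal{P}_k(n)}N_\sigma/3\bigr).$$
A direct hypergeometric calculation gives $\E_{\mathcal{P}_k(n)}N_\sigma = (n-k)\binom{d(n-k)-d}{dk}/\binom{d(n-k)}{dk}$, which in the small-$\alpha$ regime relevant to Theorem \ref{thm5} equals $n(1-\alpha)e^{-d\alpha}(1+o(1))$, matching the stated bound.

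One small technical wrinkle is the passage from the configuration model to the uniform simple $d$-regular graph; for fixed $d$ the probability that the matching yields a simple graph is bounded below by a positive constant depending only on $d$, so the inequality transfers up to a constant factor absorbed into the $1/3$ in the exponent (for $n$ large). The main obstacle I anticipate is bookkeeping: carefully verifying that the conditioning on $\sigma$ being an independent set is a monotone event on the pairing and that the closure properties of NA are invoked correctly under this conditional measure. The heart of the argument, however, is a single black box (Joag-Dev--Proschan NA for uniform random subsets of fixed size), so once the reduction to the hypergeometric sampling picture is in place the Chernoff bound follows in the same way as Lemma \ref{nchernoff}.
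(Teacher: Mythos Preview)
Your proposal is correct and takes a genuinely different route from the paper. The paper instead identifies the degree vector $(D_v)_{v\notin\sigma}$ (where $D_v$ counts neighbors of $v$ in $\sigma$) with a family of independent $\mathrm{Binomial}(d,\alpha/(1-\alpha))$ variables conditioned on their sum equalling $dk$, writes down $\P_{\mathcal{P}_k(n)}(N_\sigma=j)$ exactly as a ratio of factorials, applies Stirling's formula, and then extracts the tail bound from uniform log-concavity of the resulting exponent as a function of $\gamma=j/n$. Your negative-association argument is more conceptual: it explains structurally why the bound inherits exactly the Erd\H{o}s--R\'enyi form of Lemma~\ref{nchernoff}, and avoids the explicit Stirling computation altogether by invoking the Joag-Dev--Proschan theorem and the Dubhashi--Ranjan/Panconesi--Srinivasan Chernoff bound as black boxes. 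The paper's approach, while heavier, has the side benefit of making the full distribution of $N_\sigma$ visible. One small correction to your write-up: conditioning on simplicity (an event of probability $c_d>0$) multiplies tail probabilities by at most $1/c_d$, which produces a constant prefactor \emph{outside} the exponential rather than something absorbable into the $1/3$ in the exponent; this is harmless for the downstream application to Theorem~\ref{thm5}, and in any case the paper's own argument works implicitly in the configuration model without addressing this transition.
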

\begin{proof}
Let $\alpha\colonequals k/n$.  Let $h(x)\colonequals x\log x$.  Let $v\in\{1,\ldots,n\}$ be a vertex with $v\notin\sigma$.  Let $D_{v}$ be the number of vertices in $\sigma$ that $v$ is connected to.  Then $\{D_{v}\}_{v\notin\sigma}$ have the same distribution as a set of independent binomial random variables $\{B_{v}\}_{v\notin\sigma}$ each with parameters $d$ and $p\colonequals\alpha/(1-\alpha)$, conditioned on $\sum_{v\notin\sigma}B_{v}=nd\alpha$.  Then $N_{\sigma}=\sum_{v\notin\sigma}1_{\{B_{v}=0\}}$ and for any $0\leq j\leq nd(1-\alpha)$, (writing $j=\beta n$)
\begin{flalign*}
\P_{\mathcal{P}_{k}(n)}(N_{\sigma}=j)
&=\P(N_{\sigma}=\beta n)
=\frac{\P(\mathrm{exactly}\,\,\beta n\,\,\mathrm{of}\,\,B_{v}\,\,\mathrm{are}\,\,0\,\,\mathrm{and}\,\,\sum_{v\notin\sigma}B_{v}=nd\alpha)}{\P(\sum_{v\notin\sigma}B_{v}=nd\alpha)}\\
&=\frac{\binom{n(1-\alpha)}{\beta n}(1-p)^{nd\beta}\binom{nd(1-\alpha-\beta)}{nd\alpha}p^{nd\alpha}(1-p)^{nd(1-2\alpha-\beta)}}
{\binom{nd(1-\alpha)}{nd\alpha}p^{nd\alpha}(1-p)^{nd(1-2\alpha)}}\\
&=\frac{\frac{[n(1-\alpha)]!}{[\beta n]![n(1-\alpha-\beta)]!}(1-p)^{nd\beta}\frac{[nd(1-\alpha-\beta)]!}{[nd(1-2\alpha-\beta)]![nd\alpha]!}(1-p)^{-d\beta n}}{\frac{[nd(1-\alpha)]!}{[nd(1-2\alpha)]![nd\alpha]!}}\\
\end{flalign*}
\begin{flalign*}
&=\frac{[n(1-\alpha)]!}{[\beta n]![n(1-\alpha-\beta)]!}\frac{[nd(1-\alpha-\beta)]!}{[nd(1-\alpha)]!}\frac{[nd(1-2\alpha)]!}{[nd(1-2\alpha-\beta)]!}\\
&=\exp\Big(n\Big[h(1-\alpha)-h(\beta)-h(1-\alpha-\beta)+h(d(1-\alpha-\beta))\\
&\qquad\qquad-h(d(1-\alpha))+h(d(1-2\alpha))-h(d(1-2\alpha-\beta))\Big]\Big)\\
&=\exp\Big(n\Big[h(1-\alpha)-h(\beta)-h(1-\alpha-\beta)+h(d(1-\alpha-\beta))\\
&\qquad\qquad-h(d(1-\alpha))+h(d(1-2\alpha))-h(d(1-2\alpha-\beta))\Big]\Big).
\end{flalign*} 
\begin{flalign*}
\E_{\mathcal{P}_{k}(n)} N_{\sigma}&=\abs{\sigma^{c}}\P(B_{1}=0)
=(1-\alpha)n\frac{\P(B_{v}=0\,\,\mathrm{and}\,\,\sum_{v\notin\sigma,v\neq 1}B_{v}=nd\alpha)}{\P(\sum_{v\notin\sigma}B_{v}=nd\alpha)}\\
&=(1-\alpha)n\frac{(1-p)^{d}\binom{nd(1-\alpha)-d}{nd\alpha}p^{nd\alpha}(1-p)^{nd(1-2\alpha)-d}}{\binom{nd(1-\alpha)}{nd\alpha}p^{nd\alpha}(1-p)^{nd(1-2\alpha)}}\\
&=(1-\alpha)n\frac{[d[n(1-\alpha)-1]]!}{[dn(1-\alpha)]!}\frac{[nd(1-2\alpha)]!}{[nd(1-2\alpha)-d]!}\\
&=(1-\alpha)n\frac{(nd(1-2\alpha))(nd(1-2\alpha)-1)\cdots(nd(1-2\alpha)-d+1)}{(nd(1-\alpha))(nd(1-\alpha)-1)\cdots(nd(1-\alpha)-d+1)}.
\end{flalign*}
As $n\to\infty$, $\E_{\mathcal{P}_{k}(n)} N_{\sigma}/n\approx [(1-2\alpha)/(1-\alpha)]^{d}=(1-\alpha)(1-p)^{d}\approx (1-\alpha)e^{-\alpha d}$.

\begin{flalign*}
\P_{\mathcal{P}_{k}(n)}(N_{\sigma}\geq j)
&=\sum_{\gamma=\beta,\beta+1/n,\ldots,1}\exp\Big(n\Big[h(1-\alpha)-h(\gamma)-h(1-\alpha-\gamma)+h(d(1-\alpha-\gamma))\\
&\qquad\qquad-h(d(1-\alpha))+h(d(1-2\alpha))-h(d(1-2\alpha-\gamma))\Big]\Big)\\
&\approx \int_{\beta}^{1}\exp\Big(n\Big[h(1-\alpha)-h(\gamma)-h(1-\alpha-\gamma)+h(d(1-\alpha-\gamma))\\
&\qquad\qquad-h(d(1-\alpha))+h(d(1-2\alpha))-h(d(1-2\alpha-\gamma))\Big]\Big)d\gamma.
\end{flalign*}%
As a function of $\gamma$, the integrand is uniformly log-concave, so the result follows.

\end{proof}

\begin{lemma}[\embolden{Lower Bound}]\label{lowerboundreg}
Let $\alpha\colonequals k/n$ and let $\beta n$ be the size of the largest size independent set in the graph.
Then, with probability at least $1-e^{-t^{2}/(2dn)}$,
$$X_{k,n}\geq e^{-t\log(1-\alpha/\beta)}e^{-n[(\beta-\alpha)\log(\beta-\alpha)-\beta\log\beta+(d-1)(1-\alpha)\log(1-\alpha)-(d/2)(1-2\alpha)\log(1-2\alpha)]}\E X_{k,n}.$$
\end{lemma}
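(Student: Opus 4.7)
The plan is to mirror the proof of Lemma~\ref{lowerbound}, replacing the Erd\"{o}s-Renyi ingredients with their random $d$-regular counterparts. Two inputs are needed: (i) an explicit formula for $\E X_{k,n}$ in the $d$-regular model, and (ii) concentration of the independence number $Y_n$ around its mean $\beta n$. Once both are in hand, the deterministic bound $X_{k,n}\geq\binom{Y_n}{\alpha n}$ (any $\alpha n$-subset of an independent set is independent) combined with (ii) gives $X_{k,n}\geq\binom{\beta n-t}{\alpha n}$ with probability at least $1-e^{-t^{2}/(2dn)}$, and Stirling-type estimates on $\binom{\beta n-t}{\alpha n}/\E X_{k,n}$ then yield the claimed exponential factor.

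For (i), a standard configuration-model computation (counting pairings in which all $\alpha nd$ half-edges at a fixed $\alpha n$-subset are matched to half-edges outside the subset) gives
\begin{equation*}
\E X_{k,n}=\binom{n}{\alpha n}\cdot\frac{2^{\alpha nd}\,(nd/2)!\,((1-\alpha)nd)!}{((1-2\alpha)nd/2)!\,(nd)!},
\end{equation*}
and Stirling's formula collapses this to leading order to
\begin{equation*}
\tfrac{1}{n}\log\E X_{k,n}=-\alpha\log\alpha+(d-1)(1-\alpha)\log(1-\alpha)-\tfrac{d}{2}(1-2\alpha)\log(1-2\alpha)+o(1).
\end{equation*}
Combining this with $\tfrac{1}{n}\log\binom{\beta n}{\alpha n}=\beta\log\beta-\alpha\log\alpha-(\beta-\alpha)\log(\beta-\alpha)+o(1)$ and the elementary ratio identity $\binom{\beta n-t}{\alpha n}/\binom{\beta n}{\alpha n}=\prod_{i=0}^{t-1}(\beta n-\alpha n-i)/(\beta n-i)$, whose logarithm is $t\log(1-\alpha/\beta)$ up to lower-order terms, produces exactly the exponential factor appearing in the statement.

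For (ii), since the edges of a uniform $d$-regular graph are not independent, the Shamir-Spencer edge-exposure martingale used in Lemma~\ref{lowerbound} is unavailable and must be replaced by a martingale inside the configuration model. Exposing the random matching of the $nd$ half-edges sequentially yields a filtration whose increments change $O(1)$ edges of the underlying multigraph and hence $Y_n$ by $O(1)$; Azuma-Hoeffding (Lemma~\ref{azumah}) then yields $\P(Y_n<\E Y_n-t)\leq e^{-t^{2}/(2dn)}$ after an appropriate choice of filtration. Because the configuration-model sample is simple with probability bounded away from $0$ for fixed $d$, the same concentration transfers to the uniform distribution on simple $d$-regular graphs.

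The main obstacle I expect is step (ii): the absence of a direct edge-exposure martingale for uniform random $d$-regular graphs forces one to run the concentration argument inside the configuration model, verify the bounded-difference hypothesis along the chosen filtration, and carefully transfer the resulting tail bound back to the simple-graph law. A secondary technicality, mirroring the Stirling bookkeeping in the proof of Lemma~\ref{lowerbound}, is ensuring that the polynomial prefactors arising in the logarithm of $\E X_{k,n}$ are small enough to be absorbed into the $o(1)$ in the exponent without affecting the claimed $e^{-t\log(1-\alpha/\beta)}$ factor.
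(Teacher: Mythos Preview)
Your proposal is correct and follows essentially the same route as the paper. The paper is in fact more terse than you are: for (i) it simply cites \cite[Lemma~2.1]{ding16} for the asymptotic $\tfrac{1}{n}\log\E X_{k,n}=-\alpha\log\alpha+(d-1)(1-\alpha)\log(1-\alpha)-\tfrac{d}{2}(1-2\alpha)\log(1-2\alpha)+o(1)$ rather than deriving it, and for (ii) it writes only ``the size of the largest independent set concentrates around its expected value by the Azuma--Hoeffding inequality (using the edge-revealing filtration)'' without spelling out the configuration-model filtration or the transfer to simple graphs that you (rightly) flag as the main technical point.
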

\begin{proof}
By Stirling's formula, \cite[Lemma 2.1]{ding16}
$$
\E X_{k,n}= n^{O(1)}\exp\Big(n\Big((d-1)(1-\alpha)\log(1-\alpha)-\alpha\log\alpha-(d/2)(1-2\alpha)\log(1-2\alpha)\Big)\Big).
$$
Also, the size of the largest independent set concentrates around it's expected value by the Azuma-Hoeffding inequality (using the edge-revealing filtration).  So a lower bound for $X_{k,n}$ is given by taking size $k=\alpha n$ subsets of this large independent set, i.e.
$$\P\left(X_{k,n}\geq \binom{\beta n-t}{\alpha n}\right)\geq e^{-t^{2}/[2dn]},\qquad\forall\,t\geq0.$$
Therefore
\begin{flalign*}
\frac{\E X_{k,n}}{\binom{\beta n}{\alpha n}}
&=\frac{(\alpha n)![(\beta-\alpha)n]!}{(\beta n)!}n^{O(1)}\exp\Big(n\Big((d-1)(1-\alpha)\log(1-\alpha)-\alpha\log\alpha\\
&\qquad\qquad-(d/2)(1-2\alpha)\log(1-2\alpha)\Big)\Big).\\
&=n^{O(1)}\exp\Big(n\Big((\beta-\alpha)\log(\beta-\alpha)-\beta\log\beta+(d-1)(1-\alpha)\log(1-\alpha)\\
&\qquad\qquad\qquad\qquad\qquad\qquad-(d/2)(1-2\alpha)\log(1-2\alpha)\Big)\Big).
\end{flalign*}
More specifically,
\begin{flalign*}
\frac{\binom{n}{\alpha n}}{\binom{\beta n-t}{\alpha n}}(1-d/n)^{\binom{\alpha n}{2}}
&=n^{O(1)}\exp\Big(n\Big((\beta-\alpha)\log(\beta-\alpha)-\beta\log\beta+(d-1)(1-\alpha)\log(1-\alpha)\\
&\qquad\qquad\qquad\qquad\qquad\qquad-(d/2)(1-2\alpha)\log(1-2\alpha)\Big)\Big)e^{t\log(1-\alpha/\beta)}.
\end{flalign*}
That is, with probability at least $1-e^{-t^{2}/(2n)}$,
\begin{flalign*}
\frac{\E X_{k,n}}{X_{k,n}}
&\leq \frac{\E X_{k,n}}{\#\,\mbox{size $k$ subsets of the largest independent set}}\\
&\leq n^{O(1)}\exp\Big(n\Big((\beta-\alpha)\log(\beta-\alpha)-\beta\log\beta+(d-1)(1-\alpha)\log(1-\alpha)\\
&\qquad\qquad\qquad\qquad\qquad\qquad-(d/2)(1-2\alpha)\log(1-2\alpha)\Big)\Big)e^{t\log(1-\alpha/\beta)}.
\end{flalign*}

\end{proof}

\begin{proof}[Proof of Theorem \ref{thm5}]
Denote $\alpha\colonequals k/n$.  From Lemma \ref{lowerboundreg},with probability at least $1-e^{-t^{2}/(2dn)}\equalscolon1-\epsilon_{2}$,
$$X_{k,n}\geq e^{-t\log(1-\alpha/\beta)}e^{-n[(\beta-\alpha)\log(\beta-\alpha)-\beta\log\beta+(d-1)(1-\alpha)\log(1-\alpha)-(d/2)(1-2\alpha)\log(1-2\alpha)]}\E X_{k,n}.$$
Let $C$ denote this event.

Then, let $A$ be the event that $(G,\sigma)$ satisfies $\abs{N_{\sigma}-\E_{\mathcal{P}_{k}(n)}  N_{\sigma}}>s\E N_{\sigma}$.  From Lemma \ref{nchernoffreg},
$$\P_{\mathcal{P}_{k}(n)}(A)\leq 2e^{- n\min(s,s^{2})(1-\alpha)e^{-d\alpha}/3}\equalscolon\epsilon_{1}\quad\forall\,t\geq0.$$

We therefore apply Lemma \ref{ratiolemma} so that, for $\gamma\colonequals e^{-n\epsilon'}$, and $t\colonequals\sqrt{n\log n}$
\begin{flalign*}
&\P
\Big(\forall\,0\leq k\leq n,\,\,\frac{(1-s)(\E_{\mathcal{P}_{k}(n)}N_{\sigma}-\gamma)}{k+1}
\leq\frac{X_{k+1,n}}{X_{k,n}}
\leq \frac{(1+s')\E_{\mathcal{P}_{k}(n)}N_{\sigma}}{k+1}+n\frac{\gamma}{k+1}\Big)\\
&\qquad\geq 1-\frac{n}{\gamma}\Big(\frac{\epsilon_{1}}{c_{1}}+\epsilon_{2}\Big)
\geq
1-ne^{n\epsilon'}2e^{- n\min(s,s^{2})(1-\alpha)e^{-d\alpha}/3}
e^{t\log(1-\alpha/\beta)}\\
&\qquad\qquad\qquad\cdot\sqrt{\frac{\beta-\alpha}{\beta(1-\alpha)}}e^{d\alpha}e^{n[(\beta-\alpha)\log(\beta-\alpha)-\beta\log\beta-(1-\alpha)\log(1-\alpha)-d\alpha^{2}/2]}
-ne^{n\epsilon'}e^{-t^{2}/(2n)}.
\end{flalign*}

So, if we choose $s\colonequals\sqrt{3.03\frac{-\beta\log\beta-(1-\alpha)\log(1-\alpha)+(\beta-\alpha)\log(\beta-\alpha)-d\alpha^{2}/2}{(1-\alpha)e^{-d\alpha}}}$ and $t\colonequals\sqrt{n\log n}$, then we can also take $\gamma\colonequals e^{-n\epsilon'}$ for some $\epsilon'>0$.

For $d$ sufficiently large, by Theorem \ref{friezethm} (whose conclusion holds in this case by e.g. \cite[page 21]{wormald99}), the largest independent set has size $n\beta$, where
\begin{equation}\label{betadef2}
\beta\colonequals (2/d)(\log d-\log\log d-\log 2+1).
\end{equation}

Let $\delta\colonequals\alpha/\beta$ so that $0<\delta<1$.  Then
\begin{flalign*}
s&\colonequals\sqrt{100\frac{-\beta\log\beta-(1-\alpha)\log(1-\alpha)+(\beta-\alpha)\log(\beta-\alpha)-d\alpha^{2}/2-d\alpha/n}{(1-\alpha)e^{-d\alpha}}}\\
&\approx\sqrt{100\frac{-\beta\log\beta+\beta(1-\delta)\log(\beta(1-\delta))-d\beta^{2}\delta^{2}/2-d\beta\delta/n}{(1-\alpha)e^{-d\delta\beta}}}\\
&\stackrel{\eqref{betadef2}}{\approx}\sqrt{100\frac{-\beta\log\beta+\beta(1-\delta)\log\beta+\beta(1-\delta)\log(1-\delta)-d\beta^{2}\delta^{2}/2-d\beta\delta/n}{(1-\alpha)e^{-2\delta (\log d-\log\log d-\log 2+1)}}}\\
&=\sqrt{100\frac{-\delta\beta\log\beta+\beta(1-\delta)\log(1-\delta)-d\beta^{2}\delta^{2}/2-d\beta\delta}{(1-\alpha)e^{-2\delta (\log d-\log\log d-\log 2+1)}}}\\
&\approx\sqrt{100\frac{-\delta(2/d)\log d \log(2/d)+(2/d)\log(d)(1-\delta)\log(1-\delta)-4(1/d)\log^{2}(d)\delta^{2}/2}{d^{-2\delta}}}\\
&\approx\sqrt{100\frac{-\delta(2/d)\log d \log(2/d)-4(1/d)\log^{2}(d)\delta^{2}/2}{d^{-2\delta}}}\\
&\approx d^{\delta-1/2}\sqrt{200}\sqrt{\delta\log d}\sqrt{\log(d/2)-\delta\log(d)}
=d^{\delta-1/2}\sqrt{200}\sqrt{\delta(1-\delta)}\log d.
\end{flalign*}

So, if $\delta<1/2$, $s\to0$ as $d\to\infty$.  So, we define $\epsilon\colonequals\abs{\delta-1/2}$ to conclude.

\end{proof}

\section{Random Trees}

Let $Y$ be a random variable defined on trees.  Suppose we use the Aldous-Broder algorithm \cite{broder89,aldous90} to generate a random tree.  For any integer $0\leq k\leq n-1$, let $\mathcal{F}_{k}$ denote the set of edges revealed by the algorithm.  Each time a new edge is added to the tree, there is another piece of information given, so this naturally yields a (Doob) martingale.  For any integer $0\leq k\leq n-1$, let $Y_{k}\colonequals \E(Y\,|\,\mathcal{F}_{k})$.

\begin{lemma}[\embolden{Azuma-Hoeffding for Aldous-Broder}]\label{azumahab}
Assume that $\abs{Y_{k}-Y_{k-1}}\leq 1$ for all $1\leq k\leq n-1$.  Then
$$\P(|Y-E Y|>t\E Y)\leq 2e^{-\frac{t^{2}(\E Y)^{2}}{2(n-1)}},\qquad\forall\,t>0.$$
\end{lemma}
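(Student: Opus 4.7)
The plan is to apply the Azuma-Hoeffding inequality, Lemma \ref{azumah}, directly to the Doob martingale $Y_{0}, Y_{1}, \ldots, Y_{n-1}$ built from the filtration $(\mathcal{F}_{k})_{0 \le k \le n-1}$ determined by the edges revealed during the Aldous-Broder algorithm. I would first observe that $(Y_{k})$ is indeed a martingale by the tower property of conditional expectation, since $\mathcal{F}_{k-1} \subset \mathcal{F}_{k}$ and $Y_{k} = \E(Y \mid \mathcal{F}_{k})$ imply $\E(Y_{k} \mid \mathcal{F}_{k-1}) = Y_{k-1}$.

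Next I would identify the two endpoints. Because $\mathcal{F}_{0}$ records no information about the tree, $Y_{0} = \E Y$ is deterministic; because a tree on $n$ vertices has exactly $n-1$ edges, once all of them have been revealed the tree is fully determined, so $Y_{n-1} = Y$. Combined with the hypothesis $\abs{Y_{k} - Y_{k-1}} \le 1$ for $1 \le k \le n-1$, Lemma \ref{azumah} applies with $c = 1$ and with the martingale length equal to $n - 1$, yielding
$$\P\bigl(\abs{Y - \E Y} > s\bigr) \le 2 e^{-s^{2}/[2(n-1)]}, \qquad \forall\, s > 0.$$
Substituting $s = t \E Y$ produces the stated bound.

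The proof of the lemma itself presents no real obstacle; the only point that demands care is the bookkeeping observation that the Aldous-Broder martingale has $n - 1$ increments (one per revealed edge) rather than $n$, which is why the factor $2(n-1)$ appears in the denominator of the exponent rather than $2n$. The genuine difficulty is deferred to later applications of this lemma, where a specific quantity $Y$ of interest (e.g., related to the size or the number of independent sets of given size in the random tree) will have to be shown to satisfy the unit bounded-difference condition under the Aldous-Broder edge-revealing filtration.
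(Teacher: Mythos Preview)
Your proof is correct and follows exactly the paper's approach: the paper's proof is the single sentence ``Apply the Azuma-Hoeffding Inequality, Lemma \ref{azumah},'' and you have simply unpacked the bookkeeping (identifying $Y_{0}=\E Y$, $Y_{n-1}=Y$, length $n-1$, and the substitution $s=t\E Y$) that this one-line appeal leaves implicit.
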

\begin{proof}
Apply the Azuma-Hoeffding Inequality, Lemma \ref{azumah}.
\end{proof}

Let $\sigma\subset\{1,\ldots,n\}$ with $\abs{\sigma}=k$.  Let $\alpha\colonequals k/n$.  Let $v\in\{1,\ldots,n\}$ be a vertex with $v\notin S$.  Let $C_{v}$ be the event that $v$ is connected to some vertex in $\sigma$. Let $N_{\sigma}\colonequals\sum_{v\notin S}1_{C_{v}^{c}}$ be the number of vertices not connected to $\sigma$.

\begin{lemma}\label{treecheaplem}
$$\E_{\mathcal{P}_{k}(n)} N_{\sigma}=n(1-\alpha)^{2}e^{-\alpha/(1-\alpha)}(1+O(1/n)).$$
$$\P_{\mathcal{P}_{k}(n)} (\abs{N_{\sigma}-\E\mathcal{P}_{k}(n) N_{\sigma}}>t)\leq2e^{-(n-1)\frac{t^{2}(1-\alpha)^{4}e^{-2\alpha/(1-\alpha)}}{2}(1+O(\log n/n))},\qquad\forall\,t>0.$$
\end{lemma}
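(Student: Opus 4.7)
My plan is to prove both claims via a precise tree count together with the Aldous--Broder Azuma--Hoeffding inequality (Lemma \ref{azumahab}). The expected-value half is a short combinatorial calculation, and the concentration half reduces to verifying bounded martingale differences on the edge-revealing filtration.

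For the expected value, by linearity of expectation and the permutation invariance of $\mathcal{P}_k(n)$ on $\sigma^c$, I would reduce to computing $\P_{\mathcal{P}_k(n)}(v \not\sim \sigma)$ for a single fixed $v \notin \sigma$. Since the event ``$\sigma$ is independent and $v$ is not adjacent to any element of $\sigma$'' is exactly the event ``$\sigma \cup \{v\}$ is independent'', Lemma \ref{pkrk} expresses this probability as $t(n,k+1)/t(n,k)$, where $t(n,k)$ denotes the number of labeled trees on $[n]$ in which a specified $k$-subset is independent. I would compute $t(n,k) = n^{n-k-1}(n-k)^{k-1}$ via Kirchhoff's matrix-tree theorem applied to $K_n \setminus E(K_\sigma)$: the Laplacian decomposes along $\mathrm{span}(\mathbf{1}_\sigma,\mathbf{1}_{\sigma^c})$ and its orthogonal complement, yielding nonzero eigenvalues $n$ (with multiplicity $n-k$) and $n-k$ (with multiplicity $k-1$). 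Substituting and expanding the logarithm,
$$\P_{\mathcal{P}_k(n)}(v \not\sim \sigma) = (1-\alpha)\Bigl(1-\tfrac{1}{n(1-\alpha)}\Bigr)^{\alpha n} = (1-\alpha)e^{-\alpha/(1-\alpha)}(1+O(1/n)),$$
and multiplying by $(n-k) = n(1-\alpha)$ gives the first claim.

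For the concentration, I would apply Lemma \ref{azumahab} to $Y := N_\sigma$ under $\mathcal{P}_k(n)$, which is realized by Aldous--Broder on the subgraph $K_n \setminus E(K_\sigma)$ (a uniform spanning tree of this graph is a uniform random tree on $[n]$ with $\sigma$ independent). The key estimate is $|Y_k - Y_{k-1}| \leq 1$: I would establish this by coupling two walk trajectories that agree before the $k$-th transition but choose different $k$-th edges, then re-coupling after they reach a common visited vertex, so that the two resulting trees differ in only $O(1)$ edges incident to $\sigma$. Combined with the asymptotic $\E N_\sigma = n(1-\alpha)^2 e^{-\alpha/(1-\alpha)}(1+O(1/n))$ from the first part, Lemma \ref{azumahab} yields the claimed concentration bound, with the $(1+O(\log n/n))$ factor absorbing the Taylor remainder in $\E N_\sigma$ together with the implicit coupling constant. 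The main obstacle is precisely this bounded-differences step under the Aldous--Broder filtration, since swapping the $k$-th revealed edge changes the subsequent walk entirely; if a direct coupling proves awkward, a fallback is to apply Talagrand's convex distance inequality (Theorem \ref{talagrand}) to $N_\sigma$ as a function of a (planted-compatible) Prüfer-type sequence, trading a sharp constant for a manifestly independent coordinate structure.
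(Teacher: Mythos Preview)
Your approach is essentially identical to the paper's: the expected value is computed via the matrix-tree count $t(n,k)=(n-k)^{k-1}n^{n-k-1}$ (this is exactly Lemma \ref{lemma31}), and the concentration is obtained by invoking the Aldous--Broder Azuma--Hoeffding inequality (Lemma \ref{azumahab}) with $Y=N_\sigma$. The paper's proof is in fact terser than yours on the concentration step---it simply asserts that the second inequality ``follows from the first and Lemma \ref{azumahab}'' without verifying the bounded-differences hypothesis---so the obstacle you flag (controlling $|Y_k-Y_{k-1}|$ when the $k$-th Aldous--Broder edge is perturbed) is real but is left implicit in the paper as well; your edge-swap observation that $N_\sigma$ changes by at most $1$ when a single tree edge is replaced is the right Lipschitz input, and this is what underlies the paper's appeal to Lemma \ref{azumahab}.
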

\begin{proof}
The first equality follows from Lemma \ref{lemma31}, i.e. the matrix-tree theorem, since the probability that vertices $\{1,\ldots,k+1\}$ form an independent set, given that the vertices $\{1,\ldots,k\}$ form an independent set is
$$\frac{(n-k-1)^{k}n^{n-k-2}}
{(n-k)^{k-1}n^{n-k-1}}
=\Big(1-\frac{k}{n}\Big)\Big(1-\frac{1}{n-k}\Big)^{k}
=(1-\alpha)e^{-\alpha/(1-\alpha)}(1+o(1)).
$$
So, $\E_{\mathcal{P}_{k}(n)} N_{\sigma}$ is $n-k$ times this number.  The second inequality follows from the first and Lemma \ref{azumahab}.
\end{proof}

\begin{lemma}[\embolden{Lower Bound for Trees}]\label{treelowerbound}
For a uniformly random labelled tree on $n$ vertices, we have the following deterministic bound (i.e. with probability one):
$$X_{k,n}\geq(1+o(1))e^{n[(2-3\alpha)\log(1-\alpha)-(1-2\alpha)\log(1-2\alpha)]}\E X_{k,n}.$$
\end{lemma}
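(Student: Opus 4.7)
The plan is to combine a universal, deterministic lower bound on $X_{k,n}(T)$ valid for every labeled tree $T$, together with an exact formula for $\E X_{k,n}$ averaged over the uniformly random tree, and then extract the claimed exponential factor via Stirling's formula.

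For the first ingredient, I invoke Theorem 5.1 of \cite{wingard95}, which asserts that among all trees on $n$ vertices the path $P_n$ minimizes $X_{k,n}(T)$ for every fixed $k$. Since the number of independent sets of size $k$ in the path $P_n$ equals the classical quantity $\binom{n-k+1}{k}$ (counting $k$-subsets of $\{1,\ldots,n\}$ with no two consecutive elements), this yields the uniform deterministic bound $X_{k,n}(T)\geq\binom{n-k+1}{k}$ for every tree $T$ on $n$ vertices. For the second ingredient, I compute $\E X_{k,n}$ exactly by the Cayley-type enumeration already used in the proof of Lemma \ref{treecheaplem}: the number of labeled trees on $\{1,\ldots,n\}$ in which a prescribed $k$-subset is independent equals $(n-k)^{k-1}n^{n-k-1}$, which reduces to Cayley's $n^{n-2}$ at $k=0$. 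Dividing by $n^{n-2}$ and multiplying by the $\binom{n}{k}$ choices of $k$-subset gives
$$\E X_{k,n}=\binom{n}{k}\bigl(1-\alpha\bigr)^{k-1},\qquad\alpha\colonequals k/n.$$

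The final step is a direct Stirling computation. Writing $n^{O(1)}$ for polynomial prefactors, one obtains
$$\binom{n-k+1}{k}=\exp\!\Bigl(n\bigl[(1-\alpha)\log(1-\alpha)-\alpha\log\alpha-(1-2\alpha)\log(1-2\alpha)\bigr]\Bigr)\cdot n^{O(1)},$$
$$\E X_{k,n}=\exp\!\Bigl(n\bigl[-\alpha\log\alpha-(1-\alpha)\log(1-\alpha)+\alpha\log(1-\alpha)\bigr]\Bigr)\cdot n^{O(1)}.$$
Taking the ratio, the $-\alpha\log\alpha$ contributions cancel, and the coefficients of $\log(1-\alpha)$ combine as $(1-\alpha)-(2\alpha-1)=2-3\alpha$, producing exactly the exponent $n[(2-3\alpha)\log(1-\alpha)-(1-2\alpha)\log(1-2\alpha)]$ appearing in the statement; the $n^{O(1)}$ prefactors are absorbed into the $(1+o(1))$ factor.

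The only nontrivial external input is Wingard's path-extremality theorem; the rest is elementary asymptotic analysis. A mild point of care is that the Stirling approximation of $\binom{(1-\alpha)n+1}{\alpha n}$ must be uniform in $\alpha$, which requires $\alpha$ to stay bounded away from $1/2$; this holds automatically in the regime of the intended application (Theorem \ref{mainthm2}, where $\alpha<0.26543$).
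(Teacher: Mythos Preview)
Your proof is correct and follows essentially the same route as the paper: both invoke Wingard's deterministic lower bound $X_{k,n}\geq\binom{n-k+1}{k}$, compute $\E X_{k,n}$ via the matrix-tree enumeration of Lemma~\ref{lemma31}, and extract the exponent by Stirling. Your version is slightly more careful in writing $(1-\alpha)^{k-1}$ rather than the paper's $(1-\alpha)^{\alpha n}$, and in flagging the need for $\alpha$ bounded away from $1/2$; these are cosmetic differences only.
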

\begin{proof}
Rather than repeating the proof of Lemma \ref{lowerbound}, we use a deterministic bound for $X_{k,n}$ from \cite{wingard95}, as pointed out to use by David Galvin.

From Lemma \ref{lemma31}, and since there are $n^{n-2}$ labelled trees on $n$ vertices,
\begin{equation}\label{three1t}
\E X_{k,n}=\binom{n}{\alpha n}(1-\alpha)^{\alpha n}.
\end{equation}
Theorem 5.1 from \cite{wingard95} says that
$$X_{k,n}\geq\binom{n-k+1}{k}=\binom{n(1-\alpha)-1}{\alpha n}.$$
So, from Stirling's formula,
\begin{flalign*}
\frac{X_{k,n}}{\E X_{k,n}}
&\stackrel{\eqref{three1t}}{\geq}(1+o(1))e^{n[(1-\alpha)\log(1-\alpha)-\alpha\log\alpha-(1-2\alpha)\log(1-2\alpha)]}\\
&\qquad\qquad\qquad\cdot e^{-n[-\alpha\log\alpha-(1-\alpha)\log(1-\alpha)+\alpha\log(1-\alpha)]}\\
&=(1+o(1))e^{n[(2-3\alpha)\log(1-\alpha)-(1-2\alpha)\log(1-2\alpha)]}.
\end{flalign*}

\end{proof}

\subsection{Conditioning Arguments}

The concentration inequality in Lemma \ref{treecheaplem} can be used with Lemma \ref{ratiolemma} to show that roughly the first $30\%$ of the nonzero independent set sequence of a random tree is increasing.  However, the constants appearing in Lemma \ref{treecheaplem} are not sharp.  They can be improved to the following form, yielding a better unimodality result.

\begin{lemma}[{\cite{arratia20}.}]\label{treecheaplemv2}
Let $T$ be a uniformly random tree on $n$ vertices, conditioned on $\sigma$ being an independent set.  Let $N_{\sigma}$ be the number of vertices in $\sigma^{c}$ not connected to $\sigma$.  Then
$$\P_{\mathcal{P}_{k}(n)} (\abs{N_{\sigma}-\E\mathcal{P}_{k}(n) N_{\sigma}}>s\E\mathcal{P}_{k}(n) N_{\sigma}+1)
\leq e^{-\min(s,s^{2})n(1-\alpha)^{2}e^{-\alpha/(1-\alpha)}/3}.
$$
More generally,
$$\P_{\mathcal{P}_{k}(n)}( N_{\sigma}<(1-s)\E\mathcal{P}_{k}(n) N_{\sigma}-1)\leq
e^{-s^{2}n(1-\alpha)^{2}e^{-\alpha/(1-\alpha)}/2},\qquad\forall\,0<s<1
$$
$$\P_{\mathcal{P}_{k}(n)} (N_{\sigma}>(1+s)\E\mathcal{P}_{k}(n) N_{\sigma}+1)\leq
e^{-s^{2}n(1-\alpha)^{2}e^{-\alpha/(1-\alpha)}/(2+s)},\qquad\forall\,s\geq0.
$$
\end{lemma}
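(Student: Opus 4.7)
The plan is to sharpen the martingale-based concentration of Lemma \ref{treecheaplem} into a Chernoff-type bound matching what one would get if $N_\sigma$ were a sum of i.i.d.\ Bernoulli indicators with mean $p \colonequals (1-\alpha)e^{-\alpha/(1-\alpha)}$. Note that $\E_{\mathcal{P}_k(n)}N_\sigma = n(1-\alpha)^2 e^{-\alpha/(1-\alpha)}(1+O(1/n))$ is precisely $(n-k)p$ up to an $O(1/n)$ multiplicative correction; the additive $\pm1$ terms in the statement are designed to absorb exactly this correction, so the task is to reproduce the standard one-sided Chernoff bounds for $\mathrm{Bin}(n-k,p)$.

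First, I would extend the matrix-tree/Cayley computation that produced $\E_{\mathcal{P}_k(n)}N_\sigma$ in Lemma \ref{treecheaplem} to an arbitrary subset $T \subset \sigma^c$ with $|T|=m$. Concretely, I would count the number of labeled trees on $[n]$ in which $\sigma = \{1,\dots,k\}$ is independent \emph{and} no vertex of $T$ is adjacent to $\sigma$, using the Laplacian whose off-diagonal $(i,j)$-entry is $0$ whenever $\{i,j\} \subset \sigma$ or $i \in \sigma, j \in T$, and $-1$ otherwise. The block structure (rows indexed by $\sigma$, $T$, and $U\colonequals\sigma^c\setminus T$) should collapse by standard cofactor manipulations to a closed-form count of the type $(n-k-m)^{k}\cdot q(n,k,m)$ for an explicit polynomial factor $q$.

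Second, dividing by the count $(n-k)^{k-1} n^{n-k-1}$ from Lemma \ref{treecheaplem} yields $\P_{\mathcal{P}_k(n)}(T \subset \{v \in \sigma^c : C_v^c\})$ in closed form, from which I would compute the factorial moments
\begin{equation*}
\E_{\mathcal{P}_k(n)} \binom{N_\sigma}{m} = \binom{n-k}{m}\, \P_{\mathcal{P}_k(n)}(\{1,\dots,m\} \text{ isolated from } \sigma),
\end{equation*}
and show that they are bounded above by the factorial moments $\binom{n-k}{m}p^{m}$ of $\mathrm{Bin}(n-k,p)$, up to a factor $(1+O(m/n))$ that is responsible for the additive $\pm 1$ in the statement. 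Equivalently, the indicators $\{1_{C_v^c}\}_{v\in\sigma^c}$ are \emph{negatively upper orthant dependent}: forcing more vertices of $\sigma^c$ to be disconnected from $\sigma$ concentrates the edge budget from $\sigma$ onto fewer targets and so makes additional isolation less likely.

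Third, under this negative-dependence property, the standard Chernoff--Hoeffding bounds for sums of Bernoulli variables apply verbatim (Dubhashi--Ranjan), giving
\begin{equation*}
\P_{\mathcal{P}_k(n)}\bigl(N_\sigma > (1+s)(n-k)p + 1\bigr) \leq e^{-s^{2}(n-k)p/(2+s)}, \qquad \P_{\mathcal{P}_k(n)}\bigl(N_\sigma < (1-s)(n-k)p - 1\bigr) \leq e^{-s^{2}(n-k)p/2},
\end{equation*}
which, after replacing $(n-k)p$ by $\E_{\mathcal{P}_k(n)}N_\sigma$ and shifting the $\pm 1$ terms to absorb the $(1+O(1/n))$ discrepancy, are exactly the claimed bounds. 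The two-sided bound follows by combining both tails with $s \mapsto \min(s,s^{2})$ to unify into a single exponent.

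The main obstacle is the first step: cleanly evaluating the constrained matrix-tree determinant and verifying that the resulting joint probabilities dominate the binomial marginals \emph{with the correct constants}, not merely up to a multiplicative $O(1)$ factor. A purely combinatorial alternative --- which I suspect is what \cite{arratia20} carry out --- is to encode such trees via a weighted Pr\"ufer-type sequence in which the coordinates corresponding to $\sigma$ are forced to lie in $U$, making the product form of the joint probability manifest and the negative association transparent.
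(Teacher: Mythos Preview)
The paper does not prove this lemma. Immediately after the statement it reads: ``The proof of Lemma \ref{treecheaplemv2} will appear separately in \cite{arratia20}.'' So there is no in-paper argument to compare your proposal against.

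That said, your outline is a coherent plan and lands on the right target: once the indicators $\{1_{C_v^c}\}_{v\in\sigma^c}$ are shown to be negatively associated (or even just negatively upper-orthant dependent), the Dubhashi--Ranjan transfer of Chernoff bounds gives exactly the exponents $s^2\mu/2$ and $s^2\mu/(2+s)$ in the statement, with $\mu=(n-k)p$ and the $\pm1$ absorbing the $(1+O(1/n))$ discrepancy from Lemma \ref{treecheaplem}. Your speculation about a bijective encoding is well-founded: the acknowledgements thank Arratia ``for explaining to me the Joyal bijection along with several of its improvements,'' which strongly suggests that \cite{arratia20} proceeds via a Joyal/Pr\"ufer-type correspondence rather than a direct Laplacian determinant computation. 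The determinant route you sketch first would also work in principle, but extracting the negative-dependence inequality cleanly from the closed-form count is typically messier than reading it off a sequence encoding in which the relevant coordinates are visibly (conditionally) independent or negatively correlated.
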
%

The proof of Lemma \ref{treecheaplemv2} will appear separately in \cite{arratia20}.

\begin{proof}[Proof of Theorem \ref{mainthm2}]
Denote $\alpha\colonequals k/n$.  From Lemma \ref{treelowerbound}, with probability $1$,
$$X_{k,n}\geq (1+o(1))e^{n[(2-3\alpha)\log(1-\alpha)-(1-2\alpha)\log(1-2\alpha)]}$$
Let $C$ denote this event, and let $\epsilon_{2}\colonequals0$.

Then, let $A$ be the event that $(G,\sigma)$ satisfies $N_{\sigma}-\E_{\mathcal{P}_{k}(n)} N_{\sigma}<-s\E_{\mathcal{P}_{k}(n)}  N_{\sigma}-1=\E_{\mathcal{P}_{k}(n)}  N_{\sigma}(-s-1/\E_{\mathcal{P}_{k}(n)}  N_{\sigma})$.  From Lemma \ref{treecheaplemv2},
$$\P_{\mathcal{P}_{k}(n)}(A)\leq e^{-s^{2}n(1-\alpha)^{2}e^{-\alpha/(1-\alpha)}/2},\qquad\forall\,0<s<1.$$

We therefore apply Lemma \ref{ratiolemma} so that, for $\gamma\colonequals e^{-n\epsilon'}$,
\begin{equation}\label{maineqtree}
\begin{aligned}
&\P
\Big(\forall\,0\leq k\leq n,\,\,-\frac{1}{k+1}\Big(1-\frac{\gamma}{\E_{\mathcal{P}_{k}(n)}N_{\sigma}}\Big)
+\frac{(1-s)(\E_{\mathcal{P}_{k}(n)}N_{\sigma}-\gamma)}{k+1}
\leq\frac{X_{k+1,n}}{X_{k,n}}
\Big)\\
&\qquad
\geq 1-\frac{n}{\gamma}\Big(\frac{\epsilon_{1}}{c_{1}}+\epsilon_{2}\Big)
\geq
1-ne^{n\epsilon'}2e^{-s^{2}n(1-\alpha)^{2}e^{-\alpha/(1-\alpha)}/2}\\
&\qquad\qquad\cdot
e^{-n[-(2-3\alpha)\log(1-\alpha)+(1-2\alpha)\log(1-2\alpha)]}e^{t\log(1-\alpha/\beta)}.
\end{aligned}
\end{equation}

So, if we choose $s\colonequals\sqrt{2\frac{-(2-3\alpha)\log(1-\alpha)+(1-2\alpha)\log(1-2\alpha)}{(1-\alpha)^{2}e^{-\alpha/(1-\alpha)}}}$, then we can also take $\gamma\colonequals e^{-n\epsilon'}$ for some $\epsilon'>0$.  The term $-\frac{1}{k+1}\Big(1-\frac{\gamma}{\E_{\mathcal{P}_{k}(n)}N_{\sigma}}\Big)$ in \eqref{maineqtree} is negligible for fixed $\alpha$ and $n$ large since it is of the form $O((\alpha n)^{-1}(1-O(e^{-n\epsilon'}n^{-1})))$.


The left-most term in the quantity \eqref{maineqtree} gives unimodality when it is greater than $1$, i.e. when $\alpha$ satisfies
\begin{equation}\label{alphaeqb}
\begin{aligned}
&\frac{(1+o(1))(1-\alpha)^{2}e^{-\frac{\alpha}{1-\alpha}}}{\alpha}\\
&\qquad\cdot\Big(1-\sqrt{2\frac{-(2-3\alpha)\log(1-\alpha)+(1-2\alpha)\log(1-2\alpha)}{(1-\alpha)^{2}e^{-\alpha/(1-\alpha)}}}\Big)>1.
\end{aligned}
\end{equation}
This inequality holds when $\alpha<.26543$.  (Also $s<1$ when $\alpha<.4$.)
\end{proof}
\begin{remark}\label{nearlyrk}
We note the following in passing.  The right part of the quantity in \eqref{maineqtree} gives unimodality when it is less than $1$, i.e. when $\alpha$ satisfies
\begin{equation}\label{alphaeq}
\begin{aligned}
&\frac{(1+o(1))(1-\alpha)^{2}e^{-\frac{\alpha}{1-\alpha}}}{\alpha}\\
&\qquad\cdot\Big(1+\sqrt{2\frac{-(2-3\alpha)\log(1-\alpha)+(1-2\alpha)\log(1-2\alpha)}{(1-\alpha)^{2}e^{-\alpha/(1-\alpha)}}}\Big)<1.
\end{aligned}
\end{equation}

This inequality holds when $\alpha>.37824$.  This result nearly recovers the result of Levit-Mandrescu, Theorem \ref{levitthm}, which holds for all trees, for all $\alpha>.378095$.
\end{remark}

\section{Appendix: Matrix Tree Theorem}

\begin{lemma}[{\cite{bedrosian64}}]\label{lemma31}
The number of labelled trees on $n\geq k+1$ vertices where vertices $1,\ldots,k$ form an independent set is
$$\frac{1}{n}(n-k)^{k-1}n^{n-k}=(n-k)^{k-1}n^{n-k-1}.$$
\end{lemma}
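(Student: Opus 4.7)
The plan is to apply Kirchhoff's matrix-tree theorem to the graph $G = K_n \setminus K_k$, obtained from the complete graph on $\{1,\ldots,n\}$ by deleting all $\binom{k}{2}$ edges within the vertex subset $\{1,\ldots,k\}$. Labelled trees on $\{1,\ldots,n\}$ in which $\{1,\ldots,k\}$ is independent are exactly the spanning trees of $G$, so by the matrix-tree theorem their number equals $\frac{1}{n}$ times the product of the nonzero eigenvalues of the Laplacian $L$ of $G$.

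First I would write $L$ in $2\times 2$ block form with respect to the partition $\{1,\ldots,k\}\sqcup\{k+1,\ldots,n\}$: the top-left block is $(n-k)I_k$ (vertices $1,\ldots,k$ have degree $n-k$ and no edges among themselves), the bottom-right block is $nI_{n-k} - J_{n-k}$ (vertices $k+1,\ldots,n$ have degree $n-1$ and form a complete subgraph), and the two off-diagonal blocks are $-J$ matrices of the appropriate sizes recording that every cross-edge is present.

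Next I would diagonalize $L$ by symmetry. Vectors supported on the first block whose coordinates sum to zero are annihilated by the off-diagonal coupling $-J_{n-k,k}$, so they are eigenvectors with eigenvalue $n-k$, contributing multiplicity $k-1$. Symmetrically, vectors supported on the second block whose coordinates sum to zero give eigenvalue $n$ with multiplicity $n-k-1$. Finally, on the two-dimensional subspace spanned by $\mathbf{1}_k$ and $\mathbf{1}_{n-k}$, one computes the reduced action $\begin{pmatrix} n-k & -(n-k) \\ -k & k \end{pmatrix}$, whose trace is $n$ and determinant is $0$; this yields a further eigenvalue $0$ (the constant eigenvector $\mathbf{1}_n$, as required) and one more eigenvalue $n$. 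In total the spectrum is $0$ with multiplicity $1$, $n-k$ with multiplicity $k-1$, and $n$ with multiplicity $n-k$, accounting for all $n$ coordinates.

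Multiplying the nonzero eigenvalues and dividing by $n$ produces
\[
\frac{1}{n}(n-k)^{k-1}\,n^{n-k} \;=\; (n-k)^{k-1}\,n^{n-k-1},
\]
which is the claimed count. The only delicate step is the bookkeeping of multiplicities in the eigenspace decomposition; the constant-on-blocks subspace contributes two eigenvalues (one of which is the trivial $0$), and it is important to verify that $1 + (k-1) + (n-k-1) + 1 = n$ so that no eigenvector is double counted and the nonzero spectrum really is $(n-k)^{k-1}n^{n-k}$. Once that is checked, the formula follows immediately.
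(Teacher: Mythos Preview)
Your proposal is correct and follows essentially the same approach as the paper: identify the trees in question as spanning trees of $K_n\setminus K_k$, apply the matrix-tree theorem, and read off the Laplacian spectrum from the evident block structure to obtain $(n-k)^{k-1}n^{n-k}/n$. The only cosmetic difference is that the paper exhibits the eigenvalue-$n$ eigenvectors directly as $(1,\ldots,1\,|\,0,\ldots,-k,\ldots,0)$ rather than splitting them into the sum-zero-on-the-second-block subspace plus the constant-on-blocks contribution as you do; your decomposition is arguably tidier, but the content is the same.
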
%
\begin{proof}
Consider the adjacency matrix
\begin{table}[htbp!]
\centering
\begin{tabular}{ccccccccc|c|}
 &   &\multicolumn{3}{c}{$k$} &\multicolumn{4}{c}{$n-k$}\\
  &   &\multicolumn{3}{c}{$\overbrace{\hspace{3cm}}$} & \multicolumn{4}{c}{$\overbrace{\hspace{4cm}}$}\\
 \cline{3-9}
  & &  \multicolumn{1}{|c}{$n-k$} & $\cdots$ & \multicolumn{1}{c|}{$0$} & $-1$ & $-1$ & $\cdots$ &  $-1$ \\
 \multirow{ 2}{*}{\begin{rotate}{90} $k$ \end{rotate}} & \multirow{ 3}{*}{\begin{rotate}{90} $\overbrace{\hspace{1.2cm}}$  \end{rotate}}
 & \multicolumn{1}{|c}{$\vdots$} & $\ddots$ &\multicolumn{1}{c|}{$\vdots$} & $\vdots$ & $\vdots$ & $\cdots$ &  $\vdots$ \\
  & & \multicolumn{1}{|c}{$0$}&  $\cdots$ & \multicolumn{1}{c|}{$n-k$}& $-1$ & $-1$ & $\cdots$ & $-1$ \\
   \cline{3-9}
  & &  \multicolumn{1}{|c}{$-1$} &  $\cdots$ & \multicolumn{1}{c|}{$-1$}& $n-1$ & $-1$  &  $\cdots$ & $-1$  \\
\multirow{ 3}{*}{\begin{rotate}{90} $n-k$ \end{rotate}}  &   &  \multicolumn{1}{|c}{$-1$}&  $\cdots$ & \multicolumn{1}{c|}{$-1$} & $-1$  & $n-1$ &  $\cdots$ &  $-1$ \\
& \multirow{ 4}{*}{\begin{rotate}{90} $\overbrace{\hspace{2.3cm}}$  \end{rotate}}
&    \multicolumn{1}{|c}{$\vdots$} & $\cdots$ &\multicolumn{1}{c|}{$\vdots$} & $\vdots$ &  &  $\ddots$ &  $\vdots$ \\
  & &  \multicolumn{1}{|c}{$-1$} &  $\cdots$ & \multicolumn{1}{c|}{$-1$} & $-1$ & $\cdots$  & $-1$ & $n-1$ \\
\cline{3-9}
\end{tabular}
\label{table0}
\end{table}

A basis of eigenvectors exists including vectors of the form
$$(1,0,\ldots,-1,0,\ldots,0\,|\,0,\ldots,0),\qquad(1,\ldots,1\,|\,0\ldots,0,-k,0,\ldots,0).$$

The eigenvalues of these vectors are $n-k$ and $n$ respectively.  The first eigenvalue appears $k-1$ times and the second appears $n-k$ times.

So, the matrix-tree theorem implies that the number of labelled trees where vertices $1,\ldots,k$ form an independent set is
$$\frac{1}{n}(n-k)^{k-1}n^{n-k}=(n-k)^{k-1}n^{n-k-1}.$$
\end{proof}

\medskip
\noindent\textbf{Acknowledgement}.  Thanks to David Galvin for introducing us to this problem, and for suggesting an improvement to Lemma \ref{treelowerbound}, which then led to an improved constant in Theorem \ref{mainthm2}.  Thanks to Larry Goldstein for several helpful discussions.  Thanks to Richard Arratia for explaining to me the Joyal bijection along with several of its improvements.  Thanks to Tim Austin for emphasizing the importance of \cite{coja15} and informing us of the reference \cite{in12}.  Thanks also to Ken Alexander, Sarah Cannon, Will Perkins, and Tianyi Zheng.

\bibliographystyle{amsalpha}
\bibliography{12162011}

\end{document}